\renewcommand\eqref[1]{(\ref{#1})}
\newcommand\lie[1]{{\mathfrak{#1}}}
\newcommand\iso{{\ \cong\ }}
\newcommand\tensor{{\otimes}}
\newcommand\calO{{\mathcal O}}
\newtheorem{Theorem}{Theorem} 
\newtheorem{Proposition}{Proposition} 
\newtheorem{Lemma}{Lemma}
\newtheorem{Corollary}{Corollary}
\newtheorem*{Corollary*}{Corollary}
\newtheorem*{Theorem*}{Theorem}
\theoremstyle{remark}
\newtheorem{Example}{Example}
\newcommand\onto{\mathop{\twoheadrightarrow}}
\newcommand\into{\operatorname*{\hookrightarrow}}
\newcommand\calL{{\mathcal L}}
\newcommand\CC{{\mathbb C}}
\newcommand\junk[1]{}
\theoremstyle{plain}
\newcommand\wt[1]{{\widetilde #1}}
\newcommand{\todo}[2][0.9]{\vspace{1 mm}\par \noindent
 \framebox{{\color{red}\begin{minipage}[c]{#1
\textwidth} \tt #2 \end{minipage}}}\vspace{1 mm}\par}
\newcommand\AKrem[1]{{\color{teal} #1}}
\newcommand\acirc{{\mathring a}}
\newcommand\dcirc{{\mathring d}}
\newcommand\deltacirc{{\accentset{\circ} \delta}}
\newcommand\deltop{{\overset{\tiny\Leftcircle} \delta}}
\newcommand\Demopisobaric{\delta}
\newcommand\Demop{\deltacirc}
\newcommand\ea{e^\alpha}
\newcommand\ema{e^{-\alpha}}
\newcommand\da{\Demopisobaric_\alpha}
\newcommand\bda{\Demop_\alpha}
\newcommand\ra{r_\alpha}
\numberwithin{equation}{section}
\begin{document}

\author{Rebecca Goldin}
\address{ George Mason University, Fairfax, VA 22030, USA {\tt rgoldin@gmu.edu}}
\author{Allen Knutson}
\address{Cornell University, Ithaca, NY 14853, USA {\tt allenk@math.cornell.edu}}

\title{Schubert structure operators and $K_T^*(G/B)$}
\date{\today}

\dedicatory{In loving memory of our friend Bert Kostant}
\maketitle

\begin{abstract}
  We prove a formula for the structure constants of multiplication of
  equivariant Schubert classes in both equivariant cohomology
  and equivariant $K$-theory of Kac-Moody flag varieties $G/B$. 
  We introduce new operators whose coefficients compute
  these 
 (in a manifestly polynomial, but not
  positive, way), resulting in a formula much like and generalizing
  the positive Andersen-Jantzen-Soergel/Billey and Graham/Willems
  formul\ae\ for the restriction of classes to fixed points. 
  Our proof involves Bott-Samelson manifolds, and in particular, 
  the ($K$-)cohomology basis dual to the ($K$-)homology
  basis 
  consisting of classes of sub-Bott-Samelson manifolds.
\end{abstract}

{ \small\tableofcontents}
%
%

\newcommand\ZZ{{\mathbb Z}}

\newcommand\defn[1]{{\bf #1}}

\newcommand\com[1]{{\color{red} #1}}

\section{Introduction and the main theorems}

Fix a complex reductive (or even Kac-Moody) Lie group $G$ and 
maximal torus $T\leq G$, for example 
$G=GL_n(\CC)$ and $T$ the diagonal matrices.  
Fix opposed Borel subgroups $B,B_-$ with intersection $T$. This choice
results in a length function $\ell$ on $W = N(T)/T$ and a set
$\{\alpha_i\}$ of simple roots.  The quotient $G/B$ is the associated
\defn{flag variety}\footnote{Following \cite{KostantKumarH}, we are
  using the ``thin'' flag variety, namely the ind-scheme made as an
  inductive union of finite-dimensional $B$-orbit closures.}
and the left $T$-action on $G/B$ has isolated
fixed points $\{wB/B\ :\ w\in W\}$, where $W:= N(T)/T$ is the \defn{Weyl group}.

\junk{
{\color{blue} We probably don't need this paragraph, i.e. can assume the readers know a little more about Lie groups.} In the case that $G = GL_n(\CC)$ and $B =$ upper-triangular matrices,
$G/B$ is (uniquely) $G$-isomorphic to the set of complete flag
manifolds $Fl(\CC^n)$. The fixed points $N(T)B/B$ of the $T$-action
correspond, under that isomorphism, to coordinate flags in
$Fl(\CC^n)$. In particular, there are $n!$ such flags, corresponding
to elements in the Weyl group $W \cong S_n,$ the symmetric group on
$n$ letters. 
}

We denote by $H^*_T$ the $T$-equivariant cohomology of a point with
coefficients in $\ZZ$, and recall that $H^*_T$ is the polynomial ring
$Sym(T^*)$ over $\ZZ$ in the weight lattice $T^* := Hom(T,\CC^\times)$.
The equivariant cohomology $H_T^*(G/B)$ 
is a free $H^*_T$-module with a basis given by Schubert classes 
(recalled below). Our references for equivariant (co)homology of $G/B$ are
\cite{BrionH,KostantKumarH,KumarNori}. 
Similarly we let $K_T^*$ denote the $T$-equivariant $K$-theory of a point,
a Laurent polynomial ring of characters of $T$-reps
\cite{KostantKumarK}. To best compare/distinguish the two, we write a
typical element of $K_T^*$ as a finite sum $\sum_{\lambda\in T^*} m_\lambda e^\lambda$.

\junk{
{\color{blue} We denote by $K_T^*$ the equivariant $K$-theory... some words about which we are using... intrdocutory remarks.}
}

\subsection{The usual operators, 
  following \cite{KostantKumarH,KostantKumarK}}
\label{ssec:usual}

Let $\ZZ[\partial]$
denote the \defn{nil Hecke algebra} 
with $\ZZ$-basis $\{\partial_w\ :\ w\in W\}$, whose products are
defined by 
$$
\partial_w \partial_{r_\alpha} 
:= \begin{cases}\partial_{w r_\alpha} &\mbox{if $\ell(wr_\alpha)=\ell(w)+1$}\\
0 & \mbox{otherwise, i.e. if $\ell(wr_\alpha) = \ell(w)-1$}
\end{cases}
$$
for simple reflections $r_\alpha$.
These $\{\partial_w\}$ act on the polynomial ring $H^*_T$ 
as follows: for each simple root $\alpha$ with 
reflection $r_\alpha$, the \defn{divided difference operator}
$\partial_{r_\alpha} := \partial_\alpha$ is defined by
$$ \partial_\alpha \cdot f := \frac{f-r_\alpha f}{\alpha}. $$  
The nil Hecke algebra acts on the first factor in the tensor product
$H^*_T \tensor_{\ZZ} H^*_T$, and this action descends to the
quotient $H^*_T \ \tensor_{(H^*_T)^W}\ H^*_T$.
This latter ring has a well-defined map 
$\lambda \tensor \mu \mapsto \lambda c_1(\calL_{\mu})
\in H_T^*(G/B)$
called the \defn{equivariant Borel presentation} of $H^*_T(G/B)$,
which is a rational (and for $G=GL_n$, an integral) isomorphism.
(Here $\calL_\mu$ is the Borel-Weil line bundle $G \times^B \CC_\mu$,
where $\CC_\mu$ is the $1$-dimensional representation of $B$, neither of which
will we be using again.)

Similarly, we define the \defn{zero Hecke algebra} $\ZZ[\deltop]$
with $\ZZ$-basis $\{\deltop_w\ :\ w\in W\}$, whose \defn{Demazure products} 
are defined by
$$
\deltop_w \deltop_{r_\alpha} 
:= \begin{cases}\deltop_{w r_\alpha} &\mbox{if $\ell(wr_\alpha)=\ell(w)+1$}\\
\deltop_w & \mbox{otherwise, i.e. if $\ell(wr_\alpha) = \ell(w)-1$}
\end{cases}
$$
and which has {\em two} actions on $K_T^*$, 
by two flavors of \defn{Demazure operators}
$$ \Demop_\alpha \cdot f := \frac{f - r_\alpha\cdot f}{1-e^{-\alpha}}
\qquad\qquad
\Demopisobaric_\alpha \cdot f := \frac{f - e^{-\alpha}r_\alpha\cdot f}{1-e^{-\alpha}} $$
As algebras we have $\ZZ[\Demop] \iso \ZZ[\Demopisobaric] \iso \ZZ[\deltop]$,
but occasionally we will use the first two when anticipating their 
action somewhere.



\subsection{The bases of ($K$-)(co)homology}

Define $X^v := \overline{BvB}/B \subset G/B$ 
with equivariant homology class $[X^v] \in H^T_*(G/B)$ (our reference for
equivariant homology being \cite{BrionH}).\footnote{For the entirety of this paper, we use $\subset$ to denote inclusion, and allow for equality.} As these $\{[X^v]\}$ form an $H^*_T$-basis and
$G/B$ enjoys equivariant Poincar\'e duality,
we can define the dual basis $\{S_w \in H^*_T(G/B)\}$ 
of \defn{Schubert classes} by $\langle S_w, [X^v] \rangle = \delta_{wv}$.
Here $\langle,\rangle$ denotes the Alexander pairing, of (equivariant)
cap-product followed by pushforward to a point.
In fact $S_w$ is the Poincar\'e dual to the
(finite-codimensional) subvariety $\overline{B_- w B}/B$.
We don't strictly need to decide which of $\{X^v\},\{X_v\}$ need be called
Schubert vs. opposite Schubert varieties, but of necessity the
finite-dimensional varieties $X^v$ define homology classes and
the finite-codimensional varieties $X_v$ define cohomology classes.

In $K$-homology
\junk{Following at least some of the literature,
  we use $K()$ without a $*$ to denote $K$-cohomology, but for clarity
  we will need it in $K_*$ to denote $K$-homology.}
$K_*(G/B)$, unlike $H_*(G/B)$, there are {\em two}\footnote{%
  There is a wholly separate issue in the finite-dimensional case that
  $H_*^T(G/B)$ has two natural bases, $\{[X^w]\}$ and
  $\{w_0\circ[X^w]\}$, that coincide once one passes to nonequivariant
  homology. With that in mind, $K^T_*(G/B)$ has {\em four} natural
  bases that nonequivariantly become the two we're discussing here.}
natural bases: one is the basis of structure sheaves $\{\calO_{X^w}\}$
coming from functions on $X^w$, the other being the basis of ideal
sheaves $\{I_{X^w}\}$ coming from functions on $X^w$ that vanish on
its ``boundary'' $\cup_{w' < w} X^{w'}$. Each basis has an
evident equivariant extension to a $K_T^*$-basis of $K^T_*(G/B)$, and
the change-of-basis matrices are well known (and very simple even
equivariantly):
$$ [\calO_{X^w}] = \sum_{w'\leq w} [I_{X^{w'}}], \qquad\qquad
[I_{X^w}] = \sum_{w'\leq w} (-1)^{\ell(w)-\ell(w')} [\calO_{X^{w'}}] $$
These two bases of $K^T_*(G/B)$ are indistinguishable at the coarser
level of homology, owing to the short exact sequence
$0 \to I_{X^w} \into \calO_{X^w} \onto \calO_{\cup_{w' < w} X^{w'}} \to 0$
whose cokernel has lower-dimensional support.

To the bases $\{[X^w]\}$, $\{[\calO_{X^w}]\}$, $\{[I_{X^w}]\}$ 
in ($K$-)homology,
we respectively associate the dual bases (under the Alexander pairings) 
$\{S_w\}$, $\{\xi_w^\circ\}$, and $\{\xi_w\}$ in ($K$-)cohomology.
As in $H^*_T(G/B)$ the classes $\xi^\circ_w,\xi_w \in K_T^*(G/B)$ 
have geometric interpretations, being the Poincar\'e dual classes associated 
to the $K^T_*$-homology classes $[I_{X_w}]$, $[\calO_{X_w}]$ respectively.

\junk{
  Let $S_v \in H_T^*(G/B)$ denote the Poincar\'e dual to 
  $X_v := \overline{B_- v B}/B$, and similarly let $S^v\in H_T^*(G/B)$
  denote the dual to $X^v := \overline{B v B}/B$. 
In particular, $S^{w_0}= S_1 =1$, where $w_0$
is the longest word of $W$. We call $\{S^v\}_{v\in W}$ the 
\defn{Schubert classes} and $\{S_v\}_{v\in W}$ the 
\defn{opposite Schubert classes}. 
}

The nil Hecke algebra $\ZZ[\partial]$ acts simply in 
the basis $\{S_v\}_{v\in W}$: in particular,
in the case $G$ finite-dimensional we have
$\partial_w \cdot S_{w_0} = S_{w w_0 }$ for each $w\in W$ 
(since we act on the left factor in the Borel presentation),
with similar statement
$\Demopisobaric_w\cdot \xi_{w_0} = \xi_{w w_0 }$ in $K$-theory.
The geometric meaning of the $\Demop$ operators is much less clear.
%
\junk{The bases $\{\xi_w\}$, $\{w_0\cdot \xi^\circ_w\}$ are dual bases
  under the $K_T^*$-valued pairing
  $K_T^*(G/B)^{\tensor 2} \xrightarrow{\cup} K_T^*(G/B)^{\tensor 2} 
  \xrightarrow{\int} K_T^*$. Consequently, the operators $\Demop_w$,
  $w_0 \Demopisobaric_w^\circ w_0$ are adjoint.}



\subsection{Multiplying in these bases of ($K$-)cohomology}
\label{ssec:mult}

The structure constants $c_{uv}^w\in H_T^*$ of multiplication
are defined by the relation in $H^*_T(G/B)$
\begin{equation}\label{eq:structureconstants}
S_u S_v = \sum_w c_{uv}^w S_w
\end{equation}
These polynomials $c_{uv}^w$ are known to be positive
in the following sense 
\cite{Graham:Positivity}: 
when written (uniquely) as a sum of
monomials in the simple roots $\{\alpha_i\}$, 
each monomial has a non-negative coefficient.

Similarly, in $K_T^*(G/B)$, the products
\begin{equation}\label{eq:Kcoefficientsdefinition}
 \xi_w \xi_v = \sum_u a_{wv}^u\, \xi_u
\qquad\qquad
 \xi^\circ_w \xi^\circ_v = \sum_u \acirc_{wv}^u\, \xi^\circ_u
\end{equation}
define classes $a_{wv}^u, \acirc_{wv}^u \in K_T^*$,
with subtler positivity properties proven in \cite{AGM}.\footnote{In \cite{GrahamKumar}, the authors denote the coefficients $\acirc_{wv}^u$ by $p_{w,v}^u$, and the classes $\xi^\circ_u$ by $\xi^u_B$.}

It is a very famous problem to compute these in a manifestly positive way,
only solved in special cases such as $u,v\in W^P$ where $G/P$ is a
Grassmannian or $2$-step flag manifold \cite{KT,Buch,KZJ17},
or when $W$ is a free 
Coxeter group \cite[Theorem 2.5]{BerensteinRichmond}.
Another solved case is $u=w$, in which case $c_{wv}^w$ is computed
positively by the AJS/Billey formula \cite{AJS,Billey} 
(recalled below)
for the point restrictions $S_w|_v = c_{wv}^v$ of Schubert classes,
$\acirc_{vw}^w$ similarly in \cite{Graham,WillemsH}, and
$a_{vw}^w$ in \cite[Theorem 3.12]{Graham}
(see also \cite{LenartZainoulline}).
In this paper, we prove formul\ae\ for the $\{c_{uv}^w, a_{uv}^w, \acirc_{uv}^w\}$
in terms of certain compositions of operators in the nil/zero Hecke algebras, 
applied to $1$.
Along the way, we reprove the AJS/Billey and Graham/Willems formul\ae;
more specifically, our nonpositive formul\ae\ reduce to those
positive formul\ae\ in the special case $u=w$.

Given a word $R$ in $G$'s simple reflections, let $\prod R$ denote
its ordinary product and $\wt\prod R$ its Demazure product.
Given a true or false statement $\tau$, let $[\tau] = 1$ if true, $0$ if false.

\begin{Theorem}\label{thm:main}
  Let $Q$ be a reduced word with product $w$. Then
  $$ c_{uv}^w = 
  \sum_{R,S\subset Q \text{ reduced}\atop \prod R = u, \ \prod S = v}
  \prod_Q \left(
    {\alpha_q}^{\ [q \in R \cap S]} \
    { \partial_q}^{\ [q \notin R \cup S]} \
    r_q \right) \cdot 1 
  =
  \sum_{R,S\subset Q \text{ reduced}\atop \prod R = u, \ \prod S = v}
  \prod_Q \left(
    {\alpha_q}^{\ [q \in R \cap S]} \
    r_q \ 
    {(- \partial_q)}^{\ [q \notin R \cup S]}
    \right) \cdot 1 $$
  where the exponent 
  ``$[\sigma]$'' is $1$ if the statement $\sigma$ is true, $0$ if false. 
  Similarly, for $Q$ any word whose Demazure product $\wt\prod Q$ is $w$, 
  the $K$-theoretic structure constants are computable by
  \begin{eqnarray*}
    a_{uv}^w &=& (-1)^{\ell(u)+\ell(v)-\ell(w)} \\
    && \sum_{R,S\subset Q\atop \wt\prod R = u, \ \wt\prod S = v}
       (-1)^{|Q|-|R|-|S|}  \left( \prod_{q\in Q} 
       (e^{+\alpha_q})^{[q\notin {R\cup S}]}  (1-e^{-\alpha_q})^{[q\in R\cap S]} \,
       r_q (-\Demop_q)^{[q\notin {R}\cup {S}]}\right) \cdot 1 \\
    \acirc_{uv}^w 
    &=& \sum_{R,S\subset Q\atop \wt\prod R = u, \ \wt\prod S = v}
        \phantom{(-1)^{|Q|-|R|-|S}}
        \left( \prod_{q\in Q} {(e^{-\alpha_q})^{ [q\notin {R\cap S}] }}
        {(1-e^{-\alpha_q})}^{[q\in R \cap S]} \, r_q 
        (-\Demopisobaric_q)^{[q\notin  R \cup S]} \right) \cdot 1. 
  \end{eqnarray*}
\end{Theorem}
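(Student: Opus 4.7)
The plan is to reduce to computations on a Bott--Samelson manifold. Fix a reduced word $Q$ with $\prod Q = w$, and let $\pi: \mathrm{BS}_Q \to G/B$ be the associated Bott--Samelson map, which resolves $X^w$ birationally. Since $\pi_*[\mathrm{BS}_Q] = [X^w]$, the structure constant satisfies
$$ c_{uv}^w \;=\; \int_{\mathrm{BS}_Q} \pi^* S_u \cdot \pi^* S_v, $$
and an analogous statement holds in $K$-theory (with $[\calO_{\mathrm{BS}_Q}]$ pushing forward to $[\calO_{X^w}]$ for the $\xi^\circ$-basis, and a parallel statement yielding $[I_{X^w}]$ for the $\xi$-basis). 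Thus it suffices to compute each pulled-back Schubert class in a convenient basis of $H^*_T(\mathrm{BS}_Q)$ and integrate.

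The natural basis, promised by the abstract, is the ($K$-)cohomology basis $\{\sigma^P\}_{P\subset Q}$ dual under the Alexander pairing on $\mathrm{BS}_Q$ to the ($K$-)homology basis $\{[\mathrm{BS}_P]\}_{P\subset Q}$ of sub-Bott--Samelson classes. Since $\mathrm{BS}_Q$ is an iterated $\PP^1$-bundle $\mathrm{BS}_{(q_1,\ldots,q_k)} \to \mathrm{BS}_{(q_1,\ldots,q_{k-1})}$, one builds $\{\sigma^P\}$ recursively: a class on the base lifts in two natural ways (pullback versus a ``section'' class), corresponding exactly to the two choices ``$q_k\in P$'' and ``$q_k\notin P$''.

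The core technical step is then an inductive formula expressing $\pi^* S_v$ in this dual basis as a sum over reduced subwords $R\subset Q$ with $\prod R = v$, whose coefficients are ordered products along $Q$ of $r_q$-factors (for $q\in R$) and $\partial_q$-factors (for $q\notin R$) applied to $1$. Multiplying $\pi^* S_u \cdot \pi^* S_v$ then produces a double sum indexed by pairs $(P,R)$. The factors $\alpha_q^{[q\in P\cap R]}$ appear from the multiplicative relation among the $\sigma^P$ at letters where $P$ and $R$ select the same side of a $\PP^1$-fiber, which forces a restriction and contributes the character of the normal direction. Integrating against $[\mathrm{BS}_Q]$ via Gysin pushforward fiber-by-fiber then cumulatively produces the operator product $\prod_Q(\cdots r_q\cdots)\cdot 1$ of the theorem.

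The main obstacle will be matching the Leibniz rule $\partial_q(fg)=(\partial_q f)g + (r_q f)(\partial_q g)$ (and its $\Demop_q,\Demopisobaric_q$ analogues) to the dual-basis multiplication: this is precisely what forces an $r_q$ to appear in every factor, whether or not $q$ lies in either subword, and the two equivalent cohomological forms reflect the two possible orderings (factoring out $r_q$ on the left versus on the right) in Leibniz. In $K$-theory, the extra factors $e^{\pm\alpha_q}$ and the appearance of $(1-e^{-\alpha_q})$ in the role of $\alpha_q$ come from the deformed Leibniz rules for $\Demop_q$ and $\Demopisobaric_q$ and from the short exact sequence relating $\calO_{X^w}$ and $I_{X^w}$; this is also why $Q$ need only have Demazure product $w$ rather than be reduced in the $K$-theoretic formulas, since non-reduced letters are absorbed by the Demazure product $\wt\prod$ and compensated by explicit $e^{\pm\alpha_q}$ bookkeeping.
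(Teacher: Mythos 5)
Your broad strategy is the same as the paper's: pull back along $\pi_Q:BS^Q\to G/B$, expand in the basis dual to the sub-Bott-Samelson ($K$-)homology classes, and integrate against $[BS^Q]$. But the account of the "core technical step" contains a genuine error. You write that one should express $\pi^*S_v$ in the dual basis $\{\sigma^P\}$ (the paper's $\{T_P\}$) with coefficients that are "ordered products along $Q$ of $r_q$-factors and $\partial_q$-factors applied to $1$." In fact the pullback formula is essentially tautological and has coefficients equal to $1$:
$$\pi_Q^*(S_v)=\sum_{\substack{R\subset Q\text{ reduced}\\ \prod R=v}} T_R,$$
which follows directly from $\langle\pi_Q^*S_v,[BS^R]\rangle=\langle S_v,(\pi_Q)_*[BS^R]\rangle$. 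The formula you attribute to the pullback is actually (close to) the formula for the \emph{point restriction} $T_J|_L$ of a dual-basis element (Willems' restriction formula), which is a separate thing.

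The operators in the theorem arise from the \emph{multiplicative structure constants} $b_{PR}^Q$ of $H^*_T(BS^Q)$ in the dual basis, defined by $T_P T_R=\sum_J b_{PR}^J T_J$: pulling back the product and matching the $T_Q$-coefficient gives $c_{uv}^w=\sum_{P,R} b_{PR}^Q$, and the whole content of the theorem is then an explicit formula for $b_{PR}^Q$. Your proposal gestures at a fiber-by-fiber Gysin-pushforward computation of these structure constants, which is in the right spirit, but it substantially understates the difficulty: the paper proves the $b_{PR}^Q$ (and the $K$-theoretic $d,\dcirc$) formula by an induction on the first letter of $J$, which rests on the explicit fixed-point restriction formulas for $T_J|_L$, $\tau_J|_L$, $\tau^\circ_J|_L$ and on a vanishing lemma $b_{PR}^J=0$ unless $J\supseteq P,R$, both established via Atiyah-Bott/Woods Hole localization. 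These inputs are not derivable from the "two ways to lift a class across a $\PP^1$-bundle" heuristic alone; in particular the $K$-theoretic cases require careful bookkeeping of the two dual bases and the sign $(-1)^{|R|-\ell(w)}$ from $\pi_*[I_{BS^R}]$, which is itself a nontrivial subword-complex computation. So the framing is correct but the claimed expansion of $\pi^*S_v$ is wrong, and the actual mechanism producing the operator formula (localization plus induction on structure constants) is missing from the proposal.
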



We can recover $H_T$ as the associated graded of $K_T^*$ with respect to
the $\langle \{1 - e^\lambda\ :\ \lambda\in T^*\} \rangle$-adic filtration.
(In practice this means replacing each $1 - e^{-\beta}$ by $\beta$,
and any remaining $e^\beta$ factors by $1$, then finally throwing away
any lower-degree terms.
Geometrically, passage to the associated graded ring corresponds to 
degenerating $T \iso {\rm Spec}\ K_T^*$ to 
the normal cone $\lie{t} \iso {\rm Spec}\ H_T$ at the identity element,
i.e. the group to its Lie algebra.)
This visibly takes $a_{uv}^w, \acirc_{uv}^w \mapsto c_{uv}^w$, 
at least to the second formula for $c_{uv}^w$, 
which is why we included that version despite
being uglier than the first. The $H^*_T$ commutation relation
$\partial_\alpha r_\alpha = - r_\alpha \partial_\alpha$ becomes
more complicated in $K_T^*$-theory,
$\Demopisobaric_\alpha  r_\alpha = -e^\alpha r_\alpha \Demopisobaric_\alpha + (1 + e^\alpha)$,
obstructing our discovery of a formula for $a_{uv}^w$ with the $r$s
in the same place as in our first $c_{uv}^w$ formula.

{\em Example.}
Let $Q = 1\,2\,1$ so $w = r_1 r_2 r_1$, $u = r_1$, $v = r_1 r_2$ all in 
$S_3$ the Weyl group of $GL_3$. Then $R \in \{1--,--1\}$, $S = 1\,2\, -$
as subwords of $1\,2\,1$, in our sum
$$
  c_{r_1,\ r_1r_2}^{r_1r_2r_1} 
  = (\alpha_1 r_1\  r_2\  \partial_1 r_1)\cdot 1 + (r_1\  r_2\  r_1) \cdot 1 
  = 0 + 1
$$
whereas if we change $v$ to $r_2r_1$ so $S = -\,2\,1$, then
$$
  c_{r_1,\ r_2r_1}^{r_1r_2r_1} 
  = (r_1\  r_2\  r_1) \cdot 1 + (\partial_1 r_1\  r_2\  \alpha_1 r_1)\cdot 1 
  = 1 + \partial_1\cdot \alpha_2 = 0.
$$

{\em Example.}
Let $Q = 1\,2\,3\,1\,2$, so $w = r_1 r_2 r_3 r_1 r_2 =[3421]$ in one-line
notation, and take $u=r_2r_3r_2=[1432],\ v=r_1r_2r_1=[3214]$. 
Then $R = -\,2\,3-2$ and $S\in \{1\,2-1-,-2-1\,2\}$ so we have
\begin{align*}
c_{uv}^w &= 
(r_1\  \alpha_2 r_2\   r_3\  r_1\  r_2 
+ \partial_1r_1\  \alpha_2 r_2\   r_3\  r_1\ \alpha_2 r_2 )
\cdot 1\\
&= (\alpha_1+\alpha_2)\cdot 1 + \partial_1(\alpha_1+\alpha_2)(\alpha_2+\alpha_3)\cdot 1\\
& =  \alpha_1+\alpha_2 + \partial_1 (\alpha_1+\alpha_2)\alpha_2 \cdot 1 + \partial_1 (\alpha_1+\alpha_2)\alpha_3\cdot 1\\
&=\alpha_1+\alpha_2 +0+ \alpha_3.
\end{align*}
For the $K$-theory formul\ae\ we have three possible subwords
$S \in \{1\,2-1-,-2-1\,2, 1\, 2-1\, 2\}$, so 
\begin{eqnarray*}
  a_{uv}^w &=& \left(
\begin{array}{rccc ccccc cc}
   &&              r_1            & (1-e^{-\alpha_2}) r_2& r_3& r_1& r_2 &\\
   &+& e^{\alpha_1} r_1 (-\Demop_1)& (1-e^{-\alpha_2}) r_2& r_3& r_1& 
                                       (1-e^{-\alpha_2}) r_2 &\\
  &-&                  r_1   &  (1-e^{-\alpha_2}) r_2& r_3& r_1& (1-e^{-\alpha_2}) r_2 &&
\end{array}\right)      \cdot 1 \\
  &=& (1-e^{-\alpha_1-\alpha_2})
      + (e^{-\alpha_2} (1 - e^{-\alpha_3}))
      - (1-e^{-\alpha_1-\alpha_2})    (1-e^{-\alpha_2-\alpha_3})  \\
  &=& e^{-\alpha_2}(1 - e^{-\alpha_1-\alpha_2-\alpha_3})
\end{eqnarray*}
%
%
which $\langle \{1-e^\lambda\ :\ \lambda\in T^*\}\rangle$-adically
associated-grades, as it must, to the $c_{uv}^w$ computed above.

We now recall the AJS/Billey formula. The $T$-invariant inclusion $i$
of $T$-fixed points into $G/B$ results in a map in equivariant cohomology:
\begin{equation}\label{eq:fixedpointinclusion}
  i^*: H^*_T(G/B) 
  \longrightarrow \bigoplus_{w \in W} H_T^*(wB/B) \cong \bigoplus_{w \in W} H_T^*
\end{equation}
and $i^*$ is well known to be an {\em injection.}  The inclusion
$i_w: wB/B\hookrightarrow G/B$ induces the projection to the $w$-term
in this sum, so we may write $i^* = \oplus_{w\in W}\ i^*_w$.


\junk{
  Restricting \eqref{eq:structureconstants} to $u$,
  and using the upper triangularity once again,
  $$
  S_u|_u S_v|_u  = (S_u S_v)|_u = \sum_{w\geq u,v} c_{uv}^w S_w|_u 
  = \sum_{w = u} c_{uv}^w S_w|_u  = c_{uv}^u S_u|_u
  \qquad \therefore c_{uv}^u = S_v|_u. $$
} 

For any $v, w \in W$, the 
\defn{point restriction} $S_v|_w\in H_T^*$  is defined by $i_w^*(S_v)$, 
i.e. the image of $S_v$ under the map $i^*$ in \eqref{eq:fixedpointinclusion}, 
projected to the $w$ summand. 
Since  \eqref{eq:fixedpointinclusion} is an inclusion,  
each Schubert class $S_v$ is described fully by the list
$\{i_w^*(S_v)\ :\ w\in W\}$ of these restrictions.
Note that $S_w|_u \neq 0$ implies $uB/B \in \overline{B_- wB}/B$, i.e. 
$u \geq w$ in \defn{Bruhat order}, and in fact the converse is also true.
This ``upper triangularity of the support'' will be useful just below.

In the case $u=w$, the relation \eqref{eq:structureconstants} and this
upper triangularity imply that $c_{uv}^w = S_v|_w$. After choosing $Q$
a reduced word for $w$, the only choice of reduced word $R$ for $u$ is
$Q$ itself.  The formula in Theorem \ref{thm:main} for
$a_{wv}^w$ thus simplifies to
$$ S_v|_w = 
  \sum_{R\subset Q \text{ reduced}\atop 
    \prod R = v}
  \prod_Q \left(    { \alpha_q}^{[q \in R]} \    r_q \right) \cdot 1, $$
which, as we explain at the beginning of \S \ref{sec:ajsb},
is just a restatement of the AJS/Billey formula \cite{AJS,Billey}.

The corresponding $K$-theoretic special cases are
\begin{eqnarray*}
  a_{wv}^w = \xi_v|_w &=& 
\sum_{R\subset Q\atop \wt\prod R = v} (-1)^{|R|-\ell(v)}
  \left( \prod_{q\in Q} 
  {(1-e^{-\alpha_q})}^{[q\in R]} \, r_q \, \right) \cdot 1
\\
  \acirc_{wv}^w = \xi^\circ_v|_w 
  &=& \sum_{R\subset Q\atop \wt\prod R = v}
    \prod_{q\in Q} \left(
  (e^{-\alpha_q})^{[q\notin { R}]}  (1-e^{-\alpha_q})^{[q\in R]}  r_q\right) \cdot 1
\end{eqnarray*}
where the first matches \cite[Theorem 3.12]{Graham} and the second
matches \cite[Theorem 3.7]{Graham}, \cite[Th\'eor\`eme 4.7]{WillemsK1}.

\junk{\color{blue}
\begin{eqnarray*}
  a_{wv}^w = \xi_u|_w &=& 
\sum_{R\subset Q\atop \wt\prod R = v}
  \prod_{q\in Q} \left(
  (e^{-\alpha_q})^{[q\notin { R}]}  (1-e^{-\alpha_q})^{[q\in R]}  r_q\right) \cdot 1
\\
  \acirc_{wv}^w = \xi^\circ_u|_w 
  &=& \sum_{R\subset Q\atop \wt\prod R = v}
  \left( \prod_{q\in Q} 
  {(1-e^{-\alpha_q})}^{[q\in R]} \, r_q \, \right) \cdot 1. 
\end{eqnarray*}
}

As an application of Theorem \ref{thm:main}, 
we derive in \S \ref{sec:recursive} a recursive formula for
cohomological structure constants.

\subsection{Properties of the associated operators}

After describing in \S \ref{sec:ingredients} our geometric proof of
Theorem \ref{thm:main}, we give an algebraic interpretation of our
$c_{uv}^w$ formula
as a coefficient of the product of certain ``Schubert structure operators''.  
Let  $H_T^*[\partial]$ denote the smash product of $H_T^*$ with $\ZZ[\partial]$, the algebra consisting of the free $H_T^*$-module 
$H_T^*\otimes_\ZZ \ZZ[\partial]$ with product given by, for $p, q\in H_T^*$,
$$
(p \otimes \partial_v) \cdot (q \otimes \partial_w) = p (\partial_v q) \otimes \partial_v\partial_w
$$
and extended additively. This smash product was used 
by Kostant and Kumar in \cite{KostantKumarH}.
Since $r_\alpha$ acts on $H_T^*(G/B)$ equivalently to $1-\alpha \partial_\alpha$, 
we will abuse notation and denote by $r_\alpha\in H_T^*[\partial]$ the
operator $1-\alpha \partial_\alpha$.
The two actions from \S\ref{ssec:usual} of $\deltop$ on $K_T^*$
lead to two different smash algebras $K_T^*[\Demopisobaric]$ and $K_T^*[\Demop]$,
within which we model $r_\alpha$ 
by $e^\alpha(1 - (1-e^{-\alpha})\Demopisobaric_\alpha)$ or by
$1 - (1-e^{-\alpha})\Demop_\alpha$, respectively.

\begin{Theorem}\label{thm:main2}
Define operators
 $L^\alpha \in H_T^*[\partial] \otimes\ZZ[\partial] \otimes \ZZ[\partial]$,
$\Lambda^\alpha \in K_T^*[\Demop] \otimes \ZZ[\Demop] \otimes \ZZ[\Demop]$,
and \break $\Lambda_\circ^\alpha \in K_T^*[\Demopisobaric] 
\otimes \ZZ[\Demopisobaric] \otimes \ZZ[\Demopisobaric]$ by
$$
\begin{array}{rrrrrrrrr}
  L^\alpha 
  &:=& \partial_\alpha { r_\alpha} \tensor 1 \tensor 1
  &+& r_\alpha \tensor \partial_\alpha \tensor 1
  &+& r_\alpha \tensor 1 \tensor \partial_\alpha 
  &+& \alpha r_\alpha \tensor \partial_\alpha \tensor \partial_\alpha \\
  \Lambda^\alpha
  &:=& (-e^{\alpha}) r_\alpha (-\Demop_\alpha) \tensor 1 \tensor 1
  &+& \ra \tensor \bda \tensor 1  &+& \ra \tensor 1 \tensor \bda
  &-& (1-\ema) \ra \tensor \bda \tensor \bda \\
  \Lambda^\alpha_\circ
  &:=& \ema \ra (-\da)  \tensor 1 \tensor 1
  &+& \ema \ra \tensor \da \tensor 1
  &+& \ema \ra \tensor 1 \tensor \da
  &+& (1-\ema) \ra \tensor \da \tensor \da
\end{array}
  $$
  These \defn{Schubert structure operators} 
  $L^\alpha,\Lambda^\alpha,\Lambda_\circ^\alpha$  
  square to $0,\Lambda^\alpha,\Lambda^\alpha_\circ$ respectively, 
  and obviously commute for orthogonal roots. We prove in \S\ref{se:structureops} that $L^\alpha$ satisfy the braid relation in the simply laced case. By computer we checked
  that $\Lambda^w$ and $\Lambda^w_\circ$ satisfy the appropriate simply-laced braid relation (and
  also checked the doubly-laced relation for $L^\alpha$).

  We may therefore define
  $$
  L^w := \prod_{q\in Q} L^{\alpha_q} \qquad
  \Lambda^w := \prod_{q\in Q} \Lambda^{\alpha_q} \qquad \mbox{and} \qquad
  \Lambda^w_\circ := \prod_{q\in Q} \Lambda^\alpha_\circ
  $$
  for any reduced word $Q$ for $w$ (for $W$ simply laced,
  or even doubly laced in the $L^w$ case).
  In the $\Lambda^w,\Lambda^w_\circ$ cases, it suffices that $Q$
  have Demazure product $w$.
\end{Theorem}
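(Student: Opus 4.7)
The plan is to verify each asserted property in turn and close with Matsumoto's theorem.

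First the squaring identities. I would establish the basic commutation rules in the smash product: in $H_T^*[\partial]$ one has $\partial_\alpha^2 = 0$, $r_\alpha^2 = 1$, and (via the Leibniz rule $\partial_\alpha(fg) = (\partial_\alpha f)g + (r_\alpha f)(\partial_\alpha g)$ together with $r_\alpha = 1 - \alpha\partial_\alpha$) the two cross identities $\partial_\alpha r_\alpha = -\partial_\alpha$ and $r_\alpha \partial_\alpha = \partial_\alpha$; the $K$-theoretic analogues in $K_T[\deltop]$ hold with $\Demop_\alpha^2 = \Demop_\alpha$ and $\Demopisobaric_\alpha^2 = \Demopisobaric_\alpha$ replacing the nilpotency. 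With these, expanding $(L^\alpha)^2$ in the triple-tensor algebra produces sixteen terms, nine of which vanish immediately by $\partial_\alpha^2 = 0$; the $d\tensor 1$ and $1\tensor d$ rows cancel using $\partial_\alpha r_\alpha + r_\alpha\partial_\alpha = 0$, and writing $a=-\partial_\alpha,\ b=r_\alpha,\ c=\alpha r_\alpha$, the final $\partial\tensor\partial$ row collapses to $ac + 2b^2 + ca = (-r_\alpha-1) + 2 + (r_\alpha-1) = 0$. The squaring of $\Lambda^\alpha$ and $\Lambda^\alpha_\circ$ proceeds along the same template, with the extra $e^{\pm\alpha}$ factors tracked through the Demazure Leibniz rule, and each time the result reassembles into the original four-term expression.

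Next, orthogonality. For $\alpha \perp \beta$, $r_\alpha$ fixes $\beta$ and each of $r_\alpha, \partial_\alpha, \Demop_\alpha, \Demopisobaric_\alpha$ commutes with its $\beta$-counterpart in the smash product, so factorwise multiplication immediately gives $L^\alpha L^\beta = L^\beta L^\alpha$ and the analogous identities for $\Lambda^\alpha$ and $\Lambda^\alpha_\circ$.

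The braid relations are the main obstacle. For $L^\alpha$ in the simply-laced case I defer to the conceptual argument in \S\ref{se:structureops}; a head-on computation would require normal-ordering $r_\alpha r_\beta r_\alpha$-type products through the smash product, producing an unwieldy number of terms. For $\Lambda^\alpha$ and $\Lambda^\alpha_\circ$ the situation is genuinely harder because the replacement for $\partial_\alpha r_\alpha = -\partial_\alpha$ becomes the inhomogeneous identity $\Demopisobaric_\alpha r_\alpha = -e^\alpha r_\alpha \Demopisobaric_\alpha + (1+e^\alpha)$; here, and also for the doubly-laced braid relation of $L^\alpha$, I rely on the direct computer verification acknowledged in the statement.

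Granted these three ingredients, the well-definedness of $L^w, \Lambda^w, \Lambda^w_\circ$ is forced by the word problem for Coxeter groups. For $L^w$, Matsumoto--Tits says any two reduced expressions for $w$ are related by braid and orthogonal-commutation moves, which the above identities cover. For $\Lambda^w$ and $\Lambda^w_\circ$, any two words with Demazure product $w$ are additionally related by inserting or deleting adjacent repetitions $qq$, and it is precisely the squaring-to-itself property $X^2 = X$ that absorbs such modifications and completes the argument.
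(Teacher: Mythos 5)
Your direct sixteen-term verification of $(L^\alpha)^2 = 0$ is correct (the cancellations you identify are the right ones, though you will find eight terms, not nine, that vanish by nilpotency alone). But this pedestrian route differs from the paper's, and the difference matters when you arrive at the braid relation. The paper first establishes (Proposition \ref{prop:nilheckeaction}) that the assignment
\[
r_\alpha \longmapsto J_\alpha := r_\alpha\tensor 1 + \alpha r_\alpha \tensor \partial_\alpha,
\qquad
\partial_\alpha \longmapsto D_\alpha := -\partial_\alpha\tensor 1 + r_\alpha\tensor\partial_\alpha,
\qquad
\alpha \longmapsto \alpha\tensor 1
\]
extends to a homomorphism of the nil Hecke algebra into $H^*_T[\partial]\tensor_\ZZ \ZZ[\partial]$. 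With $L^\alpha = D_\alpha\tensor 1 + J_\alpha \tensor \partial_\alpha$, the vanishing of $(L^\alpha)^2$ becomes a one-line consequence of the abstract relations $\partial_\alpha^2 = 0$, $D_\alpha J_\alpha + J_\alpha D_\alpha = 0$, $J_\alpha^2 = 1$, and --- more importantly --- the $A_2$ braid relation for $L^\alpha$ reduces by the same decomposition to four identities among $D$'s and $J$'s, each of which is the image of a nil Hecke identity. Without this homomorphism, the braid calculation is exactly as ``unwieldy'' as you fear. Your decision to ``defer to the conceptual argument in \S\ref{se:structureops}'' is circular: that section is precisely where this theorem is proved, so a blind proof must supply the argument, not cite it. This is the genuine gap in your proposal.

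A secondary issue: you assert that the squarings $(\Lambda^\alpha)^2 = \Lambda^\alpha$ and $(\Lambda^\alpha_\circ)^2 = \Lambda^\alpha_\circ$ proceed ``along the same template,'' but they do not. In the $K$-theoretic smash product the Demazure operators are idempotent rather than nilpotent, so no row of the sixteen-term expansion drops out; all terms must be collected and reassembled using inhomogeneous identities such as $r_\alpha \Demop_\alpha r_\alpha = -e^{-\alpha}\Demop_\alpha$ and $\Demop_\alpha e^{-\alpha}\Demop_\alpha = -\Demop_\alpha$. The paper carries out this reassembly in full (it is substantially longer than the cohomology case), and the paper itself remarks that no zero-Hecke analogue of Proposition \ref{prop:nilheckeaction} was found, so the structural shortcut you implicitly invoke is not available. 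Your remaining points --- the commutation for orthogonal roots, the appeal to computer checks in the doubly-laced and $\Lambda$ braid cases, and the closing well-definedness argument via Matsumoto--Tits together with insert/delete of adjacent repetitions absorbed by idempotency --- all match the paper and are fine.
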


We conjecture that all appropriate braid relations hold. These operators 
act on $H_T^*(G/B)^{\otimes 3}$, $K_T^*(G/B)^{\otimes 3}$, $K_T^*(G/B)^{\otimes 3}$
respectively, resulting in another formulation
(in \S \ref{sec:recursive}) of Theorem \ref{thm:main};
as explained there our formul\ae\ in Theorem \ref{thm:main} were what
motivated us to define the operators of Theorem \ref{thm:main2}.
(Abstractly, the $\Lambda^\alpha,\Lambda^\alpha_\circ$ operators could
be written in terms of the $\deltop$s, but to indicate the
desired actions we used the $\Demopisobaric,\Demop$ notations.)
It seems likely that further analysis of these operators would give a
purely algebraic proof of Theorem \ref{thm:main}.  

There has of course been much previous work on computing these structure
constants for general $G$, even non-positively. In particular \cite{WillemsH} makes use 
of Bott-Samelson manifolds (as do we, in \S\ref{sec:ingredients}) 
to compute point restrictions in both equivariant cohomology and equivariant $K$-theory,
and \cite[Th\'eor\`eme 7.9]{WillemsK2} then uses that formula to compute
structure constants in equivariant $K$-theory using the {\em inverse} of the matrix $\{\xi_w|_v\}_{w,v}$
(much as Billey did in cohomology in \cite[\S5-6]{Billey})
--
in particular, this approach incurs large denominators which cancel,
unlike our manifestly\footnote{%
  Whether it is ``manifest'' to you likely depends on whether you
  {\em really} believe $\partial_\alpha$ of a polynomial $p \in H^*_T$ 
  is again a polynomial. The twisted Leibniz rule
  $\partial_\alpha(pq) = (\partial_\alpha p)q + (r_\alpha p) \partial_\alpha q$
  lets one reduce to the case $p = \beta$ a root, at which point
  $\partial_\alpha \beta = \langle \alpha,\beta \rangle$.}
polynomial approach. We also point out \cite[\S 3]{Duan} (again derived
using Bott-Samelson manifolds), whose formula in (ordinary, non-equivariant) cohomology is quite analogous to the formula $c_{uv}^w = \partial_w(S_u S_v), \ell(w) = \ell(u)+\ell(v)$.
Another formula, based like ours on the multiplication in equivariant cohomology
of the Bott-Samelson manifold, appears in \cite{BerensteinRichmond};
unlike our single-product formula in Theorem \ref{th:b-formula} (cohomology case) for the structure constants of Bott-Samelson calculus, 
their formula \cite[Theorem 2.10]{BerensteinRichmond} requires a sum. 

\section*{Acknowledgments}
Both authors were privileged to hang out with Bert Kostant at MIT 
in the mid-'90s, drinking up an inexhaustible supply of Lie group nectar as well as some history of the academy.
This paper is of course highly inspired by his papers 
\cite{KostantKumarH,KostantKumarK} with Shrawan Kumar.
We thank Bal\'azs Elek for navigating us to the isotropy weights
on Bott-Samelson manifolds, 
Bill Graham for sending us his preprint \cite{Graham}
on the point restrictions $(\xi_w|_v),(\xi^\circ_w|_v)$, and Darij Grinberg
for suggesting an alternate proof of Theorem \ref{thm:main2}.

\section{Ingredients of the proof}\label{sec:ingredients}

Recall that the \defn{Bott-Samelson manifold} associated to a word
$Q = r_{\alpha_{i_1}}r_{\alpha_{i_2}}\cdots r_{\alpha_{i_\ell}}$ in
simple reflections is given by
$$
BS^Q := P_{\alpha_{i_1}} \times^B P_{\alpha_{i_2}} \times^B \cdots
\times^B P_{\alpha_{i_\ell}} /B
$$
where $P_{\alpha_{i_j}}$ is the minimal parabolic associated to the
simple reflection ${r_{i_j}}$ and the quotient is by the
equivalence relation 
\junk{ in $P_{\alpha_{i_1}} \times^B P_{\alpha_{i_2}} \times^B \cdots \times^B
  P_{\alpha_{i_\ell}}$}
given by
$(g_1,g_2, \dots, g_\ell) \sim (g_1b_1,b_1^{-1}g_2b_2, \dots,
b_{\ell-1}^{-1}g_\ell b_\ell).$  The resulting equivalence classes are
denoted $[g_1,g_2,\ldots,g_\ell] \in BS^Q$.


There is an action by $T$ on the left of $BS^Q$ with $2^{\#Q}$ fixed points;
more specifically the set of sequences
$\left\{ (g_1,g_2,\dots, g_\ell) \in
P_{\alpha_{i_1}} \times P_{\alpha_{i_2}} \times \cdots
\times P_{\alpha_{i_\ell}}\ :\ g_j \in \{1,s_j\}\ \forall j\right\}$
maps bijectively to the fixed point set $(BS^Q)^T$.
In this way we index the fixed points by subsets $L\subset \{1,\ldots,\ell\}$,
but instead of writing ``$L$ is the $\{1,2\}$ subword of $(r_2,r_3,r_2)$''
we will write ``$L$ is the subword $r_2 r_3 -$ of $(r_2,r_3,r_2)$'',
allowing distinction between e.g. the $r_2- -$ and $--r_2$ subwords.
The inclusion of the fixed points induces maps in equivariant cohomology 
and in equivariant $K$-theory each of which are known to be injective:
\begin{equation}\label{eq:inclusionBSfixedpoints}
H_T^*(BS^Q) \longrightarrow \bigoplus_{L\subset Q} H_T^*\qquad\qquad
K_T^*(BS^Q) \longrightarrow \bigoplus_{L\subset Q} K_T^*.
\end{equation}

\junk{ 
We provide some explanation of notation. For simplicity, we define $s_j = r_{i_j}$ and denote
$Q=s_1s_2\dots s_\ell$. Then each subword $L=s_{t_1}s_{t_2}\cdots s_{t_k}$  of $Q$ corresponds to a subset $\{t_1,\dots, t_k\}$ of $\{1,\dots, \ell\}$, which in turn indicates an ordered subset of possibly repeating simple reflections (those occurring as $s_{t_1},\cdots, s_{t_k}$), together with their positions within $Q$. We abuse notation and write $L\subset Q$ to indicate the ordered set of (possibly repeated) simple reflections occurring in positions $t_1,\dots, t_k$. Then for $t_b\in \{t_1,\dots, t_k\}$, and $s_{t_b}=r_m$, we simply write $m\in L\subset Q$ where the position has been suppressed.  Similarly, we write $\alpha_m$ for the simple root corresponding to the reflection.  While the same simple reflection $r_m$ may occur more than once in $L$, the corresponding position does not repeat. Furthermore, we indicate by $
\prod_{m\in L}$ the product that concatenates in the order of the subword $L$. For example, if $Q = r_1r_2r_1$ and $L = r_1 r_2$, then 
$$
\prod_{m\in L} \alpha_m r_m = \alpha_1r_1\alpha_2r_2.
$$
}

For any subword $L=s_{t_1}\cdots s_{t_k}$ of $Q$, there is a
corresponding copy of $BS^L$ obtained as a submanifold of $BS^Q$ by
$$
BS^L = \left\{ [g_1,\cdots, g_\ell]\in BS^Q\  |\  g_j\in B \mbox{ if  $j\not\in L$} \right\}.
$$
The submanifolds $BS^L$ are $T$-invariant, and each $BS^L_\circ := BS^L \setminus \bigcup_{M \subsetneq L} BS^M$ contains a unique $T$-fixed point 
$[g_1, \dots, g_\ell]\in BS^L$, the one we also corresponded to $L$. 

The equivariant homology classes $\{[BS^L] \ :\ L\subset Q\}$ form a
basis of $H_*^T(BS^Q)$ as a (free) module over $H_T^*$. There exists a
dual basis $\{T_J\}_{J\subset Q}$ of $H^*_T(BS^Q)$, again defined by
the $H^*_T$-valued Alexander pairing $\langle,\rangle$.  The classes
$\{[\calO_{BS^L}]\ : \ L\subset Q\}$,
$\{[I_{BS^L}]\ : \ L\subset Q\}$ also form bases for $K^T_*(BS^Q)$ as
a module over $K_T^*$, again with the simple change-of-basis matrices
$$ [\calO_{BS^R}] = \sum_{S\subset R} [I_{BS^S}], \qquad\qquad
[I_{BS^R}] = \sum_{S\subset R} (-1)^{|R\setminus S|}[\calO_{BS^S}]. $$
We denote the dual bases under the Alexander pairing 
by $\{\tau^\circ_J\}_{J\subset Q}, \{\tau_J\}_{J\subset Q}$ respectively, with the result that
\begin{equation}\label{eq:tautaucirc}
\tau^\circ_R = \sum\limits_{P\supset R} (-1)^{|P\setminus R|} \tau_P, \qquad\qquad \tau_R = \sum\limits_{P\supset R} \tau^\circ_P
\end{equation}
We compute the point restrictions of $T_J$, $\tau_J^\circ$, $\tau_J$
in Lemma \ref{lemma:Trestriction}.

Consider the natural map $\pi_R:\ BS^R \rightarrow G/B$ that multiplies
the terms, $[g_1,\ldots,g_\ell] \mapsto (\prod_i g_i) B/B$. 
The image is $B$-invariant, irreducible, and closed,
so is necessarily some $X^w$ (indeed, $w$ is the Demazure product $\wt\prod R$).
However $\dim BS^R = \dim X^w$ if and only if $R$ is a reduced word, 
in which case the top homology class of $BS^R$ pushes forward to that of $X^w$. 
\junk{ Furthermore, for $I$ a reduced word for the longest word $w_0$, the map $\pi_I: BS^I\rightarrow G/B$ is surjective with $\pi_I(BS^Q\subset BS^I)=X^w$ whenever $w=\prod Q$. {\bf Who cares}}
The pushforward sends the homology class of $BS^R$ to that
of $X^w$ in $G/B$ whenever $R$ is a reduced word for $w$, and
otherwise sends it to $0$. These statements are true both for singular
homology and also, since the varieties involved are $T$-invariant,
for 
equivariant homology \cite{KumarNori,BrionH}.  
For the corresponding statement in equivariant $K$-theory, 
note that the pushforward at the level of {\em
  sheaves} is $\pi_*(\calO_{BS^R})=\calO_{X^w}$
($w = \wt\prod R$)
with no higher derived pushforwards \cite[Theorem 3.4.3]{Brion-Kumar}, 
so of course it's also true at the level of
$K$-theory classes. For lack of a reference, we compute $\pi_*[I_{BS^R}]$:

\begin{Lemma}
  $ \pi_*[I_{BS^R}] = (-1)^{|R|-\ell(w)}\ [I_{X^{w}}] $, where $w = \wt\prod R$.
\end{Lemma}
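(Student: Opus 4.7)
The plan is to convert the claim through the two change-of-basis formulas (between $[\calO_{BS^{\bullet}}]$ and $[I_{BS^{\bullet}}]$, and between $[\calO_{X^{\bullet}}]$ and $[I_{X^{\bullet}}]$) to reduce the geometric statement to a single combinatorial identity in the $0$-Hecke algebra, which I then prove by induction on $|R|$.

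First, expanding $[I_{BS^R}] = \sum_{S \subseteq R}(-1)^{|R|-|S|}[\calO_{BS^S}]$ and applying $\pi_*$ using the already-cited $\pi_*[\calO_{BS^S}] = [\calO_{X^{\wt\prod S}}]$ gives
$$\pi_*[I_{BS^R}] = \sum_{S \subseteq R}(-1)^{|R|-|S|}[\calO_{X^{\wt\prod S}}].$$
Grouping by the value $v := \wt\prod S$ (automatically $v \leq w$), this equals $(-1)^{|R|}\sum_{v \leq w} N_v(R)[\calO_{X^v}]$, where $N_v(R) := \sum_{S \subseteq R,\ \wt\prod S = v}(-1)^{|S|}$. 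On the other hand, the stated inversion rewrites the target as $(-1)^{|R|-\ell(w)}[I_{X^w}] = (-1)^{|R|}\sum_{v \leq w}(-1)^{\ell(v)}[\calO_{X^v}]$. Since the classes $\{[\calO_{X^v}]:v\leq w\}$ are $K_T$-linearly independent, the lemma reduces to the purely combinatorial assertion $N_v(R) = (-1)^{\ell(v)}$ for all $v \leq w$ (and $N_v(R)=0$ for $v \not\leq w$).

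Recognizing $\sum_S (-1)^{|S|}\deltop_{\wt\prod S}$ as the expansion of $\prod_{r \in R}(1-\deltop_r)$ in the $0$-Hecke algebra $\ZZ[\deltop]$, the reduction amounts to proving
$$\prod_{r \in R}(1 - \deltop_r) = \sum_{v \leq \wt\prod R}(-1)^{\ell(v)}\,\deltop_v.$$
I would prove this by induction on $|R|$: writing $R = R' \cdot r$, applying the hypothesis to $R'$, and right-multiplying by $(1 - \deltop_r)$, the Demazure rule $\deltop_v \deltop_r = \deltop_{v * r}$ makes right-descending $v$ (those with $vr < v$) contribute zero, while each right-ascending $v \leq \wt\prod R'$ contributes the pair of terms $(-1)^{\ell(v)}\deltop_v + (-1)^{\ell(vr)}\deltop_{vr}$.

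The main obstacle is then showing that, as $v$ ranges over right-ascending elements of $[e, \wt\prod R']$, the unordered pairs $\{v, vr\}$ exactly tile the Bruhat interval $[e, \wt\prod R]$. This splits into the two subcases $\wt\prod R = (\wt\prod R')\cdot r$ and $\wt\prod R = \wt\prod R'$; both are handled via the lifting property of Bruhat order, using in the second subcase the classical fact that when $r$ is a right descent of $w' = \wt\prod R'$, the interval $[e, w']$ is invariant under right-multiplication by $r$.
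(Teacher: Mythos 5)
Your proposal is correct, and it diverges from the paper's proof at the key step. Both arguments begin identically: expand $[I_{BS^R}]$ in $\{[\calO_{BS^S}]\}$, push forward term-by-term, group by $u = \wt\prod S$, and reduce to proving the combinatorial identity $\sum_{S \subseteq R,\ \wt\prod S = u} (-1)^{|S|} = (-1)^{\ell(u)}$ for all $u \leq \wt\prod R$. Where the two proofs differ is in how this identity is established. The paper interprets the signed sum as (essentially) the reduced Euler characteristic of the interior of the subword complex $\Delta(R,u)$ from \cite{KMsubword}, and then invokes the theorem that $\Delta(R,u)$ is a ball when $u < \wt\prod R$ and a sphere when $u = \wt\prod R$ — a substantial topological input. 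You instead package the identity as the $0$-Hecke statement $\prod_{r\in R}(1-\deltop_r) = \sum_{v \leq \wt\prod R}(-1)^{\ell(v)}\deltop_v$ and prove it by a clean induction on $|R|$, appending the last letter and pairing up the Bruhat interval $[e,\wt\prod R]$ via right multiplication by $r$. The pairing argument in your final paragraph is exactly the Bruhat lifting property (used once in each of the two subcases $\wt\prod R = (\wt\prod R')r$ and $\wt\prod R = \wt\prod R'$), and it goes through without issue. Your route buys a more elementary, self-contained proof that avoids the ball/sphere theorem for subword complexes; the paper's route, by contrast, makes the appearance of subword-complex topology visible, which fits the surrounding geometric narrative.
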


\begin{proof} We write $\calO(Y)$ for $\calO_Y$ in this proof, for
  readability.
  \begin{eqnarray*}
&&     \pi_*[I_{BS^R}] \\
    &=& \pi_* \sum_{S\subset R} (-1)^{|R\setminus S|} [\calO(BS^S)] 
    = \sum_{S\subset R} (-1)^{|R\setminus S|} \pi_* [\calO(BS^S)] 
    = \sum_{S\subset R} (-1)^{|R\setminus S|} [\calO(X^{\wt\prod S})] \\
    &=& \sum_{u \leq \wt\prod R} [\calO(X^u)] 
        \sum_{S\subset R, \ \wt\prod S = u} (-1)^{|S\setminus R|} 
    = \sum_{u \leq \wt\prod R} [\calO(X^u)]\ (-1)^{|R|} 
        \sum_{S\subset R, \ \wt\prod S = u} (-1)^{|S|} \\
    &=& \sum_{u \leq \wt\prod R} [\calO(X^u)]\ (-1)^{|R|} \
        \sum \left\{ (-1)^{|S|} \ :\ R\setminus S\text{ an interior face of 
        $\Delta(R,u)$} \right\} \\
    &=& \sum_{u \leq \wt\prod R} [\calO(X^u)] \
        \phantom{(-1)^{|R|} }
        \sum \left\{ (-1)^{|F|} \ :\ \phantom{R\setminus S} F \text{ an interior face of 
        $\Delta(R,u)$} \right\} \\
    &=& \sum_{u \leq \wt\prod R} [\calO(X^u)] \left(
        \left[u = \wt\prod R\right] +
        \sum \left\{ (-1)^{|F|} \ :\ F\neq \emptyset, 
        F \text{ an interior face of $\Delta(R,u)$} \right\} \right) \\
    &=& \sum_{u \leq \wt\prod R} [\calO(X^u)] \left(
        \left[u = \wt\prod R\right] +
        \sum \left\{ -\chi_c(\text{open $(|F|-1)$-simplex}) \ :\ 
        F\neq\emptyset, F \text{ face of $\Delta(R,u)^\circ$} \right\}  \right)\\
    &=& \sum_{u \leq \wt\prod R} [\calO(X^u)]\  
        \left(\ \left[u = \wt\prod R\right] - \chi_c(\Delta(R, u)^\circ)  \right)
        \qquad\text{by additivity of $\chi_c$}
  \end{eqnarray*}
  where $\Delta(R,u)$ is the subword complex of the pair $(R,u)$
  introduced in \cite{KMsubword}, proven to be a $(|R|-\ell(u)-1)$-ball for
  $u < \wt\prod R$, and a $(|R|-\ell(u)-1)$-sphere for $u = \wt\prod R$. 
  The signed sum is computing the negative of the Euler characteristic
  of the {\em interior} of this ball/sphere (because of the
  $\wt\prod S = u$ not just $\geq u$ condition), plus $1$ in the
  sphere case: the topology never sees the empty face, which is okay exactly
  in the ball case (since then and only then,
  the empty face doesn't lie in the interior).

  Consequently, the parenthetical term is $(-1)^{|R|-\ell(u)}$, for different
  reasons in the ball, even-dim sphere, and odd-dim sphere cases,
  and so we continue
  $$ =  \sum_{u \leq \wt\prod R} [\calO(X^u)]\  (-1)^{|R|-\ell(u)}
  = (-1)^{|R|-\ell(w)} 
  \sum_{u \leq w} [\calO(X^u)]\  (-1)^{\ell(w)-\ell(u)}
  = (-1)^{|R|-\ell(w)} [I_{X^{w}}]
  $$
  where $w = \wt\prod R$.
\end{proof}

We are interested in the resulting transpose maps in equivariant cohomology and
equivariant $K$-theory. We keep track of the various dual bases here:
$$
\begin{array}{ccccc}
  \underline{\text{($K$-)homology basis}} 
  &\text{dual to} & \underline{\text{($K$-)cohomology basis}}
  &\text{with} & \underline{\text{structure constants}} \\
  \\ \\
  \begin{array}{c|c|c}
    & G/B & BS^Q \\
    \hline   H_* & [X^w] & [BS^J] \\
    \hline   K_* & [\calO_{X^w}] & [\calO_{BS^J}] \\
    \hline   K_* & [I_{X^w}] & [I_{BS^J}] \\
  \end{array} 
  &&
     \begin{array}{c|c|c}
       & G/B & BS^Q \\
       \hline     H^* & S_w & T_J \\
       \hline     K^* & \xi^\circ_w & \tau_J^\circ \\
       \hline     K^* & \xi_w & \tau_J \\
     \end{array}
  &&
     \begin{array}{c|c|c}
       & G/B & BS^Q \\
       \hline     H^* & c_{uv}^w & b_{uv}^w \\
       \hline     K^* & \acirc_{uv}^w & \dcirc_{uv}^w \\
       \hline     K^* & a_{uv}^w & d_{uv}^w 
     \end{array}
\end{array}
$$
\junk{
\AKrem{The below was stated all in cohomology instead of, properly,
  half in homology, so I made the cube above. Maybe we'll have another line
  for the other bases of $K$}
$$
\begin{array}{ccccc}
 \{T_J\} &\mbox{ dual to } &\{[BS^J]\}& \mbox{ in } &H^*_T(BS^Q),\\ 
 \{S_w\} &\mbox{ dual to } & \{S^w\} &\mbox{ in } &H^*_T(G/B),\\
\{\tau_J^\circ\} &\mbox{ dual to } &\{[\calO_{BS^J}]\} &\mbox{ in } &K_T^*(BS^Q),\\ 
\{\xi_w\} &\mbox{ dual to } &\{[\calO_{X^w}]\} &\mbox{ in } &K_T^*(G/B)
\end{array}
$$
}
\junk{
  As with ordinary cohomology, there is a pairing between the
  equivariant homology $H^T_*(G/B)$  and the equivariant cohomology $H_T^*(G/B)$. The $H^*_T$-basis of equivariant homology classes associated with $\{X^w\}$ has a dual basis  $\{x_w\}\in H_T^*(G/B)$ given by the relationship
  $$
  \int_{X^w} x_v =\delta_{wv},
  $$
  where integration is shorthand notation for capping the equivariant homology element corresponding to the $T$-invariant variety $X^w$ with the equivariant cohomology class $x_v$, then pushing forward to a point \cite{Graham}.  A minor adaption of an argument in that work shows that $x_w = S_w$ is the opposite Schubert class (associated with the opposite Schubert variety $X_w$).
}

Transposing the statements 
$(\pi_Q)_*([BS^R]) = [X^w] [R$ reduced$]$, 
$(\pi_Q)_*([\calO_{BS^R}]) = [\calO_{X^w}]$, 
$(\pi_Q)_*([I_{BS^R}]) = (-1)^{|R|-\ell(w)}[I_{X^{w}}]$ (for $w = \wt\prod R$)
from ($K$-)homology, we obtain the following:

\junk{Ooh! We could have a factor $\beta^{|R|-\ell(w)}$, with
  $\beta = 0,1,-1$ in the respective cases!}

\begin{Lemma} 
Let $\pi_Q: BS^Q \rightarrow G/B$ be the product map. 
$$
\pi_Q^*(S_w) = \sum_{R\subset Q\text{ reduced} \atop  \prod R =w} T_R,
\qquad
\pi_Q^*(\xi^\circ_w) = \sum_{R\subset Q \atop \wt\prod R = w}\tau_R^\circ,
\qquad\text{and}\qquad
\pi_Q^*(\xi_w) = \sum_{R\subset Q \atop \wt\prod R = w} (-1)^{|R|-\ell(w)} \tau_R
$$
in equivariant cohomology and equivariant $K$-theory.
\end{Lemma}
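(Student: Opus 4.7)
The plan is to extract each expansion coefficient by pairing against the corresponding dual homology class, then invoke the projection formula (adjunction) for $\pi_Q$ with respect to the Alexander pairing, and finally read off the pushforward formulas that were just established.

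First, since $\{T_J\}_{J \subset Q}$ is the $H_T^*$-basis of $H_T^*(BS^Q)$ dual under the Alexander pairing to the homology basis $\{[BS^J]\}_{J\subset Q}$, writing $\pi_Q^*(S_w) = \sum_J c_J\, T_J$ one has $c_J = \langle \pi_Q^*(S_w), [BS^J]\rangle$. By the projection formula (which holds for Alexander pairings because cap product commutes with proper pushforward, and $\pi_Q$ is proper), this equals $\langle S_w, (\pi_Q)_*[BS^J]\rangle$. Now invoke the standard fact recalled in the excerpt: $(\pi_Q)_*[BS^J] = [X^{\prod J}]$ when $J$ is a reduced subword of $Q$, and $0$ otherwise. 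Since $\langle S_w, [X^u]\rangle = \delta_{wu}$, we get $c_J = 1$ exactly when $J$ is reduced with $\prod J = w$, and $0$ otherwise. This gives the first formula.

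The $K$-theoretic formulas follow by exactly the same two-step recipe, but using the bases tabulated in the cube in the text. For the second formula, $\{\tau^\circ_R\}$ is dual to $\{[\calO_{BS^R}]\}$ and $\{\xi^\circ_w\}$ is dual to $\{[\calO_{X^w}]\}$, and the pushforward fact is $(\pi_Q)_*[\calO_{BS^R}] = [\calO_{X^{\wt\prod R}}]$ (no higher derived pushforwards, per the cited Brion--Kumar reference). Hence the coefficient of $\tau^\circ_R$ in $\pi_Q^*(\xi^\circ_w)$ is
$$
\langle \pi_Q^*(\xi^\circ_w),\, [\calO_{BS^R}]\rangle
= \langle \xi^\circ_w,\, [\calO_{X^{\wt\prod R}}]\rangle
= \delta_{w,\,\wt\prod R},
$$
yielding the claimed sum. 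For the third formula, $\{\tau_R\}$ is dual to $\{[I_{BS^R}]\}$ and $\{\xi_w\}$ is dual to $\{[I_{X^w}]\}$, and now the preceding lemma supplies the less-obvious pushforward $(\pi_Q)_*[I_{BS^R}] = (-1)^{|R|-\ell(\wt\prod R)}\,[I_{X^{\wt\prod R}}]$. Adjunction then gives coefficient $(-1)^{|R|-\ell(w)}$ exactly when $\wt\prod R = w$.

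The one place where genuine work is hidden is the projection formula for the $K$-theoretic Alexander pairing; the cohomological version is classical, and the $K$-theoretic version needs that $\pi_Q$ be proper (it is, as $BS^Q$ is compact, or at least $\pi_Q$ maps to a Schubert subvariety of $G/B$) and that pushforward be well defined, which in our setting follows from the sheaf-level statements that $R\pi_{Q,*}\calO_{BS^R} = \calO_{X^{\wt\prod R}}$ and the computation performed in the preceding lemma for the ideal sheaves. Modulo this standard adjunction, the proof is a one-line contraction in each of the three cases.
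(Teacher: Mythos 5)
Your proof is correct and is essentially the same as the paper's: pair against the dual ($K$-)homology basis, apply adjunction of $\pi_Q^*$ against $(\pi_Q)_*$ under the Alexander pairing, and invoke the three pushforward facts $(\pi_Q)_*[BS^R]=[X^{\prod R}][R\text{ reduced}]$, $(\pi_Q)_*[\calO_{BS^R}]=[\calO_{X^{\wt\prod R}}]$, and $(\pi_Q)_*[I_{BS^R}]=(-1)^{|R|-\ell(\wt\prod R)}[I_{X^{\wt\prod R}}]$. The paper writes out only the cohomology case and declares the other two ``similarly tautological''; you spell all three out, which is fine.
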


\begin{proof} 
 The result $\pi_Q^*(S_w)$ is  \cite[Proposition 3.26]{WillemsH}). 
  Let $[BS^R], [X^w]$ denote the equivariant homology classes, 
  and $\langle,\rangle_M$ denote the perfect $H^*_T$-valued pairing
  between $H^T_*(M)$ and $H_T^*(M)$ for $M$ a smooth compact
  oriented $T$-manifold.  Then
  \begin{eqnarray*}
    \langle \pi_Q^*(S_w), [BS^R]\rangle_{BS^Q} &=&
  \langle S_w, (\pi_Q)_*( [BS^R]) \rangle_{G/B} 
   =  \begin{cases}
    \langle S_w, [X^v] \rangle_{G/B}
    & \mbox{if $R$ reduced, with $\prod R = v$}\\
    0 & \mbox{otherwise}
  \end{cases} \\
  &=&  \begin{cases}
    1
    & \mbox{if $R$ reduced, with $\prod R = w$}\\
    0 & \mbox{otherwise.}
  \end{cases}
  \end{eqnarray*}
  
\junk{
  Similarly, by using the $K$-theoretic pairing
  $\langle, \rangle^K_{M}$ on $M$, we have
   \begin{eqnarray*}
    \langle \pi_Q^*(\xi_w), [\calO_{BS^L}]\rangle^{K}_{BS^Q} &=&
  \langle \xi_w, (\pi_Q)_*( [BS^L]) \rangle^{K}_{G/B} 
   =  \begin{cases}
    \langle \xi_w, [\calO_{X^v}] \rangle^K_{G/B} 
    & \mbox{if $\wt\prod L = v$}\\
    0 & \mbox{otherwise}
  \end{cases} \\
  &=&  \begin{cases}
    1
    & \mbox{if $\wt\prod L = w$}\\
    0 & \mbox{otherwise.}
  \end{cases}
  \end{eqnarray*}

\junk{  This pairing is $1$ when $v=w$, and $0$ otherwise. 
  Therefore, $\langle \pi_I^*(S_w), [BS^L] \rangle=1$ if and only if $L$ is a
  reduced word for $w$. }
  
Since  $\{T_R\}$ are defined so that
  $\langle T_R, [BS^L] \rangle_{BS^Q} = \delta_{RL}$, and $\{\tau_R^\circ\}$ so that $\langle \tau_R^\circ, [\calO_{BS^L}]\rangle^K_{BS^Q}=\delta_{RL}$, we conclude that
  $\pi_Q^*(S_w) = \sum_{R\subset Q\text{ reduced} \atop \prod R=w} T_R$ and $\pi_Q^*(\xi_w) = \sum_{R\subset Q \atop \wt\prod R=w} \tau^\circ_R$.
}
The other two statements are similarly tautological.
\end{proof}

We pull back the equation
$S_u S_v = \sum_{x\in W} c_{uv}^x S_x$ along $\pi_Q:\ BS^Q\to G/B$ 
and simplify the right-hand side of the equation:
\begin{equation}
  \label{eq:LHS}
  \pi_Q^*(S_u)\ \pi_Q^*(S_v) 
=  \sum_{x\in W} c_{uv}^x\ \pi_Q^*(S_x) 
= \sum_{x\in W} c_{uv}^x\sum_{R\subset Q\text{ reduced} \atop \prod R =x} T_R
= \sum_{R\subset Q\text{ reduced}}  c_{\ uv}^{\prod R}\  T_R.
\end{equation}
By expanding the left hand side in a similar fashion, we obtain
$$
\pi_Q^*(S_u) \pi_Q^*(S_v) =  \sum_{R\subset Q \text{ reduced} \atop \prod R =u} T_R \sum_{S\subset Q\text{ reduced}\atop \prod S =v} T_S
= \sum_{R,S\subset Q\text{ reduced} \atop \prod R =u, \prod S = v} T_R T_S.
$$
Define $b_{RS}^J$ to be the structure constants for the multiplication
in $H_T^*(BS^Q)$ in the basis $\{T_J\}$, defined by the relationship
$$
 T_R T_S = \sum_{J\subset Q} b_{RS}^J T_J.
$$
Thus we have shown
\begin{equation}
  \label{eq:RHS}
\pi_Q^*(S_u) \pi_Q^*(S_v) = \sum_{R,S\subset Q\text{ reduced} \atop  \prod R =u, \prod S = v}  \sum_{J\subset Q} b_{RS}^J T_J.
\end{equation}
Now take $Q$ to be reduced with $\prod Q = w$ 
and match the $T_Q$ coefficients in (\ref{eq:LHS}) and (\ref{eq:RHS}):
\begin{equation}\label{eq:cs=sumofbs}
  c_{uv}^w = \sum_{R,S\subset Q \text{ reduced} \atop  \prod R =u,\, \prod S = v} b_{RS}^Q.
\end{equation}
The $K$-theoretic results are similar. Let $Q$ be any expression whose
Demazure product is $w$, e.g. a reduced word. We obtain by the same derivation
\begin{equation}
  \label{eq:as=sumofds}
  a_{uv}^w = \sum_{R,S\subset Q\atop \wt\prod R = u,\, \wt\prod S = v} 
  (-1)^{|R|+|S|-\ell(u)-\ell(v)} d_{RS}^Q\ ,
  \qquad\qquad
  \acirc_{uv}^w = \sum_{R,S\subset Q\atop \wt\prod R = u,\, \wt\prod S = v} \dcirc_{RS}^Q
\end{equation}
where the sum is over all subwords whose Demazure products 
are $u$ and $v$, respectively.

\begin{Theorem}\label{th:b-formula} Let the equivariant intersection
  numbers $b_{RS}^Q$ be defined as in \eqref{eq:cs=sumofbs}. For $R,S\subset Q$,
  $$
  b_{RS}^Q  \quad = \quad
   \prod_{q\in Q} \left( \alpha_q^{[q\in R \cap S]} \partial_q^{\left[q\notin R\cup S\right]}
    r_q \right)\cdot 1 \quad = \quad
   \prod_{q\in Q} \left( \alpha_q^{[q\in R\cap S]} r_q (-\partial_q)^{\left[q\notin R\cup S\right]}
    \right)\cdot 1,
  $$
  where the exponent $[q\in R\cap S] \in \{0,1\}$ (resp. $[q\notin R\cup S]$) indicates inclusion of the
  factor only when $q\in R\cap S$ (resp. q$\notin R\cup S$). Similarly, for $d_{RS}^Q$ and $\dcirc_{RS}^Q$ defined in \eqref{eq:as=sumofds},
  \begin{eqnarray*}
    d_{RS}^Q &=&\left( \prod_{q\in Q} {(e^{\alpha_q})^{ [q\notin {R\cup S}] }}
  {(1-e^{-\alpha_q})}^{[q\in R \cap S]} \, r_q \, 
                 (-\Demop_q)^{[q\notin  R \cup S]}) \, \right) \cdot 1 
                 \qquad
    \text{where }\ \Demop_q f := \frac{f - r_\alpha f}{1-e^{-\alpha}} \\
  \dcirc_{RS}^Q &=& \left( \prod_{q\in Q} 
  (e^{-\alpha_q})^{[q\notin {R\cap S}]} (1-e^{-\alpha_q})^{[q\in R \cap S]} 
  r_q \,  (-\Demopisobaric_q)^{[q\notin R \cup S]}  \right) \cdot 1 
                    \qquad
    \text{where }\ \Demopisobaric_q f := \frac{f - e^{-\alpha}r_\alpha f}{1-e^{-\alpha}}\\
  \end{eqnarray*}
\junk{
  \AKrem{old version:}
  $$
  d_{RS}^Q = \bigg( \prod_Q \,xs
  (e^{-\alpha_q})^{[q\in \overline{V\cap W}]}(1-e^{-\alpha_q})^{[i\in V \cap W]}  (-\delta_q)^{[q\in \bar V \cap \bar W]} \, r_q^{q\in V\cup W} \bigg) \cdot 1
  $$
}
\junk{
  where the $\Demopisobaric_i$ are the usual \defn{isobaric divided difference operators} in
  the simple roots $\alpha_i$,
  defined by $\Demopisobaric_i f := (f - e^{-\alpha_i}r_i\cdot f)/ (1-e^{-\alpha_i})$.
}
\end{Theorem}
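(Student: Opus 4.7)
The plan is to induct on $|Q|$, peeling off the last letter $q = r_\alpha$ and exploiting the Bott-Samelson $\PP^1$-fibration $\pi \colon BS^Q \to BS^{Q'}$, where $Q = Q' \cdot q$. The base case $Q = \emptyset$ is immediate: $BS^\emptyset$ is a point and the only structure constant $b_{\emptyset,\emptyset}^\emptyset = d_{\emptyset,\emptyset}^\emptyset = \dcirc_{\emptyset,\emptyset}^\emptyset$ equals $1$, matching the empty product.

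For the inductive step, I would first describe the classes $T_J$ (and their $K$-theoretic analogues $\tau_J, \tau_J^\circ$) recursively along $\pi$. Each subword $J \subset Q$ is either $J = J'$ (if $q \notin J$) or $J = J' \cup \{q\}$ (if $q \in J$), with $J' \subset Q'$. Using the zero-section divisor class $\eta \in H^2_T(BS^Q)$ of the embedding $BS^{Q'} \hookrightarrow BS^Q$ corresponding to the subword $Q'$ (and its $K$-theoretic replacements), together with the pullback $\pi^*$, I would express the $T_{J'}$ and $T_{J' \cup \{q\}}$ on $BS^Q$ in terms of $\pi^* T_{J'}$ on $BS^{Q'}$ and $\eta$. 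The analogous identifications in the two $K$-theory bases will involve the multiplicative factors $(1 - e^{-\alpha_q})$ and $e^{\pm\alpha_q}$ inherited from the $T$-weight $\pm\alpha_q$ on the fiber $P_q/B \cong \PP^1$.

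Multiplying $T_R T_S$ then reduces to a three-way case analysis depending on whether $q$ lies in $R \cap S$, in exactly one of $R,S$, or in neither. In the first case both classes contain a factor of $\eta$ and the product picks up $\eta^2$, which on pulled-back classes acts as $\alpha_q r_\alpha$ (respectively $(1-e^{-\alpha_q}) r_\alpha$ in $K$-theory); in the second case a single $\eta$ appears and the operator is just $r_\alpha$ (with an appropriate $e^{\pm\alpha_q}$ twist in $K$-theory); in the third case no $\eta$ appears and the Gysin pushforward $\pi_*$, used to extract the $T_Q$-coefficient, contributes the divided difference $\partial_\alpha$ in equivariant cohomology and $-\Demop_\alpha$ or $-\Demopisobaric_\alpha$ in the two $K$-theoretic variants respectively. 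Assembling these three cases yields a single-letter recursion $b_{RS}^Q = \mathrm{op}_{q,R,S}\cdot b_{R'S'}^{Q'}$ (with $R', S'$ the restrictions of $R, S$ to $Q'$, and analogously for $d$ and $\dcirc$), where $\mathrm{op}_{q,R,S}$ matches the $q$-th factor in the claimed product. Iterating over all letters of $Q$ and using the base case gives the theorem.

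The main obstacle is making this three-way case analysis rigorous at the level of (co)homology classes on the $\PP^1$-fibration: identifying $T_J$ and its $K$-theoretic analogues in terms of $\pi^*$-classes and $\eta$, and then correctly computing the Gysin pushforward $\pi_*$ in each of the three contexts. The $K$-theoretic case is particularly delicate because there are two natural bases, and the appearance of $\Demop_\alpha$ in the ideal-sheaf formula $d_{RS}^Q$ versus $\Demopisobaric_\alpha$ in the structure-sheaf formula $\dcirc_{RS}^Q$ reflects the distinct Gysin formulas for the two bases; the prefactor $(-1)^{|R|-\ell(w)}$ from the pushforward lemma $\pi_*[I_{BS^R}] = (-1)^{|R|-\ell(w)}[I_{X^{w}}]$ proved above feeds directly into the sign conventions required in $d_{RS}^Q$. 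Finally, the second (right-acting) form of the cohomology formula follows from the first by applying the commutation $\partial_\alpha r_\alpha = -r_\alpha \partial_\alpha$ factor-by-factor.
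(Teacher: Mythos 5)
Your proposal peels off the \emph{last} letter $q$ of $Q$ and exploits the $\PP^1$-fibration $\pi\colon BS^Q\to BS^{Q'}$; the paper's proof instead peels off the \emph{first} letter, because its inductive engine is the explicit fixed-point restriction formula (Lemma~\ref{lemma:Trestriction}), which is naturally left-recursive: one restricts the identity $T_R T_S = \sum_J b_{RS}^J T_J$ to the fixed point $J$, uses the triangularity of Lemma~\ref{le:vanishing}, and divides by $T_J|_J$, giving the recursions \eqref{eq:inductivestep}--\eqref{eq:Kinductivestep}. Your route is a genuinely different, more intrinsically geometric approach, in the spirit of Willems's Bott-tower computations; if completed it would give an alternative proof that avoids the Woods Hole localization formula (Proposition~\ref{prop:woodshole}) entirely.

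But the step you flag as ``the main obstacle'' is exactly where the proof lives, and you have not carried it out. You would need to (i) express the $T_J$, $\tau_J$, $\tau^\circ_J$ for $J\subset Q$ as polynomials in the section-divisor class $\eta$ with coefficients pulled back from $BS^{Q'}$, (ii) compute $\pi_*$ on each monomial in the resulting expansion of $T_R T_S$ (respectively $\tau_R\tau_S$, $\tau^\circ_R\tau^\circ_S$), and (iii) check that what comes out is a single-letter operator applied to $b_{R'S'}^{Q'}$ matching the claimed product. Step (i) is already nontrivial: the pullback side is easy ($\pi^*T_{J'}=T_{J'}$, seen by transposing $\pi_*[BS^K]=[BS^K]$ for $q\notin K$ and $=0$ for $q\in K$), but the expression of $T_{J'\cup\{q\}}$ in terms of $\eta$ and $\pi^*$-classes is not written down, and the $K$-theoretic version --- where the distinction between $-\Demop_\alpha$ and $-\Demopisobaric_\alpha$ must emerge from the two different $K$-theoretic Gysin formulas for the $\PP^1$-bundle --- is where the real content lies. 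Finally, you invoke $\pi_*[I_{BS^R}]=(-1)^{|R|-\ell(w)}[I_{X^w}]$ as the source of the sign in $d_{RS}^Q$, but that lemma concerns the product map $BS^R\to G/B$ used in \S\ref{sec:ingredients} to relate $a_{uv}^w$ to $d_{RS}^Q$; it is not the Gysin formula for the bundle projection $BS^Q\to BS^{Q'}$, which is the one your argument actually needs. So the sketch identifies a reasonable shape for an alternative argument but leaves its central computation --- precisely what the paper does in detail --- unaddressed.
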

\junk{
  {\color{blue} ``Do we need to concern ourselves with what we mean when $R,S$ are not reduced words contained in $Q$?'' If we want $H^*(BS^Q)$ then yes. While we show that $b_{RS}^Q=0$ when $Q$ doesn't contain $R,S$, we never need an interpretation for $b_{RS}^Q$ when $R,S,Q$ are not reduced. They may not be zero.}
  \AKrem{Where do we use that $R,S$ are reduced anyway?}
}

Theorem \ref{thm:main} then follows directly from
Theorem~\ref{th:b-formula} and \eqref{eq:cs=sumofbs},\eqref{eq:as=sumofds}.
The proof of Theorem~\ref{th:b-formula} is an inductive argument based on Lemma \ref{lemma:Trestriction} 
below.

As with Schubert classes, we define the point restriction $T_R|_S$ to
be the pullback of $T_R \in H_T^*(BS^Q)$ along the
inclusion of 
the fixed point $S\subset Q$. 
These restrictions can be computed explicitly. 
In the following Lemma, the results for $T_R|_S$ and $\tau_R^\circ|_S$ (first equality) are found in  \cite[Th\'eor\`eme 3.11]{WillemsH}) and \cite[Th\'eor\`eme 6.2]{WillemsK2}, respectively.
\begin{Lemma}\label{lemma:Trestriction}
  The classes $T_R \in H_T^*(BS^Q), \tau_R \in K_T^*(BS^Q)$ 
  have the following restrictions to a $T$-fixed point $S$:
  $$
  T_R|_S=\begin{cases}
    \displaystyle \left(\prod_{m\in S} \alpha_{m}^{[m\in R]} r_m \right)\cdot 1 &\mbox{if $R\subset S$}\\
    0 &\mbox{if $R \not\subset S$}
  \end{cases}
\qquad
  \tau_R |_S =\begin{cases}
    \displaystyle
    \left(\prod_{m\in S} (1-e^{-\alpha_m})^{[m\in R]} r_m\right)\cdot 1  
&\mbox{if $R\subset S$}\\
    0 &\mbox{if $R \not\subset S$.}
  \end{cases}
  $$
  where the exponent $[m\in R]$ indicates inclusion of the factor only
  when $m\in R$.
 The classes $\tau_R^\circ\in K_T^*(BS^Q)$ have the following restrictions
  to fixed points:
  $$
  \tau^\circ_R |_S 
  =\begin{cases}
    \displaystyle
    \left(\prod_{m\in S} (e^{-\alpha_m})^{[m\notin R]} (1-e^{-\alpha_m})^{[m\in R]} r_m\right)\cdot 1  
    &\mbox{if $R\subset S$}\\
    0 &\mbox{if $R \not\subset S$}
  \end{cases}
    \qquad =  \sum_{J\subset S} (-1)^{|J\setminus R|}
    \tau_J|_{S} 
    $$
  where we note that the last sum is $0$ if $R$ is not contained in $S$.
In particular none of $T_R|_R$, $\tau_R|_R$, and $\tau^\circ_R|_R$ vanish.
  \end{Lemma}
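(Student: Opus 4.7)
The plan is to deduce all three restriction formulas from two already in the literature: \cite[Th\'eor\`eme~3.11]{WillemsH} for $T_J|_L$ and \cite[Th\'eor\`eme~6.2]{WillemsK2} for $\tau_J^\circ|_L$. The only genuinely new formula is the one for $\tau_J|_L$, which I would obtain purely formally from the change of basis between the two $K_T$-homology bases $\{[\calO_{BS^M}]\}$ and $\{[I_{BS^M}]\}$ of $K^T_*(BS^Q)$ already recalled in the excerpt.

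First I establish a dual expansion. From $[\calO_{BS^L}] = \sum_{M\subset L}[I_{BS^M}]$ and the defining duality pairings $\langle \tau_J,[I_{BS^M}]\rangle = \delta_{JM}$, $\langle \tau_J^\circ,[\calO_{BS^M}]\rangle = \delta_{JM}$, M\"obius inversion on the Boolean lattice of subsets of $Q$ immediately yields $\tau_J = \sum_{L\supset J}\tau_L^\circ$. Restricting to a fixed point $M$ and using Willems' vanishing ($\tau_L^\circ|_M = 0$ unless $L\subset M$), I get
\[
  \tau_J|_M \;=\; \sum_{J\subset L\subset M}\tau_L^\circ|_M,
\]
from which the vanishing $\tau_J|_M = 0$ for $J\not\subset M$ is an empty-sum observation.

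Next I substitute Willems' formula for each $\tau_L^\circ|_M$. The $m$-th factor in the ordered product is $(1-e^{-\alpha_m})r_m$ if $m\in L$ and $e^{-\alpha_m}r_m$ if $m\notin L$. For $m\in J$ the choice is forced; for $m\in M\setminus J$ it is free and independent across positions. Because the entire expression is a composition of per-position operators and composition is bilinear, I distribute the sum into the product position by position, collapsing each free position via
\[
  (1-e^{-\alpha_m})r_m \;+\; e^{-\alpha_m}r_m \;=\; r_m.
\]
The result matches the claimed formula for $\tau_J|_M$ exactly. The stated identity expressing $\tau_J^\circ|_L$ as a signed sum of $\tau$-restrictions follows symmetrically by M\"obius-inverting $[I_{BS^L}] = \sum_{M\subset L}(-1)^{|L\setminus M|}[\calO_{BS^M}]$ to get $\tau_J^\circ = \sum_{L\supset J}(-1)^{|L\setminus J|}\tau_L$ and restricting. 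Nonvanishing at $L = J$ for all three classes is then immediate from the explicit products: each factor equals, respectively, the nonzero root $\beta_j = r_{i_1}\cdots r_{i_{j-1}}(\alpha_{i_j})$ or the nonzero Laurent polynomial $1-e^{-\beta_j}$, and a product of nonzero elements of the polynomial or Laurent ring is nonzero.

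The main obstacle here is not computational---the distributivity is elementary once set up---but the duality bookkeeping: correctly tracking which of $\tau_J,\tau_J^\circ$ is dual to which of $[I_{BS^M}],[\calO_{BS^M}]$ and in which direction the M\"obius inversion runs. The genuine geometric content (isotropy weights on $BS^Q$, localization) has been absorbed into the citations of \cite{WillemsH,WillemsK2}.
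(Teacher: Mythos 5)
Your proof is correct, and it takes a genuinely different route from the paper's.  The paper proves all three restriction formulas from scratch: it defines a candidate $\gamma_V^\circ$ by the claimed point restrictions, pairs it against the $[\calO_{BS^W}]$ via the Woods Hole / Atiyah--Bott localization formula (Proposition~\ref{prop:woodshole}), reduces the computation to a telescoping rational identity (Lemma~\ref{le:rationalsum1}), and then invokes nondegeneracy of the Alexander pairing to conclude $\gamma_V^\circ = \tau_V^\circ$; the $\tau_V$ and $T_V$ formulas are derived afterward, just as you do.  You instead treat the Willems formulas for $T_J|_L$ and $\tau_J^\circ|_L$ as given (the paper itself cites precisely these results in the lemma statement) and obtain $\tau_J|_L$ by M\"obius inversion in the Boolean lattice plus a per-position collapse $(1-e^{-\alpha_m})r_m + e^{-\alpha_m}r_m = r_m$.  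This is clean and fully valid: the ordered product of scalar-times-reflection factors is multilinear in the tuple of coefficients, so the sum over free positions does distribute exactly as you claim.  What the paper's approach buys is self-containedness (and it produces the isotropy-weight computations needed again in \S\ref{se:Restrctionproof}); what yours buys is economy and a transparent reduction of the new claim to the change-of-basis matrix already recalled for $K^T_*(BS^Q)$.

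One remark on the last paragraph: your M\"obius inversion gives the (correct) identity $\tau_J^\circ|_L = \sum_{K\supseteq J}(-1)^{|K\setminus J|}\tau_K|_L$, in which the class varies and the restriction point is fixed.  The display in Lemma~\ref{lemma:Trestriction} instead reads $\sum_{R\subset L}(-1)^{|R\setminus J|}\tau_J|_R$, fixing the class and varying the point.  These are not the same thing, and the one printed in the lemma is in fact off: testing $J=\emptyset$, $L=(s_1)$ gives $\sum_{R\subset L}(-1)^{|R|}\tau_\emptyset|_R = 1-1 = 0$, whereas $\tau_\emptyset^\circ|_{(s_1)} = e^{-\alpha_1}$.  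The ``plainly equivalent formula'' recorded immediately afterward in the paper, $\tau_J^\circ|_L = \sum_{J\subset R\subset L}(-1)^{|R\setminus J|}\prod_{t\in R}(1-e^{(\prod_{m\in L,\, m\le t} r_m)\alpha_t})$, is $\sum_{R}(-1)^{|R\setminus J|}\tau_R|_L$, which agrees with your derivation.  So the subscripts in the lemma's final display appear to be transposed, and your version is the right one; you should still flag that you are proving a corrected form of that auxiliary identity rather than the one as typeset.
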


\junk{Confirmation of the last equality, assuming $J\subset L$:
  \begin{eqnarray*}
  \tau^\circ_J |_S 
  &=& \left(\prod_{m\in L} e^{-\alpha_m [m\notin J]} (1-e^{-\alpha_m})^{[m\in J]} r_m\right)\cdot 1  \\
  &=& \left(\prod_{m\in L} (1-(1-e^{-\alpha_m}))^{[m\notin J]} (1-e^{-\alpha_m})^{[m\in J]} r_m\right)\cdot 1  \\
&=&    \sum_{R:\ J\subset R \subset L} 
\prod_{m\in L} \left( (-(1-e^{-\alpha_m}))^{[m \in R\setminus J]} (1-e^{-\alpha_m})^{[m\in J]} r_m\right)\cdot 1  \\
&=&    \sum_{R:\ J\subset R \subset L} (-1)^{|R\setminus J|}    
\prod_{m\in L} \left( (1-e^{-\alpha_m})^{[m \in R\setminus J]} (1-e^{-\alpha_m})^{[m\in J]} r_m\right)\cdot 1  \\
&=&    \sum_{R:\ J\subset R \subset L} (-1)^{|R\setminus J|}    
\prod_{m\in L} \left( (1-e^{-\alpha_m})^{[m \in R]} r_m\right)\cdot 1  
=    \sum_{R:\ J\subset R \subset L} (-1)^{|R\setminus J|}  \tau_J|_R
  \end{eqnarray*}
}

In proofs we will make use of the {plainly} equivalent formula
\begin{equation*}
\tau_R^\circ |_S = \sum_{J:\ R\subset J \subset S} 
(-1)^{|J\setminus R|} \prod_{t\in J} \left(1-e^{\left(\prod_{m\in S, m\leq t} r_m\right)\alpha_t}\right)
\end{equation*}

\junk{
It follows that, for a fixed reduced word $Q$ for $w$, 
$$
c_{uv}^w =         
\sum_{R,S \subset Q\text{ reduced}\atop \prod R = u, \prod S = v}
\prod_Q \left( \alpha_q^{[q\in R, S]} \partial_q^{\left[q\notin R, S\right]} r_q \right)\cdot 1
$$
A quick comparison to $d_{uv}^w$ shows that $c_{uv}^w = d_{uv}^ww \cdot 1$. The statement of the theorem follows.
}

The proof of this Lemma~\ref{lemma:Trestriction} and Theorem~\ref{th:b-formula} are left to \S\ref{se:Restrctionproof}.

\junk{
We present these coefficients in terms of some apparently natural
families of operators, based on reflections and divided difference operators.
}

\section{AJS/Billey operators}\label{sec:ajsb}

In the next two sections we interpret the AJS/Billey formula,
and Theorem \ref{thm:main}, in terms of certain operators;
our results are that these operators satisfy the various (nil-)Coxeter 
relations. We hope someday to run the arguments backward and use the relations
to give an algebraic proof of Theorem \ref{thm:main}.

We recall the usual statement \cite{AJS,Billey}. Let $Q$ be a word 
in $W$'s generators, with product $w$ (asked in \cite{Billey} to be
reduced, though this is not necessary and is not demanded in 
\cite[Appendix D]{AJS}). Let $\alpha_1,\ldots,\alpha_{\#Q}$ be the
corresponding sequence of simple roots.  
Define an associated sequence of roots
$$ \beta_i := r_{\alpha_1} r_{\alpha_2} \cdots r_{\alpha_{i-1}} \cdot \alpha_i $$
which, {\em if} $Q$ happens to be reduced, are positive roots. 
Then their formula states
$$ S_v|_w 
= \sum \left\{ \prod_R \beta_r\ :\ R\subset Q,\ R
  \text{ is a reduced word for $v$}\right\} 
$$
As a first step in reformulating this operator-theoretically, 
we rewrite it as
$$ S_v|_w 
= \sum_R \left( \prod_Q { \alpha_q}^{[q\in R]} r_q \right) \cdot 1
\qquad \text{again summing over reduced words $R$ for $v$}
$$
To actually evaluate such a formula involves dragging all the reflection
operators $r$ to the right of all the multiplication operators $\alpha$,
thereby turning each $\alpha$ into the corresponding $\beta$ from the
associated sequence. There those reflections multiply back up to $w$,
which {\em without} the action $\cdot\, 1$ gives
an operator equation we record for use below:
\begin{equation}
  \label{eq:AJSBop}
  \sum_R \prod_Q { \alpha_q}^{[q\in R]} r_q
  = S_v|_w\ w
\qquad \text{again summing over reduced words $R$ for $v$}
\end{equation}

\subsection{The operators}\label{ssec:AJSBops}

Let $H^*_T[W]$ be the smash product of $H_T^*$ and the group algebra of $W$, i.e. the free $H^*_T$-module with basis $W$ and multiplication $wp := (w\cdot p) w$.
For each $w\in W$, we introduce an \defn{AJS/Billey operator} 
\begin{equation}
  \label{eq:Jdef}
  \begin{array}{rccc}
    
  J_w := \sum_{v} (S_v|_w) w \ \tensor\ \partial_v 
  \quad \in& H^*_T[W] \tensor_{\ZZ} \ZZ[\partial] 
    &\into& H^*_T[\partial] \tensor_{\ZZ} \ZZ[\partial] \\
    & r_\alpha \tensor p &\mapsto& (1-\alpha \partial_\alpha) \tensor p
  \end{array}
\end{equation}
so in particular
$$ J_\alpha := J_{r_\alpha} = 
(r_\alpha\tensor 1)+(\alpha r_\alpha\tensor\partial_\alpha). $$
Note that these operators are homogeneous of degree $0$, where
the degrees of $\alpha,r_\alpha,\partial_\alpha$ are $+1,0,-1$ respectively.

\junk{
\begin{Theorem}\label{thm:AJSBops}
  \begin{enumerate}
  \item If $Q$ is a reduced word for $w$, then $J_w = \prod_Q J_q$.
  \item If $\ell(w)+\ell(v) = \ell(wv)$, then $J_w J_v = J_{wv}$,
    and this fact is essentially equivalent to the AJS/Billey formula.
  \item $J_\alpha^2 = 1\tensor 1$, so in fact any word $Q$ for $w$ suffices
    in (1), and $J_w J_v = J_{wv}$ for all $w,v$.
  \end{enumerate}
\end{Theorem}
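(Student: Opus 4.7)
\medskip

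\noindent\textbf{Proof plan.} The three parts interlock neatly once one recognizes that the expansion of $\prod_Q J_q$ into monomials is, term-for-term, the AJS/Billey sum.

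For part (1), I would expand the product of $J_q = r_q\tensor 1 + \alpha_q r_q\tensor \partial_q$ over a reduced word $Q = q_1\cdots q_\ell$ of $w$, indexing the resulting $2^{|Q|}$ terms by subsets $R\subset Q$ according to which factor ($\alpha_q r_q\tensor \partial_q$ vs. $r_q\tensor 1$) was chosen at each position:
\begin{equation*}
\prod_Q J_q \;=\; \sum_{R\subset Q}\Bigl(\prod_{q\in Q}\bigl(\alpha_q^{[q\in R]}\,r_q\bigr)\Bigr)\ \tensor\ \prod_{q\in R}\partial_q,
\end{equation*}
where the first factor lives in $H^*_T[W]$ with its smash-product multiplication. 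In $H_T^*[W]$ I rewrite the product as $p_R\cdot(\prod_Q r_q) = p_R\cdot w$, where $p_R\in H_T^*$ is obtained by letting the $r_q$'s act through the $\alpha_q$'s; concretely $p_R = \bigl(\prod_Q \alpha_q^{[q\in R]}\,r_q\bigr)\cdot 1$. On the nil-Hecke side $\prod_{q\in R}\partial_q = \partial_{\prod R}$ when $R$ is reduced and vanishes otherwise. Collecting by $v=\prod R$ and invoking the AJS/Billey formula (stated in the excerpt as the $u=w$ case of Theorem~\ref{thm:main}), $\sum_{R\text{ reduced},\,\prod R=v}p_R = S_v|_w$, yielding $\prod_Q J_q = J_w$.

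Part (2) is then a one-liner: given $\ell(wv)=\ell(w)+\ell(v)$, concatenate reduced words $Q_w,Q_v$ into a reduced word for $wv$ and apply (1) to all three, so $J_w J_v = \bigl(\prod_{Q_w}J_q\bigr)\bigl(\prod_{Q_v}J_q\bigr) = \prod_{Q_w\cdot Q_v} J_q = J_{wv}$. The parenthetical "essentially equivalent to AJS/Billey" is the reverse direction: given $J_wJ_v=J_{wv}$, extracting the coefficient of $w'\tensor\partial_v$ in a telescoped product over a reduced word recovers the AJS/Billey expression, so the two statements determine one another.

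For part (3) I would compute $J_\alpha^2$ directly in $H_T^*[W]\tensor\ZZ[\partial]$:
\begin{equation*}
J_\alpha^2 \;=\; r_\alpha^2\tensor 1 \;+\; (r_\alpha\cdot\alpha r_\alpha + \alpha r_\alpha^2)\tensor \partial_\alpha \;+\; (\alpha r_\alpha)^2\tensor\partial_\alpha^2.
\end{equation*}
Using $r_\alpha^2=1$, $\partial_\alpha^2=0$, and the smash-product identity $r_\alpha\cdot\alpha = -\alpha\cdot r_\alpha$, the middle coefficient collapses to $-\alpha+\alpha=0$ and the last term dies, leaving $J_\alpha^2 = 1\tensor 1$. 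With this in hand, any (possibly non-reduced) word $Q$ for $w$ can be reduced by nil-moves that insert/delete adjacent $J_\alpha J_\alpha$ pairs, so $\prod_Q J_q = J_w$ for any word; and then $J_wJ_v = \prod_{Q_wQ_v}J_q = J_{wv}$ unconditionally.

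The only step with any real content is part (1), and even there the "hard part" has been outsourced to AJS/Billey; the bookkeeping obstacle is simply to keep the smash-product multiplication straight when pushing the $r_q$'s past the $\alpha_q$'s, so that the resulting polynomial is exactly the AJS/Billey summand and not a Weyl-twisted variant.
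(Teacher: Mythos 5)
Your argument matches the paper's proof essentially verbatim: the same term-by-term expansion of $\prod_Q J_q$ over subsets $R\subset Q$, the same observation that $\prod_{q\in R}\partial_q$ vanishes unless $R$ is reduced, the same appeal to the AJS/Billey formula to identify the resulting $H^*_T[W]$ coefficient as $S_v|_w\,w$, the concatenation argument for (2) in both directions, and the direct smash-product check $J_\alpha^2 = 1\otimes 1$ for (3). Your write-up of (1) is slightly more explicit about commuting the $r_q$'s past the $\alpha_q$'s to factor out $w$, but that is merely a presentational choice and the substance coincides with the paper's.
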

}

\begin{Theorem}\label{thm:AJSBops}
  If $Q$ is a word for $w$, then $J_w = \prod_Q J_q$. Hence
  ${J_\alpha}^2 = 1\tensor 1$, and $J_w J_v = J_{wv}\ \forall w,v$.
\end{Theorem}

\begin{proof}
  Let $Q$ be a word for $w$. Then since $S_v|_{r_\alpha}$ is $0$ unless $v=1$ or $v=r_\alpha$,
    \begin{eqnarray*}
      \prod_Q J_q 
      &=& \prod_Q \sum_{v } (S_v|_{r_q}) r_q \tensor \partial_q 
      \quad=\quad \prod_Q 
          \left( (r_q \tensor 1) + (\alpha_q r_q \tensor \partial_q) \right) \\
      &=& \sum_{R\subset Q} \left(\prod_Q \alpha_q^{[q\in R]} r_q\right) 
          \tensor \prod_R \partial_r 
      \quad=\quad 
          \sum_v \sum_{R\subset Q\text{ reduced}\atop \prod R=v} 
                    \left(\prod_Q \alpha_q^{[q\in R]} r_q\right) 
          \tensor \partial_v
     \end{eqnarray*}         
     as $\prod_R \partial_r=0$ unless $R$ is reduced.
     Using equation (\ref{eq:AJSBop}), this becomes
     $\sum_v (S_v|_w\ w) \tensor \partial_v$ or $J_w$.

     Then ${J_\alpha}^2 = J_e = 1\tensor 1$ (which one can check directly),
     and the last claim follows by concatenating words for $w,v$.
\end{proof}

\junk{
\begin{proof}
  \begin{enumerate}
  \item Let $Q$ be a reduced word for $w$. Then since $S_v|_{r_\alpha}$ is $0$ unless $v=1$ or $v=r_\alpha$,
    \begin{eqnarray*}
      \prod_Q J_q 
      &=& \prod_Q \sum_{v } (S_v|_{r_q}) r_q \tensor \partial_q 
      \quad=\quad \prod_Q 
          \left( (r_q \tensor 1) + (\alpha_q r_q \tensor \partial_q) \right) \\
      &=& \sum_{R\subset Q} \left(\prod_Q \alpha_q^{[q\in R]} r_q\right) 
          \tensor \prod_R \partial_r 
      \quad=\quad 
          \sum_v \sum_{R\subset Q\text{ reduced}\atop \prod R=v} 
                    \left(\prod_Q \alpha_q^{[q\in R]} r_q\right) 
          \tensor \partial_v
     \end{eqnarray*}         
as $\prod_R \partial_r=0$ unless $R$ is reduced. The AJS/Billey formula 
 directly implies that
$$
\sum_{R\subset Q\text{ reduced}\atop \prod R=v} \prod_Q \alpha_q^{[q\in R]} r_q
= \sum_{R\subset Q\text{ reduced}\atop \prod R=v}
\left(\prod_{q\in R} \left(\prod_{q'\text{ left of }q} r_{q'}\right)
  \cdot \alpha_q\right) \prod_Q r_q
= S_v|_w\, w,
$$
(as these $\prod_{q\in R} \left(\prod_{q'\text{ left of }q} r_{q'}\right)
  \cdot \alpha_q$ are Billey's $\beta_q$)
from which it follows that 
$$
\prod_Q J_q =  \sum_{v} (S_v|_w) w \tensor \partial_v=J_w.
$$

  \item From (1) the equality $J_w J_v = J_{wv}$  follows
   by concatenating words for $w$ and $v$. \\
    Conversely, the equality implies $J_w = \prod_Q J_q$ when $Q$ is a reduced word for $w$,
    which in turn implies the AJS/Billey formula by the calculation above.
  \item 
    \begin{eqnarray*}
      J_\alpha^2 
  &=& \left( (r_\alpha\tensor 1)+(\alpha r_\alpha\tensor\partial_\alpha)\right)^2
  = \left( (r_\alpha\tensor 1)+(\alpha r_\alpha\tensor\partial_\alpha)\right)
      \left( (r_\alpha\tensor 1)+(\alpha r_\alpha\tensor\partial_\alpha)\right)\\
  &=& (1\tensor 1) 
      + (r_\alpha \alpha r_\alpha \tensor \partial_\alpha)
      + (\alpha \tensor \partial_\alpha)
      + ( \alpha r_\alpha  \alpha r_\alpha \tensor \partial_\alpha^2) = 1\tensor 1
    \end{eqnarray*}
  \end{enumerate}
\end{proof}
}

The corresponding definitions and results in $K$-theory are very similar:

$$ \Xi_w := \sum_{v}  (-1)^{\ell(v)} (\xi_v|_w)\, w \tensor \Demop_v, \qquad
\Xi^\circ_w := \sum_{v} (\xi^\circ_v|_w)w \tensor \Demopisobaric_v $$
(the latter operators are same, 
up to $\delta_\alpha \leftrightarrow r_\alpha \delta_\alpha r_\alpha$
in the second tensor slot, the operators $L(w)$ from 
\cite[Proposition 3.6]{Graham})
so in particular
$$ \Xi_\alpha := \Xi_{r_\alpha} = 
(r_\alpha \tensor 1) 
- ((1-e^{-\alpha})r_\alpha \tensor \Demop_\alpha)
\qquad
\Xi^\circ_\alpha := \Xi^\circ_{r_\alpha} =
(e^{-\alpha} r_\alpha \tensor 1) 
+ ((1-e^{-\alpha})r_\alpha \tensor \Demopisobaric_\alpha)
$$
and when $Q$ is a reduced word for $w$, the Graham/Willems formula gives 
\begin{eqnarray*}
  \prod_Q \Xi_q 
  &=& \prod_Q \left((r_q \tensor 1) -((1-e^{-\alpha_q})r_q \tensor \Demop_q)\right)
  = \sum_{R\subset Q} \left(\prod_Q (-1)^{[q\in R]} (1-e^{-\alpha_q})^{[q\in R]}r_q \right)
      \tensor \prod_R \Demop_r \\
  &=& \sum_v \sum_{R\subset Q\atop \wt\prod R=v}
      \left(\prod_Q(-1)^{[q\in R]} (1-e^{-\alpha_q})^{[q\in R]}r_q \right)
      \tensor \Demop_v
      = \sum_v (-1)^{\ell(v)} (\xi_v|_w) w\tensor \Demop_v = \Xi_w
\end{eqnarray*}
with much the same derivation for $\prod_Q \Xi^\circ_q = \Xi^\circ_w$,
which we omit.
For variety we check $(\Xi^\circ_q)^2 = 1\tensor 1$:
\begin{eqnarray*}
  (\Xi^\circ_q)^2 
  &=& \left( (e^{-\alpha} r_\alpha \tensor 1) 
      + ((1-e^{-\alpha})r_\alpha \tensor \Demopisobaric_\alpha) \right)^2 \\
  &=& \left( (e^{-\alpha} r_\alpha \tensor 1) 
      + ((1-e^{-\alpha})r_\alpha \tensor \Demopisobaric_\alpha) \right)
      \left( (e^{-\alpha} r_\alpha \tensor 1) 
      + ((1-e^{-\alpha})r_\alpha \tensor \Demopisobaric_\alpha) \right) \\
  &=& (e^{-\alpha} r_\alpha \tensor 1) (e^{-\alpha} r_\alpha \tensor 1) 
+ ((1-e^{-\alpha})r_\alpha \tensor \Demopisobaric_\alpha) (e^{-\alpha} r_\alpha \tensor 1) \\
&&+ (e^{-\alpha} r_\alpha \tensor 1) ((1-e^{-\alpha})r_\alpha \tensor \Demopisobaric_\alpha) 
+ ((1-e^{-\alpha})r_\alpha \tensor \Demopisobaric_\alpha) 
((1-e^{-\alpha})r_\alpha \tensor \Demopisobaric_\alpha) \\
  &=& (1\tensor 1) + (e^{\alpha} - 1) \tensor \Demopisobaric_\alpha
      + (e^{-\alpha} - 1)\tensor \Demopisobaric_\alpha
      + (2-e^{-\alpha}-e^\alpha)\tensor \Demopisobaric_\alpha = 1\tensor 1
\end{eqnarray*}
Again, we omit the very similar proof of $\Xi_q^2 = 1\tensor 1$.

We mention here that all our formul\ae\ concerning $\{\xi_v\}$,
especially the ones for $\{a_{uv}^w\}$ and for $\{\Xi_w\}$,
suggest that the proper basis
to consider is not $\{\xi_v\}$ but $\{(-1)^{\ell(v)} \xi_v\}$. 
We forebore doing so for historical reasons. One intriguing aspect of this
alternate basis is that its non-equivariant structure constants are
all nonnegative, instead of (as Brion proved \cite{BrionPositivity},
extending \cite{Buch}) alternating in sign.

\subsection{The class of the diagonal}\label{ssec:diagonal}
Let $(G/B)_\Delta$ denote the diagonal copy of $G/B$ in $(G/B)^2$,
which is invariant under the diagonal $T$-action on $(G/B)^2$. 
The corresponding Poincar\'e dual class
$\Delta_{12} \in H^*_T((G/B)^2)$ 
of this submanifold (assuming $G$ finite-dimensional) can be described
explicitly in terms of the Poincar\'e duals $S^v \in H^*_T(G/B)$ to the $X^v$.
Under the isomorphism
$$
H^*_T((G/B)^2) \cong H^*_T(G/B)\otimes_{H_T^*} H^*_T(G/B)
$$
we have from e.g. \cite[\S 3]{BrionLectures} the factorization of the diagonal
\begin{equation}
  \label{eq:factorizationH}
\Delta_{12} 
 = \sum_v S_v \tensor S^v
\quad = \sum_v S_v \tensor (\partial_v \cdot S^{1})
\end{equation}
Consider its restriction along 
$i_w\times Id:\ \{wB/B\} \times G/B \to (G/B)^2$\, :
$$
(i_w\times Id)^*(\Delta_{12}) = \sum_v S_v|_w \otimes \partial_v\cdot S^1=J_w\cdot(1\otimes S^1).
$$
While we won't directly use this suggestive calculation of the $S_v|_w$,
it will inform a similar operator-theoretic calculation of the 
$c_{uv}^w$ in the next section.
Towards that end we rephrase the equation above 
using the equivariant Euler class $e(T\, G/B)$ of the tangent bundle:
\begin{equation}
  \label{eq:J}
 \left( e(T\, G/B) \tensor 1\right)\ \Delta_{12} 
= \sum_{w\in W} (i_w \times Id)_* \big(J_w \cdot (1 \tensor S^1) \big)
\end{equation}

We sought $K$-theory versions of this, based on
\begin{equation}
  \label{eq:factorizationK}
  \Delta^K_{12}
= \sum_v \xi_v \tensor \xi^v_\circ 
= \sum_v \xi_v \tensor (\Demop_v \cdot \xi^1_\circ)
\quad =\quad
  \sum_v \xi_v^\circ \tensor \xi^v 
= \sum_v \xi_v^\circ \tensor (\Demopisobaric_v \cdot \xi^1),
\end{equation}
but didn't find them.

\junk{

The $K$-theory versions of (\ref{eq:factorizationH}) are
{\color{teal}  Recall:
$$ \Xi_w := \sum_{v}  (-1)^{\ell(v)} (\xi_v|_w)\, w \tensor \Demop_v, \qquad
\Xi^\circ_w := \sum_{v} (\xi^\circ_v|_w)w \tensor \Demopisobaric_v $$
}
whose restrictions along $i_w\times Id$ are \todo{sign issue in first one}
$$ \sum_v (\xi_v|_w) \tensor (\Demop_v \cdot \xi^1_\circ)
= (-1)^{\ell(v)} \Xi_w \cdot (\xi_1 \tensor \xi^1_\circ) 
\qquad
\sum_v (\xi_v^\circ|_w) \tensor (\Demopisobaric_v \cdot \xi^1)
= \Xi^\circ_w \cdot (\xi_1 \tensor \xi^1) 
$$
\todo{AK doesn't think the sign is fixable in the first...?
  Anyway, this is the sign that the $\Demopisobaric$ (not $\Demop$)
  chosen in the $\Xi^\circ_w$ formula is the right one, and vaguely, 
  a reason for it to be so}

\junk{

\subsection{Expanding in $(\partial_u)$ -- will we need this?}

In the next section we'll need a generalization in which the first
tensor factor lives in $H^*_T[\partial]$, not just the subring $H^*_T[W]$. 
To compute the expansion $w = \sum_u m_w^u \partial_u$ of a basis
element from the subring in the basis of the whole, 
we first apply both sides to $S_{v}$
$$  w \cdot S_v  = \sum_u m_w^u \partial_u \cdot S_{v}
      = \sum_{u\atop \ell(uv) = \ell(v)-\ell(u)} m_w^u S_{uv}
$$
then restrict to the basepoint. The LHS becomes
$$ (w\cdot S_v)|_1 = w\cdot(S_v|_{w^{-1}}) $$
whereas the RHS becomes
$$ \left(\sum_{u\atop \ell(uv) = \ell(v)-\ell(u)} m_w^u S_{uv}\right)\Bigg|_1
= \sum_{u\atop \ell(uv) = \ell(v)-\ell(u)} m_w^u (S_{uv}|_1)
= m_w^{v^{-1}} $$
Hence
$w = \sum_u \left(w\cdot (S_{u^{-1}}|_{w^{-1}})\right) \partial_u$,
and $J_w = \sum_{u,v} (S_v|_w) 
\left(w\cdot (S_{u^{-1}}|_{w^{-1}})\right) \partial_u \ \tensor\ \partial_v$.

}
endjunk}

\section{Schubert structure operators}\label{se:structureops}

\subsection{The cohomology operator $L^\alpha$}

Analogous to $J_\alpha \in H^*_T[W] \tensor \ZZ[\partial]$, 
we introduce homogeneous degree $-1$ elements $L^\alpha$ of
$H^*_T[\partial] \tensor \ZZ[\partial] \tensor \ZZ[\partial]$ 
$$
  L^\alpha 
  := (\partial_\alpha r_\alpha \tensor 1 \tensor 1) 
  + (r_\alpha \tensor \partial_\alpha \tensor 1) 
  + (r_\alpha \tensor 1 \tensor \partial_\alpha) 
  + (\alpha r_\alpha \tensor \partial_\alpha \tensor \partial_\alpha)
$$
where $r_\alpha = 1-\alpha\partial_\alpha \in H^*_T[\partial] $.
Their motivation is the following:

\begin{Proposition}\label{prop:Lc}
  Let $Q$ be a reduced word for $w$, and assume Theorem \ref{thm:main}. 
  Also assume $G$ is finite-dimensional,
  so we can define $S^u := w_0 \cdot S_{w_0 u}$. Then
  \begin{equation}
    \label{eq:Lc} \qquad\qquad
    \left(\prod_{q\in Q} L^{\alpha_q}\right) \cdot (S_1\otimes S^1 \otimes S^1) 
    = \sum_{u, v} c_{uv}^w \otimes S^u \otimes S^v
  \end{equation}
\end{Proposition}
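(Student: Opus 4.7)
The plan is to expand the product $\prod_{q\in Q}L^{\alpha_q}$ by distributing over the four-term decomposition of each factor, and to recognize the resulting sum as the right-hand side after applying to $(S_1, S^1, S^1) = (1, S^1, S^1)$ and invoking Theorem~\ref{thm:main}.

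When I distribute the product, a choice of one of the four summands at each position $q$ is the same data as a pair of subwords $P, R \subset Q$: put $q \in P$ when the $q$-th factor contributes $\partial_q$ to the second tensor slot, and $q \in R$ when it contributes $\partial_q$ to the third. The four cases
\[
q \notin P\cup R,\quad q \in P\setminus R,\quad q \in R\setminus P,\quad q \in P\cap R
\]
select respectively $\partial_q r_q \tensor 1 \tensor 1$, $r_q \tensor \partial_q \tensor 1$, $r_q \tensor 1 \tensor \partial_q$, and $\alpha_q r_q \tensor \partial_q \tensor \partial_q$. Gathering factors, the contribution of the pair $(P, R)$ to $\prod_Q L^{\alpha_q}$ is
\[
\left(\prod_Q \alpha_q^{[q\in P\cap R]} \partial_q^{[q\notin P\cup R]} r_q\right) \tensor \left(\prod_{q\in P}\partial_q\right) \tensor \left(\prod_{q\in R}\partial_q\right),
\]
where the first tensor slot is exactly the operator appearing in the first formula of Theorem~\ref{thm:main}.

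Now I apply to $(1, S^1, S^1)$. The first slot produces $\prod_Q(\alpha_q^{[q\in P\cap R]}\partial_q^{[q\notin P\cup R]}r_q)\cdot 1$. For the other two slots, the nil-Hecke product rule from \S1.1 gives $\prod_{q\in P}\partial_q = \partial_{\prod P}$ if $P$ is reduced and $0$ otherwise; combined with the identity $S^v = \partial_v\cdot S^1$ recorded in \eqref{eq:factorizationH} (which is where the finite-dimensional hypothesis on $G$ enters), only pairs $(P, R)$ with both $P$ and $R$ reduced survive, and such a pair contributes $S^{\prod P}\tensor S^{\prod R}$ in the last two slots.

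Regrouping the sum by $u = \prod P$ and $v = \prod R$ yields
\[
\sum_{u,v}\left[\sum_{P,R\subset Q\text{ reduced}\atop \prod P = u,\ \prod R = v} \prod_Q\!\bigl(\alpha_q^{[q\in P\cap R]}\partial_q^{[q\notin P\cup R]}r_q\bigr)\cdot 1\right] \tensor S^u \tensor S^v,
\]
and Theorem~\ref{thm:main} identifies the bracketed inner sum as $c_{uv}^w$, proving \eqref{eq:Lc}. The content is entirely bookkeeping of the four-term expansion against the two auxiliary facts above (the nil-Hecke vanishing and $S^v = \partial_v\cdot S^1$), both already recorded in the paper, so I expect no substantive obstacle; the only point that requires care is keeping track of which positions contribute $\partial_q$ to which tensor slot.
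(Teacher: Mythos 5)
Your proof is correct and is essentially the same expansion the paper uses: distribute the four-term decomposition of each $L^{\alpha_q}$, identify a choice of term at each position with a pair of subwords, use nil-Hecke vanishing to restrict to reduced subwords, and recognize the first tensor slot as the operator from Theorem~\ref{thm:main}. The paper states the conclusion at the operator level and simply observes the match; you additionally spell out the application to $(S_1, S^1, S^1)$ via $S^v = \partial_v\cdot S^1$, which is a harmless elaboration of the same argument.
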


\begin{proof}
  \begin{eqnarray*}
     \prod_{q\in Q} L^{\alpha_q} 
    &=& \sum_{R,S \subset Q}
        \prod_Q \left( \alpha_q^{[q\in R, S]} \partial_q^{\left[q\notin R, S\right]}
        r_q \right) \otimes \prod_{r\in R} \partial_r 
        \otimes \prod_{s\in S} \partial_s \\
    &=& \sum_{u,v} \left( \sum_{R,S\subset Q\text{ reduced}\atop
        \prod R = u, \ \prod S = v}
        \prod_Q \left( \alpha_q^{[q\in R, S]} \partial_q^{\left[q\notin R, S\right]}
        r_q \right) \right) \otimes \partial_u\otimes \partial_v
  \end{eqnarray*}
  The operators in the first tensor slot are the ones appearing in
  Theorem~\ref{thm:main}.
\end{proof}

Since the right side of (\ref{eq:Lc}) doesn't depend on the choice of
reduced word $Q$, this suggests that the operator $\prod_{q\in Q} L^{\alpha_q}$ 
itself might already be independent of $Q$.  We now verify this (partly by
computer\footnote{The Macaulay 2 code is available from
  the second author.}, and only in the simply- and doubly-laced cases).

\begin{Proposition}\label{prop:nilheckeaction}
  Rewrite $L^\alpha$ as $D_\alpha \tensor 1 + J_\alpha \tensor \partial_\alpha$,
  where 
  $$ D_\alpha := -\partial_\alpha \tensor 1 + r_\alpha \tensor \partial_\alpha
  \qquad\qquad\qquad
  J_\alpha := r_\alpha\tensor 1+\alpha r_\alpha\tensor\partial_\alpha $$
  recalling the latter from \S \ref{ssec:AJSBops}.
  Then the map $r_\alpha \mapsto J_\alpha$, $\partial_\alpha \mapsto -D_\alpha$,
  $\alpha \mapsto \alpha \tensor 1$ defines a homomorphism of the
  nil Hecke algebra to $H^*_T[\partial] \tensor_\ZZ \ZZ[\partial]$.
  Under the natural identification
  of $H^*_T[\partial] \tensor_\ZZ \ZZ[\partial]$ with $H^*_T[\partial] \tensor_{H^*_T} H^*_T[\partial]$,  this defines a 
  coproduct on $H^*_T[\partial]$.\footnote{This coproduct is not coassociative in the usual sense, owing to the fact that the base ring $H_T^*$ is not central.}
 
\end{Proposition}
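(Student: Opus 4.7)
The plan is to reduce the homomorphism claim to checking that $D_\alpha$ and the embedding $p \mapsto p \tensor 1$ satisfy the defining relations of $H^*_T[\partial]$. This algebra is presented as generated by $H^*_T$ together with the divided difference operators $\{\partial_\alpha\}$ subject to (i) $\partial_\alpha^2 = 0$, (ii) the braid relations among the $\partial_\alpha$'s, and (iii) the twisted Leibniz rule $\partial_\alpha p = (\partial_\alpha p) + (r_\alpha p) \partial_\alpha$ for $p \in H^*_T$. Once (i)--(iii) are verified for the images $D_\alpha$ and $\alpha\tensor 1$, a homomorphism $\phi$ is determined; consistency with $r_\alpha \mapsto J_\alpha$ then follows from $r_\alpha = 1 - \alpha\partial_\alpha$ by a direct expansion showing $\phi(r_\alpha)$ matches the given formula for $J_\alpha$.

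First I would handle (i) by expanding $D_\alpha^2 = (-\partial_\alpha \tensor 1 + r_\alpha \tensor \partial_\alpha)^2$ and simplifying the four cross-terms via the identities $\partial_\alpha^2 = 0$, $r_\alpha^2 = 1$, $\partial_\alpha r_\alpha = -\partial_\alpha$, and $r_\alpha \partial_\alpha = \partial_\alpha$ in $H^*_T[\partial]$; the terms cancel in pairs. For (iii), by iterated Leibniz it suffices to verify the rule on a simple root $p = \alpha$: direct expansion of $D_\alpha(\alpha \tensor 1)$ and $((r_\alpha\alpha) \tensor 1) D_\alpha$, combined with the identities $\partial_\alpha\alpha = 2$ and $r_\alpha\alpha = -\alpha$, produces the required match.

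The hard part is (ii), the braid relations for the $D_\alpha$. These I would deduce from the braid relations for $L^\alpha$ already proven in Theorem~\ref{thm:main2}. Using the decomposition $L^\alpha = D_\alpha \tensor 1 + J_\alpha \tensor \partial_\alpha$, the braid identity $L^\alpha L^\beta L^\alpha \cdots = L^\beta L^\alpha L^\beta \cdots$ expands in the triple tensor product as a sum indexed by the Demazure product of $\partial$'s chosen in the third tensor slot. Isolating the coefficient of the identity $\partial_e = 1$ in that slot --- which arises only from selecting the summand ``$D \tensor 1$'' in each of the factors --- extracts precisely the braid relation $D_\alpha D_\beta D_\alpha \cdots = D_\beta D_\alpha D_\beta \cdots$. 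Since the braid relations for $L$ are established in the simply-laced (and, by computer, doubly-laced) cases, the same holds for $D$ there.

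For the coproduct statement, $\phi$ is $H^*_T$-linear for the left action on the target via the first tensor factor. Under the natural embedding $\ZZ[\partial] \hookrightarrow H^*_T[\partial]$, the image of $\phi$ lies in the $H^*_T$-balanced subspace of $H^*_T[\partial] \tensor_\ZZ \ZZ[\partial]$, so $\phi$ factors through $H^*_T[\partial] \tensor_{H^*_T} H^*_T[\partial]$; multiplicativity of $\phi$ is then exactly the condition that this defines a coproduct.
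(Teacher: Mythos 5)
Your plan to verify the braid relations for $D_\alpha$ (your step (ii)) is circular as stated. You propose to deduce the braid relations $D_\alpha D_\beta D_\alpha \cdots = D_\beta D_\alpha D_\beta \cdots$ from the braid relations for $L^\alpha$ proven in Theorem \ref{thm:main2}. But look at the paper's proof of those $L^\alpha$ braid relations: it expands $L^\alpha L^\beta L^\alpha$ exactly as you describe, reduces the problem to four identities among $D$'s and $J$'s, and then closes by saying ``whose analogues in the nil Hecke algebra are straightforward to check; \emph{then we apply Proposition \ref{prop:nilheckeaction}}.'' So Theorem \ref{thm:main2}'s braid relation for $L^\alpha$ is proven \emph{using} the very homomorphism you are trying to establish. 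Your extraction of the $D$-braid as the coefficient of $1$ in the third tensor slot is correct as algebra, but it inverts the logical direction and would need an independent verification of the $L^\alpha$ braid relation (e.g.\ a direct computer check in type $A_2$, as the paper does for $B_2$) to break the loop.

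The paper instead treats $r_\alpha$ (not $\partial_\alpha$) as the primary generator: it checks $J_\alpha^2 = 1\tensor 1$, the $H^*_T$-commutation $J_\alpha(\beta\tensor 1) = ((\beta - \langle\alpha,\beta\rangle\alpha)\tensor 1)J_\alpha$, and imports the commutation/braid relations for the $J_\alpha$'s from Theorem \ref{thm:AJSBops}(1) (which rests on the AJS/Billey formula, independent of Theorem \ref{thm:main2}). The relations for $D_\alpha$ are then derived from the implicit definition $(\alpha\tensor 1)D_\alpha = \pm(1\tensor 1 - J_\alpha)$, using that $\alpha\tensor 1$ is a non-zero-divisor. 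Your steps (i) and (iii) — directly checking $D_\alpha^2 = 0$ and the twisted Leibniz rule on a root — are fine, and are arguably a cleaner route than going through the $r$'s, but without an independent source for the $D_\alpha$ braid relations you don't have a proof. The natural fix would be to obtain the $J_\alpha$-braid relations from Theorem \ref{thm:AJSBops}(1) and then derive the $D_\alpha$-braid relations from them (as the paper does), rather than appealing to Theorem \ref{thm:main2}.
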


\begin{proof}
  We need to check that the nil Hecke relations
  $$ r_\alpha^2 = 1 \qquad \alpha \partial_\alpha = 1-r_\alpha \qquad
  r_\alpha( \beta\, p) = \left(\beta - \langle \alpha,\beta\rangle \alpha\right)\, r_\alpha\, p, $$
  and the commutation and braid relations, 
  hold for $J_\alpha,D_\alpha,\alpha\tensor 1,\beta\tensor 1$.
  The squaring relation was in Theorem \ref{thm:AJSBops} and the
  second is very simple. The commutation and braid
  relations for $\{J_\alpha,J_\beta\}$ also follow from Theorem
  \ref{thm:AJSBops}. Once we check the third relation,
  \begin{eqnarray*}
    J_\alpha (\beta \tensor 1) (p_1 \tensor p_2)
    &=& (r_\alpha\tensor 1+\alpha r_\alpha\tensor\partial_\alpha)
        (\beta p_1 \tensor p_2) \\
    &=&         (r_\alpha\beta p_1 \tensor p_2) +
        (\alpha r_\alpha\beta p_1 \tensor \partial_\alpha p_2) \\
    &=&   (\left(\beta - \langle \alpha,\beta\rangle \alpha\right)\, r_\alpha
 p_1 \tensor p_2) +
     (\alpha \left(\beta - \langle \alpha,\beta\rangle \alpha\right)\, r_\alpha
 p_1 \tensor \partial_\alpha p_2) \\
    &=&   (\left(\beta - \langle \alpha,\beta\rangle \alpha\right)\tensor 1)
(r_\alpha \tensor 1 + \alpha r_\alpha \tensor  \partial_\alpha) (p_1 \tensor p_2) \\
    &=& (\left(\beta - \langle \alpha,\beta\rangle \alpha\right)\tensor 1)
        J_\alpha (p_1 \tensor p_2) 
  \end{eqnarray*}
  the commutation and braid relations for the $D_\alpha$ follow from 
  their implicit definition by $(\alpha\tensor 1)D_\alpha = J_\alpha-1$. This coproduct turns out to be the same as in  \cite[\S 2]{BerensteinRichmond}.
\end{proof}

\begin{Lemma}\label{lem:square0}
  $(L^\alpha)^2 = 0$.
\end{Lemma}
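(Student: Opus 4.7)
The plan is to exploit the decomposition $L^\alpha = D_\alpha \otimes 1 + J_\alpha \otimes \partial_\alpha$ supplied by Proposition \ref{prop:nilheckeaction}, so that squaring $L^\alpha$ in the triple tensor product $H^*_T[\partial] \otimes \ZZ[\partial] \otimes \ZZ[\partial]$ becomes a calculation in the smaller algebra $H^*_T[\partial] \otimes \ZZ[\partial]$ in the first two slots, with the third slot simply recording multiplication of $1,\partial_\alpha$ in $\ZZ[\partial]$.

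First I would multiply out, using the componentwise multiplication of the triple tensor,
\begin{align*}
(L^\alpha)^2
&= (D_\alpha \otimes 1 + J_\alpha \otimes \partial_\alpha)(D_\alpha \otimes 1 + J_\alpha \otimes \partial_\alpha) \\
&= D_\alpha^2 \otimes 1 + (D_\alpha J_\alpha + J_\alpha D_\alpha) \otimes \partial_\alpha + J_\alpha^2 \otimes \partial_\alpha^2.
\end{align*}
The last term is killed at once by the nil Hecke relation $\partial_\alpha^2 = 0$ acting in the rightmost slot, so what remains to be shown is $D_\alpha^2 = 0$ and $\{D_\alpha, J_\alpha\} = 0$ as identities in $H^*_T[\partial] \otimes \ZZ[\partial]$.

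The key step is to observe that both identities are the image, under the homomorphism of Proposition \ref{prop:nilheckeaction}, of well-known relations in the nil Hecke algebra itself: namely $\partial_\alpha^2 = 0$ (giving $D_\alpha^2 = 0$) and $\partial_\alpha r_\alpha + r_\alpha \partial_\alpha = 0$ (giving $D_\alpha J_\alpha + J_\alpha D_\alpha = 0$). For the second relation, one checks inside $\ZZ[\partial]$ that $r_\alpha \partial_\alpha = (1 - \alpha\partial_\alpha)\partial_\alpha = \partial_\alpha$ while $\partial_\alpha r_\alpha = \partial_\alpha - \partial_\alpha \alpha \partial_\alpha = \partial_\alpha - (2 - \alpha\partial_\alpha)\partial_\alpha = -\partial_\alpha$, giving the desired anticommutation. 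Applying the homomorphism $\partial_\alpha \mapsto D_\alpha$, $r_\alpha \mapsto J_\alpha$ transports both relations to the claims we need.

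The main (and essentially only) obstacle is justifying that Proposition \ref{prop:nilheckeaction}'s assignment really is an algebra map, but that is already established. Everything else is a one-line consequence, and no braid relations are invoked, so this argument works uniformly and does not require the simply- or doubly-laced hypothesis used later. One could alternatively verify $(L^\alpha)^2 = 0$ by brute expansion of the sixteen cross-terms arising from the original four-summand definition of $L^\alpha$ and grouping them by the exponent of $\partial_\alpha$ appearing in the two rightmost tensor slots; this direct route hides the structural reason but provides a useful sanity check.
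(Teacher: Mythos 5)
Your proposal is correct and takes exactly the paper's route: decompose $L^\alpha = D_\alpha\otimes 1 + J_\alpha\otimes\partial_\alpha$, expand the square, and use Proposition \ref{prop:nilheckeaction} to transport the nil Hecke relations $\partial_\alpha^2 = 0$ and $\partial_\alpha r_\alpha + r_\alpha\partial_\alpha = 0$ to $D_\alpha^2 = 0$ and $\{D_\alpha,J_\alpha\}=0$. The extra lines you include verifying the anticommutation relation inside the nil Hecke algebra are fine (and correct) but are simply supplying detail the paper leaves implicit.
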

This is why in Theorem \ref{thm:main2} we need $Q$ to be a reduced word.

\begin{proof}
  Rewrite $L^\alpha$ as $D_\alpha \tensor 1 + J_\alpha \tensor \partial_\alpha$,
  where $D_\alpha = -\partial_\alpha \tensor 1 + r_\alpha \tensor \partial_\alpha$ 
  and $J_\alpha$ was defined in \S \ref{ssec:AJSBops}
  to be $r_\alpha\tensor 1+\alpha r_\alpha\tensor\partial_\alpha$.
  Now use the abstract nil Hecke relations
  $$ \partial_\alpha^2 = 0 \qquad r_\alpha \partial_\alpha
  + \partial_\alpha r_\alpha = 0 $$
  and Proposition \ref{prop:nilheckeaction} to compute
  \begin{eqnarray*}
    (L^\alpha)^2 &=& (D_\alpha \tensor 1 + J_\alpha \tensor \partial_\alpha)^2 
    = D_\alpha^2\tensor 1 + (D_\alpha J_\alpha + J_\alpha D_\alpha)\tensor
        \partial_\alpha + J_\alpha^2 \tensor (\partial_\alpha)^2 \\
    &=& 0\tensor 1 + 0 \tensor \partial_\alpha + 1 \tensor 0 \qquad =\qquad 0.
  \end{eqnarray*}
\end{proof}

\begin{proof}[Proof of Theorem \ref{thm:main2}, for $L^\alpha$.]
  The commutation relations (that $[L^\alpha,L^\beta]=0$ for $\alpha\perp \beta$) 
  are obvious. For the $A_2$ braiding, we compute
  $L^\alpha L^\beta L^\alpha$ for the simple roots in $SL_3$.

  As in the proof of Lemma \ref{lem:square0}, 
  let $L^\alpha = D_\alpha \tensor 1 + J_\alpha \tensor \partial_\alpha$,
  $L^\beta = D_\beta \tensor 1 + J_\beta \tensor \partial_\beta$.
  Then
  \begin{eqnarray*}
    L^\alpha L^\beta L^\alpha
    &=& (D_\alpha \tensor 1 + J_\alpha \tensor \partial_\alpha)
        (D_\beta \tensor 1 + J_\beta \tensor \partial_\beta)
        (D_\alpha \tensor 1 + J_\alpha \tensor \partial_\alpha)\\
    &=&         (D_\alpha \tensor 1)(D_\beta \tensor 1)(D_\alpha \tensor 1)
        +(D_\alpha \tensor 1)(D_\beta \tensor 1)(J_\alpha \tensor \partial_\alpha)
        +(D_\alpha \tensor 1)(J_\beta \tensor \partial_\beta)(D_\alpha \tensor 1)\\
    &&+(D_\alpha \tensor 1)(J_\beta \tensor \partial_\beta)(J_\alpha \tensor \partial_\alpha)
+(J_\alpha \tensor \partial_\alpha)(D_\beta \tensor 1)(D_\alpha \tensor 1)
+(J_\alpha \tensor \partial_\alpha)(D_\beta \tensor 1(J_\alpha \tensor \partial_\alpha)
\\ &&       
+(J_\alpha \tensor \partial_\alpha)(J_\beta \tensor \partial_\beta)(D_\alpha \tensor 1)
+(J_\alpha \tensor \partial_\alpha)(J_\beta \tensor \partial_\beta)(J_\alpha \tensor \partial_\alpha) \\
    &=&         (D_\alpha D_\beta D_\alpha \tensor 1) 
        +(D_\alpha D_\beta J_\alpha \tensor \partial_\alpha)
        +(D_\alpha J_\beta D_\alpha \tensor \partial_\beta) \\
    &&+(D_\alpha J_\beta J_\alpha \tensor \partial_\beta \partial_\alpha)
+(J_\alpha D_\beta D_\alpha \tensor \partial_\alpha)
+(J_\alpha D_\beta J_\alpha \tensor \partial_\alpha^2)
\\ &&       
+(J_\alpha J_\beta D_\alpha \tensor \partial_\alpha \partial_\beta)
+(J_\alpha J_\beta J_\alpha \tensor \partial_\alpha \partial_\beta \partial_\alpha) \\
    &=&     D_\alpha D_\beta D_\alpha \tensor 1
+  (D_\alpha D_\beta J_\alpha 
        +J_\alpha D_\beta D_\alpha) \tensor \partial_\alpha
        +D_\alpha J_\beta D_\alpha \tensor \partial_\beta  \\
 &&+D_\alpha J_\beta J_\alpha \tensor \partial_\beta \partial_\alpha
   +J_\alpha J_\beta D_\alpha \tensor \partial_\alpha \partial_\beta
   +J_\alpha J_\beta J_\alpha \tensor \partial_\alpha \partial_\beta \partial_\alpha 
  \end{eqnarray*}
We want this to match $L^\beta L^\alpha L^\beta$, whose 
corresponding expansion looks the same, requiring we check the equations
$$
D_\alpha D_\beta D_\alpha = D_\beta D_\alpha D_\beta \quad
  D_\alpha D_\beta J_\alpha 
  +J_\alpha D_\beta D_\alpha = D_\beta J_\alpha D_\beta \quad
  D_\alpha J_\beta J_\alpha = J_\beta J_\alpha D_\beta \quad
J_\alpha J_\beta J_\alpha = J_\beta J_\alpha J_\beta 
$$
whose analogues in the nil Hecke algebra are straightforward to check;
then we apply Proposition \ref{prop:nilheckeaction}.

The corresponding $B_2$ calculation we left to a computer.
\end{proof}

We are confident that the $L^\alpha$ satisfy the $G_2$ braid relation as well, 
but have not done the computation (having run out of memory). 

\junk{

\begin{proof}[Darij Grinberg's alternate proof.]
  After our announcement of these results \cite{FPSAC}, Darij Grinberg
  suggested to us an alternate proof technique, based on two ideas:
  \begin{itemize}
  \item Our algebra 
    $H^*_T[\partial] \tensor \ZZ[\partial] \tensor \ZZ[\partial]$ 
    acts faithfully on $H^*_T(G/B)^{\tensor 3}$, so it's enough to check
    that the actions satisfy these relations.
  \item For $p_1,p_2,p_3 \in H^*_G(G/B)$ so $r_\alpha p_i = p_i$ 
    and $\partial_i p_i = 0$, we have
    \begin{eqnarray*}
      &&L^\alpha((p_1\tensor p_2\tensor p_3)(q_1\tensor q_2\tensor q_3)) 
         = L^\alpha(p_1q_1\tensor p_2q_2\tensor p_3q_3) \\
      &=&
(-\partial_\alpha(p_1q_1)\tensor p_2q_2\tensor p_3q_3) 
+ (r_\alpha(p_1q_1)\tensor \partial_\alpha(p_2q_2)\tensor p_3q_3) \\
&&+ (r_\alpha(p_1q_1)\tensor p_2q_2\tensor \partial_\alpha(p_3q_3)) 
+ (\alpha r_\alpha(p_1q_1)\tensor  \partial_\alpha(p_2q_2)\tensor  \partial_\alpha(p_3q_3)) \\
      &=&
(-p_1 \partial_\alpha(q_1)\tensor p_2q_2\tensor p_3q_3) 
+ (p_1 r_\alpha(q_1)\tensor p_2 \partial_\alpha(q_2)\tensor p_3q_3) \\
&&+ (p_1 r_\alpha(q_1)\tensor p_2q_2\tensor p_3\partial_\alpha(q_3))     
+ (\alpha p_1 r_\alpha(q_1)\tensor  p_2 \partial_\alpha(q_2)\tensor p_3  \partial_\alpha(q_3)) \\
      &=& 
(p_1\tensor p_2\tensor p_3) \ L^\alpha((q_1\tensor q_2\tensor q_3)) 
\end{eqnarray*}
  \end{itemize}
  Consequently, it's enough to check the relations where $q_1,q_2,q_3$ each
  run through an $H^*_G(G/B)$-basis of $H^*_T(G/B)$, which are of size $|W_G|$.

  For example, to check Lemma \ref{lem:square0} take $\{1,x_1\}$ as a basis of 
  $H^*_T(GL_2/B) \iso \ZZ[x_1,x_2,y_1,y_2]/\langle x_1+x_2=y_1+y_2, x_1x_2=y_1y_2\rangle$
  over $\ZZ[y_1,y_2]$. Then there are $2^3$ checks, a typical one being
  \begin{eqnarray*}
    L^\alpha L^\alpha (x_1 \tensor x_1 \tensor 1)
    &=& 
  L^\alpha\big(
   (-\partial_\alpha x_1 \tensor x_1 \tensor 1) 
  + (r_\alpha x_1  \tensor \partial_\alpha x_1  \tensor 1) \\
  &&+ (r_\alpha x_1  \tensor x_1  \tensor \partial_\alpha 1) 
  + (\alpha r_\alpha x_1  \tensor \partial_\alpha x_1  \tensor \partial_\alpha 1)
        \big) \\
    &=& 
  L^\alpha\left(
   (-1 \tensor x_1 \tensor 1) 
  + (x_2  \tensor 1  \tensor 1) + 0 + 0
        \right) \\    
  &=& (0
  + (r_\alpha(-1) \tensor \partial_\alpha x_1 \tensor 1) + 0 + 0)
+ (0 + 0 + (-\partial_\alpha(x_2) \tensor 1 \tensor 1)   + 0) \\
    &=& (-1 \tensor 1 \tensor 1) + (1 \tensor 1 \tensor 1) \qquad = 0
  \end{eqnarray*}
  For the $G_2$ braid relation, this would involve $|W_{G_2}|^3 = 1728$ checks
  (each of, initially, $4^3$ terms). This is likely manageable but we
  didn't pursue it.
\end{proof}

endjunk}

Let $\Delta_{12} \in H^*_T\left( (G/B)^3 \right)$ denote the Poincar\'e dual of
the partial diagonal 
$\{(F_1,F_2,F_3) \in (G/B)^3\ :\ F_1 = F_2\}$,
and $\Delta_{13}$ denote that of $\{(F_1,F_2,F_3) \in (G/B)^3\ :\ F_1 = F_3\}$ 
likewise. Then $\Delta_{123} := \Delta_{12} \cap \Delta_{13}$ is the class of the full diagonal.
By two applications of Equation (\ref{eq:factorizationH}), we get
\begin{eqnarray*}
  \Delta_{123} &=& \Delta_{12} \cap \Delta_{23} 
 = \left(\sum_u (S_u \tensor S^u \tensor 1)\right)
\left(\sum_v ( S_v \tensor 1 \tensor S^v)\right) 
 = \sum_{u,v} S_u S_v \tensor S^u \tensor S^v
\\ &=& \sum_{u,v} \left( \sum_w c_{uv}^w S_w \right) \tensor S^u \tensor S^v 
 =  \sum_w (S_w \tensor 1\tensor 1) 
       \sum_{u,v} \left( c_{uv}^w \tensor S^u \tensor S^v \right)
\end{eqnarray*}
Combined with (\ref{eq:Lc}), we get 
\begin{equation}
  \label{eq:H123}
    \Delta_{123} = \sum_w (S_w \tensor 1\tensor 1)\ L^w(S_1\otimes S^1 \otimes S^1),
\end{equation}
a distinct echo of Equations (\ref{eq:factorizationH}) and (\ref{eq:J}).

{\em Question.} What is a closed form for $L^w := \prod_{q\in Q} L^{\alpha_q}$, 
analogous to that of $J^w$ in (\ref{eq:Jdef})? 

\subsection{The $K$-theory operators
  $\Lambda^\alpha,\, \Lambda_\circ^\alpha$}

The analogue of Proposition \ref{prop:Lc} for the $K$-theoretic
structure constants $\acirc_{uv}^w$ requires the operators
\begin{eqnarray*}
  \Lambda^\alpha_\circ
  &:=& \ema \ra (-\da)  \tensor 1 \tensor 1
  + \ema \ra \tensor \da \tensor 1
  + \ema \ra \tensor 1 \tensor \da
  + (1-\ema) \ra \tensor \da \tensor \da 
\junk{
  &=& E^\circ_\alpha \tensor 1
      + \Xi_\alpha^\circ \tensor \da \\
\text{where } E^\circ_\alpha 
  &:=& \ema \ra (-\da)  \tensor 1 + \ema \ra \tensor \da, \quad
       \Xi_\alpha^\circ = \ema \ra \tensor 1 + (1-\ema) \ra \tensor \da
       \text{ from \S \ref{ssec:AJSBops}}
}
\end{eqnarray*}
read off our $\{\acirc_{uv}^w\}$ formula in Theorem \ref{thm:main}, 
through considering the cases $q$ in neither of $P,R$, in just one, or in both.

To recover analogously the $a_{uv}^w$ as coefficients of an operator product,
we need to subsume the $P,R$-dependent signs into the same
four cases. 
\begin{eqnarray*}
  a_{uv}^w &=& (-1)^{\ell(u)+\ell(v)-\ell(w)} \\
           && \sum_{P,R\subset Q\atop \wt\prod P = u, \ \wt\prod R = v}
              \prod_{q\in Q} \left( (-1)^{[q \in P\cap R]} (-1)^{[q\notin P\cup R]}
              (e^{\alpha_q})^{[q\notin {P\cup R}]}  (1-e^{-\alpha_q})^{[q\in P\cap R]} 
              r_q (-\Demop_q)^{[q\notin {P}\cup {R}]}\right) \cdot 1
\end{eqnarray*}
Extracting those cases, we define
\begin{eqnarray*}
     \Lambda^\alpha
  &:=& e^{\alpha} r_\alpha \Demop_\alpha \tensor 1 \tensor 1
  + r_\alpha \tensor \Demop_\alpha \tensor 1
  + r_\alpha \tensor 1 \tensor \Demop_\alpha 
  - (1-e^{-\alpha}) r_\alpha \tensor \Demop_\alpha \tensor \Demop_\alpha 
\\
  &=& - \bda \ra \tensor 1 \tensor 1
  + \ra \tensor \bda \tensor 1
    + \ra \tensor 1 \tensor \bda
   +(\ema-1) \ra \tensor \bda \tensor \bda        
\\
\junk{
  &=& E_\alpha \tensor 1 + \Xi_\alpha \tensor \bda \\
\text{where } E_\alpha 
  &:=& - \bda \ra \tensor 1 +  \ra \tensor \bda,
       \quad
       \Xi_\alpha = \ra \tensor 1   +(\ema-1) \ra \tensor \bda
       \text{ from \S \ref{ssec:AJSBops}}
}
\end{eqnarray*}

We didn't find a zero-Hecke version of Proposition \ref{prop:nilheckeaction}
with which to study $\Lambda^\alpha, \Lambda^\alpha_\circ$.

\begin{proof}[Proof of Theorem \ref{thm:main2}, 
  for $\Lambda^\alpha, \Lambda^\alpha_\circ$.]
We check the two $K$-analogues of Lemma \ref{lem:square0}:

\begin{eqnarray*}
  (\Lambda^\alpha)^2
  &=&
  + (\bda \ra \tensor 1 \tensor 1)(\bda \ra \tensor 1 \tensor 1)
  - (\bda \ra \tensor 1 \tensor 1)(\ra \tensor \bda \tensor 1) \\  && 
  - (\bda \ra \tensor 1 \tensor 1)(\ra \tensor 1 \tensor \bda)
  - (\bda \ra \tensor 1 \tensor 1)((\ema-1) \ra \tensor \bda \tensor \bda)
\\ &&  
  - (\ra \tensor \bda \tensor 1)  (\bda \ra \tensor 1 \tensor 1)
  + (\ra \tensor \bda \tensor 1)  (\ra \tensor \bda \tensor 1)  \\ &&  
  + (\ra \tensor \bda \tensor 1)  (\ra \tensor 1 \tensor \bda)
  + (\ra \tensor \bda \tensor 1)  ((\ema-1) \ra \tensor \bda \tensor \bda)
\\ &&  %
  - (\ra \tensor 1 \tensor \bda)(\bda \ra \tensor 1 \tensor 1)
  + (\ra \tensor 1 \tensor \bda)(\ra \tensor \bda \tensor 1)  \\ &&  
  + (\ra \tensor 1 \tensor \bda)(\ra \tensor 1 \tensor \bda)
  + (\ra \tensor 1 \tensor \bda)((\ema-1) \ra \tensor \bda \tensor \bda)
\\ &&  %
  - ((\ema-1) \ra \tensor \bda \tensor \bda)(\bda \ra \tensor 1 \tensor 1)
  + ((\ema-1) \ra \tensor \bda \tensor \bda)(\ra \tensor \bda \tensor 1)\\ &&  
  + ((\ema-1) \ra \tensor \bda \tensor \bda)(\ra \tensor 1 \tensor \bda)
  + ((\ema-1) \ra \tensor \bda \tensor \bda)((\ema-1) \ra \tensor \bda \tensor \bda)
\end{eqnarray*}
\begin{eqnarray*}
    &=&
  + (\bda \ra \bda \ra \tensor 1 \tensor 1)
  - (\bda \ra \ra \tensor \bda \tensor 1) 
  - (\bda \ra \ra \tensor 1 \tensor \bda)
  - (\bda \ra (\ema-1) \ra \tensor \bda \tensor \bda)
\\ &&  
  - (\ra \bda \ra\tensor \bda \tensor 1)
  + (\ra \ra \tensor \bda \tensor 1)  
  + (\ra \ra \tensor \bda \tensor \bda) 
  + (\ra (\ema-1) \ra \tensor \bda \tensor \bda) 
\\ &&  %
  - (\ra \bda \ra \tensor 1 \tensor \bda)
  + (\ra \ra \tensor \bda \tensor \bda)
  + (\ra \ra \tensor 1 \tensor \bda)
  + (\ra (\ema-1) \ra \tensor \bda \tensor \bda)
\\ &&  %
  - ((\ema-1) \ra \bda \ra \tensor \bda \tensor \bda)
  + ((\ema-1) \ra \ra \tensor \bda \tensor \bda) \\ &&
  + ((\ema-1) \ra \ra \tensor \bda \tensor \bda)
  + ((\ema-1) \ra (\ema-1) \ra \tensor \bda \tensor \bda)
\end{eqnarray*}
which becomes, using $\ra \bda \ra = -\ema \bda$, 
\begin{eqnarray*}
    &=&
  + (- \bda \ema  \bda \tensor 1 \tensor 1)
  - (\bda \tensor \bda \tensor 1) 
  - (\bda \tensor 1 \tensor \bda)
  - (\bda (\ea-1) \tensor \bda \tensor \bda)
\\ &&  
  + (\ema \bda \tensor \bda \tensor 1)
  + (1 \tensor \bda \tensor 1)  
  + (1 \tensor \bda \tensor \bda) 
  + ((\ea-1) \tensor \bda \tensor \bda) 
\\ &&  %
  + (\ema \bda \tensor 1 \tensor \bda)
  + (1 \tensor \bda \tensor \bda)
  + (1 \tensor 1 \tensor \bda)
  + ((\ea-1) \tensor \bda \tensor \bda)
\\ &&  %
  + ((\ema-1) \ema \bda \tensor \bda \tensor \bda)
  + ((\ema-1) \tensor \bda \tensor \bda) \\ &&
  + ((\ema-1) \tensor \bda \tensor \bda)
  + ((\ema-1) (\ea-1) \tensor \bda \tensor \bda)
\end{eqnarray*}
\begin{eqnarray*}
    &=&
  - \bda \ema  \bda \tensor 1 \tensor 1
        \quad
  + (-\bda   + \ema \bda   + 1 )\tensor \bda \tensor 1
        \quad
  + (-\bda   + \ema \bda  + 1 )\tensor 1 \tensor \bda 
\\ &&
  + \big( \bda (1-\ea) 
  + 1 
  + (\ea-1) 
  + 1 
  + (\ea-1) 
  + (\ema-1) \ema \bda \\ &&
  + (\ema-1) 
  + (\ema-1) 
  + (\ema-1) (\ea-1) \big) \tensor \bda \tensor \bda \\
    &=&
  - \bda \ema  \bda \tensor 1 \tensor 1
        \quad
  + (-\bda   + \ema \bda   + 1 )\tensor \bda \tensor 1
        \quad
  + (-\bda   + \ema \bda  + 1 )\tensor 1 \tensor \bda 
\\ &&
  + \big( \bda (1-\ea) + \ea  + (\ema-1) \ema \bda + \ema
 \big) \tensor \bda \tensor \bda
\end{eqnarray*}
and then using $\bda \ema \bda = -\bda$, \ \ 
 $\bda (1-\ea) + \ea  + (\ema-1) \ema \bda + \ema = (\ema-1)\ra$,
and $-\bda + \ema \bda + 1 = \ra$ 
we reduce to $\Lambda^\alpha$.

The other $K$-analogue of Lemma \ref{lem:square0}, for
$$  \Lambda^\alpha_\circ
  := \ema \ra (-\da)  \tensor 1 \tensor 1
  + \ema \ra \tensor \da \tensor 1
  + \ema \ra \tensor 1 \tensor \da
  + (1-\ema) \ra \tensor \da \tensor \da $$
is proven by a very similar check:
\begin{eqnarray*}
&&  (\Lambda^\alpha_\circ)^2 \\
  &=& 
  + (-\ema \ra \da\tensor 1\tensor 1)(-\ema \ra \da\tensor 1\tensor 1)
  + (-\ema \ra \da\tensor 1\tensor 1)(\ema \ra\tensor \da\tensor 1)\\ &&
  + (-\ema \ra \da\tensor 1\tensor 1)(\ema \ra\tensor 1\tensor \da)
  + (-\ema \ra \da\tensor 1\tensor 1)((1-\ema) \ra\tensor \da\tensor \da)
\\ &&  %
  + (\ema \ra\tensor \da\tensor 1)(-\ema \ra \da\tensor 1\tensor 1)
  + (\ema \ra\tensor \da\tensor 1)(\ema \ra\tensor \da\tensor 1)\\ &&
  + (\ema \ra\tensor \da\tensor 1)(\ema \ra\tensor 1\tensor \da)
  + (\ema \ra\tensor \da\tensor 1)((1-\ema) \ra\tensor \da\tensor \da)
\\ &&  %
  + (\ema \ra\tensor 1\tensor \da)(-\ema \ra \da\tensor 1\tensor 1)
  + (\ema \ra\tensor 1\tensor \da)(\ema \ra\tensor \da\tensor 1)\\ &&
  + (\ema \ra\tensor 1\tensor \da)(\ema \ra\tensor 1\tensor \da)
  + (\ema \ra\tensor 1\tensor \da)((1-\ema) \ra\tensor \da\tensor \da)
\\ &&  %
  + ((1-\ema) \ra\tensor \da\tensor \da)(-\ema \ra \da\tensor 1\tensor 1)
  + ((1-\ema) \ra\tensor \da\tensor \da)(\ema \ra\tensor \da\tensor 1)\\ &&
  + ((1-\ema) \ra\tensor \da\tensor \da)(\ema \ra\tensor 1\tensor \da)
  + ((1-\ema) \ra\tensor \da\tensor \da)((1-\ema) \ra\tensor \da\tensor \da)
\end{eqnarray*}
\begin{eqnarray*}
    &=& 
  + (\ema \ra \da \ema \ra \da\tensor 1\tensor 1)
  - (\ema \ra \da\ema \ra\tensor \da\tensor 1)\\ &&
  - (\ema \ra \da\ema \ra\tensor 1\tensor \da)
  - (\ema \ra \da(1-\ema) \ra\tensor \da\tensor \da)
\\ &&  %
  - (\ema \ra \ema \ra \da\tensor \da\tensor 1)
  + (\ema \ra\ema \ra\tensor \da\tensor 1)\\ &&
  + (\ema \ra\ema \ra\tensor \da\tensor \da)
  + (\ema \ra(1-\ema) \ra\tensor \da\tensor \da)
\\ &&  %
  - (\ema \ra\ema \ra \da\tensor 1\tensor \da)
  + (\ema \ra\ema \ra\tensor \da\tensor \da)\\ &&
  + (\ema \ra\ema \ra\tensor 1\tensor \da)
  + (\ema \ra(1-\ema) \ra\tensor \da\tensor \da)
\\ &&  %
  - ((1-\ema) \ra \ema \ra \da\tensor \da\tensor \da)
  + ((1-\ema) \ra\ema \ra\tensor \da\tensor \da)\\ &&
  + ((1-\ema) \ra\ema \ra\tensor \da\tensor \da)
  + ((1-\ema) \ra(1-\ema) \ra\tensor \da\tensor \da)
\end{eqnarray*}
\begin{eqnarray*}
    &=& 
  + (\ema \ra \da \ema \ra \da\tensor 1\tensor 1)
  - (\ema \ra \da\ema \ra\tensor \da\tensor 1)\\ &&
  - (\ema \ra \da\ema \ra\tensor 1\tensor \da)
  - (\ema \ra \da(1-\ema) \ra\tensor \da\tensor \da)
\\ &&  %
  - ( \da\tensor \da\tensor 1)
  + (1\tensor \da\tensor 1)
  + (1\tensor \da\tensor \da)
  + (\ema (1-\ea)\tensor \da\tensor \da)
\\ &&  %
  - ( \da\tensor 1\tensor \da)
  + (1\tensor \da\tensor \da)
  + (1\tensor 1\tensor \da)
  + (\ema (1-\ea)\tensor \da\tensor \da)
\\ &&  %
  - ((1-\ema)  \ea \da\tensor \da\tensor \da)
  + ((1-\ema) \ea \tensor \da\tensor \da)\\ &&
  + ((1-\ema) \ea\tensor \da\tensor \da)
  + ((1-\ema) (1-\ea)\tensor \da\tensor \da)
\end{eqnarray*}
\begin{eqnarray*}
    &=& 
  + \ema \ra \da \ema \ra \da\tensor 1\tensor 1
\quad + \quad
  (- \ema \ra \da\ema \ra  -  \da  + 1)
      \tensor (\da\tensor 1 + 1\tensor \da)
\\ &&
  (- \ema \ra \da(1-\ema) \ra   + \ema    - (\ea-1) \da   + \ea )
   \tensor \da\tensor \da
\end{eqnarray*}
Using $\ra \da \ema \ra \da = - \ra \da$, 
$- \ema \ra \da(1-\ema) \ra   + \ema    - (\ea-1) \da   + \ea = (1-\ema)\ra$,
and 
$- \ema \ra \da\ema \ra  -  \da  + 1 = \ra$
we reduce the above to
$$ = - \ema \ra \da \tensor 1 \tensor 1  
+ \ra  \tensor (\da \tensor 1 + 1 \tensor \da)
+ (1-\ema)\ra \tensor \da \tensor \da \quad =\quad \Lambda^\alpha_\circ. $$
In particular, the associated graded of this equation w.r.t. the
$\langle \{1-e^\lambda\ :\ \lambda\in T^*\} \rangle$-adic filtration gives
another proof of Lemma \ref{lem:square0}.

These equations $(\Lambda^\alpha)^2 = \Lambda^\alpha,
(\Lambda^\alpha_\circ)^2 = \Lambda^\alpha_\circ$ are what imply the
last sentence of Theorem \ref{thm:main2}, allowing for $Q$ having the
correct Demazure product instead of necessarily being reduced.

\junk{
\begin{eqnarray*}
   \ra \da \ema \ra \da f 
  &=& \ra \da \ema \ra \frac{f-\ema rf}{1-\ema} \\
  &=& \ra \da \ema \frac{r f-\ea f}{1-\ea} \\
  &=& \ra \da \frac{ \ema r f- f}{1-\ea} \\
  &=& \ra \frac{1}{1-\ema} \left( \frac{ \ema r f- f}{1-\ea} - \ema r \frac{ \ema r f- f}{1-\ea} \right) \\
  &=& \ra \frac{1}{1-\ema} \left( -\ema \frac{ \ema r f- f}{1-\ema} - 
\ema \frac{ \ea  f- r f}{1-\ema} \right) \\
  &=& \ra \frac{-\ema}{1-\ema} \left(  \frac{ \ema r f- f}{1-\ema} +
 \frac{ \ea  f- r f}{1-\ema} \right) \\
  &=& \ra \frac{-\ema}{1-\ema} \left(  
f \frac{-1 + \ea}{1-\ema}
+ rf \frac{\ema - 1}{1-\ema} \right) \\
  &=& \ra \frac{-\ema}{1-\ema} \left(  f \ea - rf  \right) \\
  &=& \ra \frac{-1}{1-\ema} \left(  f - \ema rf  \right) \\
  &=& - \ra \da f
\end{eqnarray*}
}


\junk{
\begin{eqnarray*}
  (- \ema \ra \da\ema \ra  -  \da  + 1)f
  &=& -\ema \ra \frac{\ema\ra f - \ema \ra \ema \ra f}{1-\ema}
      - \frac{f - \ema \ra f}{1-\ema} + f \\
  &=& \ema \frac{ f - \ema r f}{1-\ema}
      - \frac{f - \ema r f}{1-\ema} + f \\
  &=& (\ema-1)\da f + f = (rf-f)+f = rf
\end{eqnarray*}
}

\junk{
\begin{eqnarray*}
&&    (- \ema \ra \da(1-\ema) \ra   + \ema    - (\ea-1) \da   + \ea )f \\
&=& (- \ema \ra \da(1-\ema) \ra   + \ema    - \ea (1-\ema) \da   + \ea )f \\
&=& (- \ema \ra \da(1-\ema) \ra   + \ema    - \ea (1 - \ema r)   + \ea )f \\
&=& (- \ema  d_{-\alpha} (1-\ea)    + \ema    +   r    )f \\
&=& - \ema  \frac{ (1-\ea)f - \ea \ra (1-\ea)f}{1-\ea}    + \ema f   +   r f \\
&=& - \ema  \frac{ (1-\ea)f - (\ea-1)\ra f}{1-\ea}    + \ema f   +   r f \\
&=& - \ema  (f + rf)    + \ema f   +   r f \\
&=& - \ema  ( rf)      +   r f  = (1-\ema)rf
\end{eqnarray*}
}

We checked the simply-laced braid relations
for $\Lambda^\alpha, \Lambda^\alpha_\circ$ (each involving
$>$5,000 terms) by computer, using the relations
$$
\begin{array}{cccc}
\Demopisobaric_\alpha e^\alpha = e^{-\alpha} \Demopisobaric_\alpha + e^\alpha + 1 
&
\quad
\Demopisobaric_\alpha e^\beta = e^\alpha e^\beta \Demopisobaric_\alpha - e^\alpha e^\beta
&
\quad \text{likewise with}
\\ 
\Demopisobaric_\alpha e^{-\alpha} = e^\alpha \Demopisobaric_\alpha - e^\alpha - 1
&
\quad
\Demopisobaric_\alpha e^{-\beta} = e^{-\alpha} e^{-\beta} \Demopisobaric_\alpha + e^{-\beta}    
&
\quad \alpha\leftrightarrow \beta \text{ and/or } \Demopisobaric_\alpha \leftrightarrow \Demop_\alpha
\end{array}
$$
and we likewise conjecture that the doubly- and triply-laced braid relations 
also hold. 
\end{proof}

It seems plausible that $\Lambda^w, \Lambda^w_\circ$ are adjoint in some sense
(much like $\Demop_w, w_0 \Demopisobaric_w w_0$ can be seen to be adjoint),
allowing one to prove $(\Lambda_\circ^\alpha)^2 = \Lambda_\circ^\alpha$
from $(\Lambda^\alpha)^2 = \Lambda^\alpha$.

By two applications of Equation (\ref{eq:factorizationK}) version I, we get
\begin{eqnarray*}
  \Delta_{123}^K &=& \Delta_{12}^K \cap \Delta_{23}^K 
 = \left(\sum_u (\xi^\circ_u \tensor \xi^u \tensor 1)\right)
\left(\sum_v ( \xi^\circ_v \tensor 1 \tensor \xi^v)\right) 
 = \sum_{u,v} \xi^\circ_u \xi^\circ_v \tensor \xi^u \tensor \xi^v
\\ &=& \sum_{u,v} \left( \sum_w \acirc_{uv}^w \xi^\circ_w \right) \tensor \xi^u \tensor \xi^v 
 =  \sum_w (\xi^\circ_w \tensor 1\tensor 1) 
       \sum_{u,v} \left( \acirc_{uv}^w \tensor \xi^u \tensor \xi^v \right) \\
 &=& \sum_w (\xi^\circ_w \tensor 1\tensor 1) \
     \Lambda^w_\circ (\xi_1 \tensor \xi^1 \tensor \xi^1)
\end{eqnarray*}
It is curious that in this $K$-theoretic analogue of 
Equation (\ref{eq:H123}) the input to the $\Lambda^w_\circ$ has no
ideal sheaf classes.

\section{A recursive formula 
  for $H^*_T$ structure constants}
\label{sec:recursive}

\begin{Corollary}
Fix a reflection $r_\alpha$, and
let $\overline{s}$ denote $r_\alpha s$ for $s\in W$. If $\overline{w}<w$, then
$$
c_{uv}^w
\quad=\quad
(\partial_\alpha r_\alpha) \cdot c_{uv}^{\overline{w}} 
\quad+\quad
[\overline{u} < u] c_{\overline{u}, v}^{\overline{w}} 
\quad+\quad   
[\overline{v} < v] c_{u, \overline{v}}^{\overline{w}} 
\quad+\quad  
[\overline{u} < u] [\overline{v} < v]\, \alpha\, c_{\overline{u},\overline{v}}^{\overline{w}}
$$
where $[\overline{s} < s]$ indicates $1$ if $\overline{s} < s$, and
$0$ otherwise (i.e. $\overline{s} > s$).

\end{Corollary}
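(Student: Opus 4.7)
The plan is to derive the recursion directly from Theorem~\ref{thm:main}, re-indexing the sum according to the initial letter of a cleverly chosen reduced word. Since $\overline{w} < w$ is equivalent to $r_\alpha$ being a left descent of $w$, we can select a reduced word $Q = r_\alpha \cdot Q'$ for $w$ in which $Q'$ is a reduced word for $\overline{w}$. Every reduced subword $P \subset Q$ with $\prod P = u$ decomposes uniquely as $P = P_0 \cdot P'$, where $P_0 \in \{r_\alpha, -\}$ records whether position $1$ is selected and $P' \subset Q'$, and similarly $R = R_0 \cdot R'$. This partitions the sum of Theorem~\ref{thm:main} into four sub-sums indexed by $(P_0, R_0) \in \{-, r_\alpha\}^2$.

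In each sub-sum, the contribution from position $1$ is one of the three canonical factors dictated by the indicator rule of Theorem~\ref{thm:main}: the factor is $\partial_\alpha r_\alpha$ when $(P_0, R_0) = (-,-)$ (so $q \notin P \cup R$), a bare $r_\alpha$ when exactly one of $P_0, R_0$ equals $r_\alpha$ (so $q$ lies in the union but not the intersection), and $\alpha r_\alpha$ when $(P_0, R_0) = (r_\alpha, r_\alpha)$ (so $q \in P \cap R$). The remaining product over $Q'$ is precisely the Theorem~\ref{thm:main} expression for a structure constant on $\overline{w}$: the lower index is $u$ if $P_0 = -$ and $\overline{u}$ if $P_0 = r_\alpha$, with the analogous statement for $R_0$ and $v$, $\overline{v}$. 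Moreover, reducibility of $P$ when $P_0 = r_\alpha$ requires $\ell(\overline{u}) = \ell(u)-1$, i.e.\ $\overline{u} < u$; this is how the indicator $[\overline{u} < u]$ enters, and the indicator $[\overline{v} < v]$ arises symmetrically. The sub-sum $(r_\alpha, r_\alpha)$ is governed by both indicators simultaneously.

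Assembling these four sub-sums gives the stated recursion: the $(-,-)$ case produces the $(\partial_\alpha r_\alpha) \cdot c_{uv}^{\overline{w}}$ term, the cases $(r_\alpha, -)$ and $(-, r_\alpha)$ produce the two middle terms guarded by the descent indicators, and $(r_\alpha, r_\alpha)$ produces the final $\alpha$-multiplied term. There is no geometric input beyond Theorem~\ref{thm:main}, and the entire argument is an indexing rearrangement together with checking how each initial operator factor interacts with the polynomial output of the corresponding $Q'$-subproduct applied to $1 \in H^*_T$. The main obstacle is purely bookkeeping: tracking the reducibility constraints in the four cases and reconciling the leading operator factors with the ``operator-free'' form of the right-hand side via the nil Hecke relations $r_\alpha = 1 - \alpha \partial_\alpha$ and $\partial_\alpha r_\alpha = -r_\alpha \partial_\alpha$.
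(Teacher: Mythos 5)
Your decomposition of the Theorem~\ref{thm:main} sum --- choose $Q=r_\alpha Q'$ with $Q'$ reduced for $\overline w$, and split over whether position~$1$ lies in $P$, in $R$, in both, or in neither --- is sound, and it is in substance the same argument the paper makes: the paper packages the identical case split as the operator factorization $L^w=L^\alpha L^{\overline w}$ (via Proposition~\ref{prop:Lc}) and then matches coefficients of $\partial_u\otimes\partial_v$, which is your re-indexing plus descent-indicator bookkeeping in disguise.

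The gap is in the assembly. Each of the last three cases carries a \emph{leading} $r_\alpha$ from position~$1$: what the four sub-sums actually produce is
\[
(\partial_\alpha r_\alpha)\, c_{uv}^{\overline w}
\;+\; [\,\overline u<u\,]\, r_\alpha\, c_{\overline u,v}^{\overline w}
\;+\; [\,\overline v<v\,]\, r_\alpha\, c_{u,\overline v}^{\overline w}
\;+\; [\,\overline u<u\,][\,\overline v<v\,]\, \alpha r_\alpha\, c_{\overline u,\overline v}^{\overline w},
\]
and your claim that these leading operator factors can be reconciled with the operator-free right-hand side via nil Hecke relations is not justified and in fact cannot be: $r_\alpha$ does not act trivially on these structure constants. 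Concretely, in $GL_3$ take $u=r_2r_1$, $v=r_1r_2$, $w=w_0$, $\alpha=\alpha_1$. Then $\overline w=r_2r_1$, $[\,\overline u<u\,]=0$, $[\,\overline v<v\,]=1$, $c_{uv}^{\overline w}=0$, and $c_{u,\overline v}^{\overline w}=S_{r_2}|_{r_2r_1}=\alpha_2$, while a direct Theorem~\ref{thm:main} computation gives $c_{uv}^{w_0}=\alpha_1+\alpha_2$. The display above (with the $r_\alpha$) evaluates to $0+r_1(\alpha_2)=\alpha_1+\alpha_2$, as it must; the Corollary as printed evaluates to $\alpha_2$. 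The published statement thus appears to be missing the $r_\alpha$'s on its last three terms --- indeed the paper's own proof carries them correctly through its penultimate display and then silently drops them, and its two worked checks happen to involve only $c$-values equal to $0$ or $1$, which $r_\alpha$ fixes, so the slip goes undetected there. Your sketch reproduces the paper's derivation faithfully, but the ``reconciliation'' step you defer as bookkeeping is exactly where the argument fails to reach the formula as stated; the honest conclusion of your computation is the corrected recursion with the $r_\alpha$'s restored.
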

We thank a referee for noting that a similar statement up to  left-right multiplication is found on \cite[page 219]{KostantKumarH}, and a $K$-theoretic version on \cite[page 565]{KostantKumarK}.

\begin{proof} 
For ease of notation, we use 
$$
\hat{c}_{uv}^w := \sum_{R, S\subset Q}
\prod_{q\in Q} \alpha_q^{[q\in R\cap S]}\partial_q^{[q\not\in R\cup S]} r_q,
$$ where the sum is over reduced words $R, S$ such that $ \prod R=u, \prod S =v$. Note that $c_{uv}^w = \hat{c}_{uv}^w \cdot 1$ and $L^w = \sum_{s,t} \hat{c}_{s,t}^ww\otimes \partial^s\otimes \partial^t$. Suppose $w=r_\alpha r_{\alpha_1}\cdots r_{\alpha_k}$ is a reduced word for $w$. Then $L^w =L^\alpha L^{\overline{w}}$, where $\overline{w} = r_\alpha w$. 
In particular
\begin{align*}
\sum_{u,v} c_{uv}^w \otimes { S^{u} \otimes S^{v} } &= L^w (S_1\otimes S^1 \otimes S^1)
= \left(L^\alpha \sum_{s, t} \hat{c}_{st}^{\overline{w}}\overline{w}\otimes \partial_s \otimes \partial_t\right) (S_1\otimes S^1 \otimes S^1)\\
& = \sum_{s, t}\big( \partial_\alpha r_\alpha \hat{c}_{st}^{\overline{w}}\overline{w} \otimes \partial_s \otimes \partial_t +r_\alpha \hat{c}_{st}^{\overline{w}}\overline{w} \otimes \partial_\alpha \partial_s \otimes \partial_t
+ r_\alpha \hat{c}_{st}^{\overline{w}}\overline{w} \otimes \partial_s \otimes  \partial_\alpha \partial_t \\
&  \hspace{.3in} +  \alpha r_\alpha \hat{c}_{st}^{\overline{w}}\overline{w} \otimes \partial_\alpha \partial_s \otimes  \partial_\alpha \partial_t\big) (S_1\otimes S^1 \otimes S^1)\\
\end{align*}
The term $\hat{c}_{uv}^w \otimes S^u\otimes S^v$ on the left is obtained as the image of $S_1\otimes S^1 \otimes S^1$ under those tensors with terms $\partial_u\otimes \partial_v$ in the second and third positions. Note that $\partial_\alpha\partial_s =\partial_{s'}$ exactly when $r_\alpha s =s'$ {\em and} $\ell(s') = \ell(s)+1$. If $r_\alpha s =s'$ but $\ell(s') \neq \ell(s)+1,$ then $\partial_\alpha\partial_s =0$.
Let $\overline{v}=r_\alpha v$ and $\overline{u}=r_\alpha u$. By matching the terms,
\begin{align*}
c_{uv}^w \otimes  { S^{u} \otimes S^{v} }  &= \big(\partial_\alpha r_\alpha \hat{c}_{uv}^{\overline{w}}\overline{w} \otimes \partial_u \otimes \partial_v 
+r_\alpha \hat{c}_{\overline{u},v}^{\overline{w}}\overline{w} \otimes \partial_\alpha \partial_{\overline{u}} \otimes \partial_v
+ r_\alpha \hat{c}_{u,\overline{v}}^{\overline{w}}\overline{w} \otimes \partial_u \otimes  \partial_\alpha \partial_{\overline{v}} \\
&  \hspace{.3in} +  \alpha \partial_\alpha \hat{c}_{\overline{u},\overline{v}}^{\overline{w}}\overline{w} \otimes \partial_\alpha \partial_{\overline{u}} \otimes  \partial_\alpha \partial_{\overline{v}} \big) (S_1\otimes S^1 \otimes S^1)\\
&= \big(\partial_\alpha r_\alpha \hat{c}_{uv}^{\overline{w}}\overline{w} \otimes \partial_u \otimes \partial_v 
+ [\overline{u} < u] \ r_\alpha \hat{c}_{\overline{u},v}^{\overline{w}}\overline{w} \otimes \partial_u \otimes \partial_v
+ [\overline{v} < v]\  r_\alpha \hat{c}_{u,\overline{v}}^{\overline{w}}\overline{w} \otimes \partial_u \otimes  \partial_v \\
& \hspace{.3in}  +  [\overline{u} < u] [\overline{v} < v]\  \alpha r_\alpha \hat{c}_{\overline{u},\overline{v}}^{\overline{w}}\overline{w} \otimes \partial_u \otimes  \partial_v \big) (S_1\otimes S^1 \otimes S^1).
\end{align*}
We evaluate the expression on the right and isolate the first tensor to obtain
\begin{align*}
c_{uv}^w &= (\partial_\alpha r_\alpha \hat{c}_{uv}^{\overline{w}}\overline{w})\cdot 1 + [\overline{u} < u]\  (r_\alpha \hat{c}_{\overline{u},v}^{\overline{w}}\overline{w} )\cdot 1+  [\overline{v} < v]\  (r_\alpha \hat{c}_{u,\overline{v}}^{\overline{w}}\overline{w} )\cdot 1 +  [\overline{u} < u] [\overline{v} < v] \ (\alpha r_\alpha \hat{c}_{\overline{u},\overline{v}}^{\overline{w}}\overline{w}) \cdot 1\\
& = (\partial_\alpha r_\alpha \hat{c}_{uv}^{\overline{w}})\cdot 1 +[\overline{u} < u] c_{\overline{u}, v}^{\overline{w}} +   [\overline{v} < v] c_{u, \overline{v}}^{\overline{w}} +  [\overline{u} < u] [\overline{v} < v]   \alpha c_{\overline{u},\overline{v}}^{\overline{w}}.
\end{align*}
\end{proof}

Fundamentally, this recurrence derives from the left action of $N(T)$ on $G/B$,
which normalizes but does not commute with the $T$-action; for this reason
one finds operators applied to the $c$ coefficients, in the first term.
In \cite[theorem 2]{noncomplex} the second author gave a similar recurrence,
but based on the $T$-equivariant {\em right} action of $W$, which thereby
only involves scaling but not operating on the $c$ coefficients.

{
We finish with an example illustrating the use of the recursive formula.
\begin{Example} We compute $c_{u,v}^w$ in the $S_3$ case, with
  $u=[312]$, $v=[132]$ and $w=w_0=[321]$ in 1-line notation. First we
  use $\overline{w}=r_1w$. Then $\overline{u}=r_1u \not< u$ and
  $\overline{v}=r_1v\not< v$. The three latter terms in the sum of the
  first recursion relationship drop out and we obtain
$$
c_{[312],[132]}^{[321]} = 
c_{uv}^w = \partial_1 r_1 \cdot c_{uv}^{\overline{w}}
 = \partial_1 r_1 \cdot c_{[312],[132]}^{[312]}
$$
We set about to compute $c_{uv}^{\overline{w}}$. Note that $r_2r_1$ is a reduced word for $\overline{w}$. There is only one subword for $u$, mainly $r_2r_1$, and one subword for $v$, mainly $r_2-$. Therefore $c_{uv}^{\overline{w}} = \alpha_2 r_2 r_1\cdot 1$ and we obtain
$$
c_{uv}^w = \partial_1 r_1 \alpha_2 r_2 r_1\cdot 1 = \partial_1 (r_1(\alpha_2))=\partial_1 (\alpha_1+\alpha_2) = 1.
$$
As a check on this result, we consider the recursion with $r_2$ 
instead of $r_1$, so $\overline{w} = r_2w=[231]$. 
Then $\overline{u}=r_2u =[213]< u$ and $\overline{v}=r_2v=1\leq v$. 
In principle all four terms are nonzero:
$$
c_{uv}^w=\partial_2 r_2 \cdot c_{uv}^{\overline{w}} +c_{\overline{u}, v}^{\overline{w}} +   c_{u, \overline{v}}^{\overline{w}} +   \alpha c_{\overline{u},\overline{v}}^{\overline{w}}.
$$
However $u \not \leq \overline{w}$, so the first and third terms
$c_{uv}^{\overline{w}}$ and $c_{u, \overline{v}}^{\overline{w}}$ vanish. 
The last term $c_{\overline{u},\overline{v}}^{\overline{w}} = c_{[213],1}^{[231]}=0$ because $S_{[213]}S_{1} = S_{[213]}$. Thus $c_{uv}^w = c_{\overline{u}, v}^{\overline{w}} = c_{[213],[132]}^{[231]}$ is the only remaining nonzero term. This smaller structure constant is easily seen to be 1, for instance by another application of same inductive formula with $r_1[231]=[132]<[231]$.  Note that $r_1[132]\not<[132]$ which forces two terms in the recursive sum to be $0$. We obtain
$$
c_{[213],[132]}^{[231]}
\quad=\quad \partial_1 r_1 \cdot c_{[213],[132]}^{[132]} \quad+\quad c_{1,[132]}^{[132]}
\quad=\quad 0\quad+\quad 1
$$
where the last two equalities follow from $[213]\not\leq [132]$ and 
 $S_1S_{[132]}=S_{[132]}$.
\end{Example}
}

\section{ Proof of Theorem \ref{th:b-formula}}
\label{se:Restrctionproof}
Here we prove the restrictions formul\ae\ from
Lemma~\ref{lemma:Trestriction} and make the inductive argument for
Theorem~\ref{th:b-formula}.  The proof of
  Lemma~\ref{lemma:Trestriction} relies on the perfect pairing
  $\langle, \rangle$ between $K_T^*(BS^Q)$ and $K^T_*(BS^Q)$, and its
  nondegenerate extension to the $K$-theory of the fixed point set
  $(BS^Q)^T$ over $frac(K_T^*)$, whose formulas are stated in
  Proposition 3, below. To prove Lemma~\ref{lemma:Trestriction} we
  introduce classes on $K_T^*((BS^Q)^T)$ with the desired restrictions,
  and use the nondegeneracy of $\langle, \rangle$ to show that they
  must in fact be the localizations of the classes $\tau^\circ_P$. The
  other statements of Lemma~\ref{lemma:Trestriction} follow
  immediately.  The inductive argument for Theorem~\ref{th:b-formula}
  requires some basic vanishing properties of the structure constants
  on $BS^Q$, established in Lemma~\ref{le:vanishing}.  Finally, we
  carry out the inductive proof with an exhaustive case check.   We
give the proofs only in $K$-theory, as the cohomology version follows
by taking associated graded w.r.t.  the
$\langle \{1-e^\lambda\ :\ \lambda\in T^*\} \rangle$-adic filtration.

\begin{Proposition}\label{prop:woodshole}
  Let $\gamma \in K_T^*(BS^Q)$, and $R\subset Q$. Then the perfect
  $K_T^*$-valued pairing $\langle,\rangle$ between $K_T^*(BS^Q)$ and
  $K^T_*(BS^Q)$ can be computed at $[\calO_{BS^R}], [I_{BS^R}]
  \in K_*^T(BS^Q)$ as
\begin{align*}
   \langle \gamma, [\calO_{BS^R}] \rangle  &=\sum_{J \subset R} \frac{\gamma|_J} 
      {\prod\limits_{i\in R} (1-e^{(\prod_{j\in J, j\leq i} r_j)\alpha_i})}\\
  \langle \gamma, [I_{BS^R}] \rangle &=      \sum_{J \subset R} \frac{\gamma|_J         }
        {\prod\limits_{i\in R}  (e^{-(\prod_{j\in J, j\leq i} r_j)\alpha_i})^{[i\notin J]}
        (1-e^{(\prod_{j\in J, j\leq i} r_j)\alpha_i})}  
 \end{align*}
  We can extend the pairing to a nondegenerate $frac(K_T^*)$-valued pairing
  of $K_T^*((BS^Q)^T)$ and $K^T_*(BS^Q)$, still called $\langle,\rangle$.
\end{Proposition}

\begin{proof}
  Using the push-pull formula applied to the inclusion $BS^R \into BS^Q$, 
  we can compute the first pairing by applying Atiyah-Bott's Woods Hole 
  formula for $K_T^*$-integration to the submanifold $BS^R$.
  That in turn requires determining the weights in the tangent space at $J$ of $BS^R$,
computed in \cite[Equation (23)]{ElekLu}.

  For the second formula, 
  we use $[I_{BS^R}] = \sum_{S\subset R} (-1)^{R\setminus S} [\calO_{BS^S}]$, so
  \begin{eqnarray*}
    \langle \gamma, [I_{BS^R}] \rangle 
    &=& \sum_{S\subset R} (-1)^{R\setminus S} \langle \gamma, [\calO_{BS^S}] \rangle 
    = \sum_{S\subset R} (-1)^{R\setminus S} 
        \sum_{J \subset S} \frac{\gamma|_J} 
        {\prod\limits_{i\in S} (1-e^{(\prod_{j\in J, j\leq i} r_j)\alpha_i})} \\
    &=& \sum_{J \subset R} \gamma|_J
        \sum_{S\subset R,\, S\supseteq J} (-1)^{R\setminus S} 
        \frac{1}
        {\prod\limits_{i\in S} (1-e^{(\prod_{j\in J, j\leq i} r_j)\alpha_i})} \\
    &=& \sum_{J \subset R} \gamma|_J
        \sum_{S\subset R,\, S\supseteq J} (-1)^{R\setminus S} 
        \frac{\prod\limits_{i\in R\setminus S} (1-e^{(\prod_{j\in J,j\leq i} r_j)\alpha_i})}
        {\prod\limits_{i\in R} (1-e^{(\prod_{j\in J, j\leq i} r_j)\alpha_i})} 
        \qquad \text{let $S^c = R\setminus S$} \\
    &=& \sum_{J \subset R} \frac{\gamma|_J}
        {\prod\limits_{i\in R} (1-e^{(\prod_{j\in J, j\leq i} r_j)\alpha_i})} 
        \sum_{S^c\subset R\setminus J} 
        {\prod\limits_{i\in S^c} (e^{(\prod_{j\in J, j\leq i} r_j)\alpha_i}-1)} \\
    &=& \sum_{J \subset R} \frac{\gamma|_J}
        {\prod\limits_{i\in R} (1-e^{(\prod_{j\in J, j\leq i} r_j)\alpha_i})} 
        \prod_{i \in R\setminus J} (1 + (e^{(\prod_{j\in J, j\leq i} r_j)\alpha_i}-1)) \\
    &=& \sum_{J \subset R} \frac{\gamma|_J
        \prod_{i \in R\setminus J} e^{(\prod_{j\in J, j\leq i} r_j)\alpha_i} }
        {\prod\limits_{i\in R} (1-e^{(\prod_{j\in J, j\leq i} r_j)\alpha_i})}         
    = \sum_{J \subset R} \frac{\gamma|_J         }
        {\prod\limits_{i\in R}  (e^{-(\prod_{j\in J, j\leq i} r_j)\alpha_i})^{[i\notin J]}
        (1-e^{(\prod_{j\in J, j\leq i} r_j)\alpha_i})}         
  \end{eqnarray*}

  The Woods Hole formula $\sum_{J \subset R} \left(\gamma|_J \, \big{/}\, 
      {\prod\limits_{i\in R} (1-e^{(\prod_{j\in J, j\leq i} r_j)\alpha_i})}\right)$
  is visibly $frac(K_T^*)$-valued, for any system
  $(\gamma|_J) \in K_T^*((BS^Q)^T)$ of point restrictions. Since both
  $K_T^*(BS^Q)$ and $K^T_*(BS^Q)$ are free $K_T^*$-modules, the nondegenerate
  pairing extends to their rationalizations (extension of scalars
  using $frac(K_T^*)\tensor_{K_T^*}$), and restricts to nondegenerate pairings
  between any full rank $K_T^*$-sublattice of $frac(K_T^*)\tensor_{K_T^*} K_*^T(BS^Q)$
  against any full rank $K_T^*$-sublattice of \break 
  $frac(K_T^*)\tensor_{K_T^*} K_T^*(BS^Q)$.
  We apply this to the sublattices $K_*^T(BS^Q)$ and $K_T^*((BS^Q)^T)$, 
  respectively, seeing the latter as a submodule of the vector space
  $frac(K_T^*)\tensor_{K_T^*} K_T^*(BS^Q)$ since the map
  $K_T^*(BS^Q) \into K_T^*((BS^Q)^T)$ becomes an isomorphism upon
  rationalization.
\end{proof}

\begin{proof}[Proof of Lemma~\ref{lemma:Trestriction}.] 

Define $\gamma_R^\circ$ 
  not in $K_T^*(BS^Q)$, but only as an element of $K_T^*$ of the fixed point set 
  $(BS^Q)^T \iso \{S\ :\ S\subset Q\}$:
  $$
  \gamma_R^\circ|_S := \sum_{J:\ R\subset J \subset S} 
  (-1)^{|J\setminus R|} \prod_{t\in J} \left(1-e^{\left(\prod_{j\in S, j\leq t} r_j\right)\alpha_t}\right)
  \quad\in K_T^*(S) $$
  We argue that
  $\gamma_R^\circ$ has the same point restrictions as the desired dual
  basis element $\tau_R^\circ$, by showing that $\gamma_R^\circ,\tau_R^\circ$ 
  have the same pairings with $\{[\calO_{BS^S}]\}$, and then invoking
  nondegeneracy of the pairing.

By definition of dual basis, $\langle\tau_R^\circ, [\calO_{BS^P}]\rangle = [R=P]$.
On the other hand, if we pair $\gamma_R^\circ$ 
against $[\calO_{BS^P}]$ using $\langle, \rangle$, the expression simplifies:
\begin{align}
\sum_S \frac{\gamma^\circ_R|_S }{\prod\limits_{i\in P} (1-e^{(\prod_{j\in S, j\leq i} r_j)\alpha_i})} 
& =
 \sum\limits_{S:\ S\subset P}\frac{ 
\sum\limits_{J:\ R\subset J \subset S} (-1)^{|J\setminus R|} 
\prod\limits_{t\in J} 
\left(1-e^{\left(\prod_{j\in S, j\leq t} r_j\right)\alpha_t}\right)}
{\prod\limits_{i\in P} (1-e^{(\prod_{j\in S, j\leq i} r_j)\alpha_i})}\nonumber \\
&  =\sum_{J:\ R\subset J \subset P} 
(-1)^{|J\setminus R|} \sum_{S:\  J \subset S\subset P}
\frac{1}
{\prod\limits_{i\in P\setminus J} (1-e^{(\prod_{j\in S, j\leq i} r_j)\alpha_i})}.\label{rationalsum1}
 \end{align}  
We claim that, for each $J$, the sum $\sum\limits_{S:\  J \subset S\subset P}
\frac{1}
{\prod\limits_{i\in P\setminus J} (1-e^{(\prod_{j\in S, j\leq i} r_j)\alpha_i})} =1$.   When  $|P|=0$, the sum is over the single choice $J=S=P=\emptyset$ of the empty product, returning 1. Suppose 
$P$ is nonempty and that $a$ is the first letter of $P$. 
 
 If $a\in J$ (as its first term), then all $S$ in the summing set contain $a$ as well. Denote $P_0 = P\setminus \{a\} $ and similarly for $J_0$. Since  $a\not\in P\setminus J$,  the sum 
 simplifies to 
 $$
  \sum\limits_{S:\ J_0\subset S\setminus\{a\} \subset P_0 }\frac{1}{\prod\limits_{i\in P\setminus J}   (1-e^{(\prod_{j\in S\setminus \{a\} , j\leq i} r_j)\alpha_i})} = 
  \sum_{S:\ J_0 \subset S \subset P_0 }\frac{1}{\prod\limits_{i\in P_0\setminus J_0}  (1-e^{(\prod_{j\in S , j\leq i} r_j)\alpha_i})} = 1,
 $$
 where the last equality follows by induction.

On the other hand, if $a\not\in J$, the sum
\begin{align*}
&=\sum_{S:\ a\in S \atop J\subset S\subset P}  \frac{1}{\prod\limits_{i\in P\setminus J} (1-e^{(\prod_{j\in S, j\leq i} r_j)\alpha_i})} + \sum_{S:\ J\subset S\subset P\atop a\not\in S}  \frac{1}{\prod\limits_{i\in P\setminus J} (1-e^{(\prod_{j\in S, j\leq i} r_j)\alpha_i})} \\
&=\sum_{S:\  a\in S \atop J\subset S\subset P}  \frac{1}{(1-e^{-\alpha_a})\prod\limits_{i\in P_0\setminus J} (1-e^{r_a(\prod_{j\in S\setminus\{a\}, j\leq i} r_j)\alpha_i})}  +  \sum_{S:\  a\not\in S \atop J\subset S\subset P}  \frac{1}{(1-e^{\alpha_a})\prod\limits_{i\in P_0\setminus J} (1-e^{(\prod_{j\in S, j\leq i} r_j)\alpha_i})} \\
&=\frac{1}{(1-e^{-\alpha_a})}r_a\sum_{S:\  a\in S \atop J\subset S\setminus \{a\}\subset P_0}  \frac{1}{\prod\limits_{i\in P_0\setminus J} (1-e^{(\prod_{j\in S\setminus \{a\}, j\leq i} r_j)\alpha_i})}   + \frac{1}{(1-e^{\alpha_a})} \sum_{S:\  a\not\in S}  \frac{1}{\prod\limits_{i\in P_0\setminus J} (1-e^{(\prod_{j\in S, j\leq i} r_j)\alpha_i})} \\
&= \frac{1}{(1-e^{-\alpha_a})}r_a(1) + \frac{1}{(1-e^{\alpha_a})}(1)=1\\
\end{align*}
where the last two equalities follow from induction and a simple calculation. By using this in $\eqref{rationalsum1}$ we obtain
$$
\langle \gamma_R^\circ, [\calO_{BS^P}]\rangle = \sum_{J:\ R\subset J\subset P} (-1)^{|J\setminus R|} =  (1-1)^{|P\setminus R|} = [R = P].
$$

 Finally, we use Proposition \ref{prop:woodshole}'s nondegeneracy of the
  pairing to infer $\tau_R^\circ = \gamma_R^\circ$.

  The $\tau_R|_S$ result is then derived 
  using
  $\tau_R = \sum\limits_{P: P\supset R}  \tau^\circ_P$ (Equation \eqref{eq:tautaucirc}):
  \begin{align*}
\left(\prod_{m\in S} (1-e^{-\alpha_m})^{[m\in R]} r_m\right)\cdot 1   &= \left(\prod_{m\in S} (1-e^{-\alpha_m}+e^{-\alpha_m})^{[m\not\in R]}(1-e^{-\alpha_m})^{[m\in R]} r_m\right)\cdot 1  \\
&=\sum\limits_{P: R\subset P \subset S} \left( \prod_{m\in S} (e^{-\alpha_m})^{[m\not\in P]} (1-e^{-\alpha_m})^{[m\in P]} r_m\right)\cdot 1  \\
& =  \sum\limits_{P: P\supset R}  \tau^\circ_P|_S = \tau_R|_S,
  \end{align*}
  as stated in Lemma~\ref{lemma:Trestriction}.
The  $T_R|_S$ result is obtained by limiting from $K_T^*$ to $H^*_T$ as usual.
\end{proof}

\begin{table}[h] 
  \caption{\label{table:listrestrictions}  Some relations between point restrictions in $K_T^*(BS^Q)$, easily proven 
    from Lemma \ref{lemma:Trestriction}. }
\begin{center}
\begin{tabular}{|c|c|c|}
\hline &&\\
&$U$ begins with $r_\alpha$ &$U$ doesn't begin with $r_\alpha$ \\
 &$U=r_\alpha U_0$ & \\
&&\\ \hline &&\\

$J = r_\alpha J_0$ & $\tau^\circ_U|_J = (1-e^{-\alpha })r_\alpha  \left( \tau_{U_0}^\circ |_{J_0}\right)$  &  $\tau^\circ_U|_J = e^{-\alpha } r_\alpha \left( \tau^\circ_U|_{J_0}\right)$\\
&&\\ \hline && \\
$J = r_\alpha J_0$ & $\tau_U|_J = (1-e^{-\alpha })r_\alpha \left(\tau_{U_0}|_{J_0}\right)$ &  $\tau_{U}|_J =r_\alpha  \left(\tau_{U}|_{J_0}\right)$\\
&&\\ \hline 
\end{tabular}
\end{center}
\label{default}
\end{table}

In order to prove the main theorem, we need a basic restriction property:

\begin{Lemma}\label{le:vanishing} 
  The structure constants $b_{RS}^J, d_{RS}^J, \dcirc_{RS}^J$ vanish
  unless $J \supseteq R,S$.
\end{Lemma}

\begin{proof}
  $$
  d_{RS}^J = \langle \tau_R \tau_S, [BS^J] \rangle
  = \sum_{L \subset J} \frac{(\tau_R)|_L (\tau_S)|_L}{\prod\limits_{i\in J} (1-e^{(\prod_{j\in L, j\leq i} r_j)\alpha_i})}
  = \sum_{L \subset J,\ L \supseteq R,S} \frac{(\tau_R)|_L (\tau_S)|_L}{\prod\limits_{i\in J} (1-e^{(\prod_{j\in L, j\leq i} r_j)\alpha_i}))} 
  $$
  where the last equality follows from vanishing properties of $\tau_R,\tau_S$.
  This sum is empty unless $J \supseteq R,S$. The proofs for
  the other two families of structure constants are exactly the same.
\end{proof}

\begin{proof}[Proof of Theorem~\ref{th:b-formula}] 
  We prove the theorem for $\{d_{RS}^J\}$ and $\{\dcirc_{RS}^J\}$ inductively, 
  where formulas for $\{d^Q_{RS}\}$ and $\{\dcirc^Q_{RS}\}$ for $Q\subsetneq J$ form the inductive assumption. 
  The result for the $H_T$-coefficients $\{b^Q_{RS}\}$
  follows by taking the associated graded to get the statement in cohomology. There are four cases indicated below in bold before their proofs. The first case will serve as the base for the induction and the others as the inductive step. 

\noindent {\bf \underline{Case 1:} $J=\emptyset$.}

As $J=\emptyset$, $R,S\subset J$ implies $R,S=\emptyset$. An easy calculation from the restriction lemma shows that $\tau_\emptyset |_\emptyset = \tau_\emptyset^\circ |_\emptyset = 1$. Thus
 $$
 \tau_R |_J \tau_S |_J = 1 = d_{RS}^J \tau_J |_J =  d_{RS}^J \quad \mbox{ and }\quad
 \tau^\circ_R |_J \tau^\circ_S |_J = 1 = \dcirc_{RS}^J \tau^\circ_J |_J =  \dcirc_{RS}^J.
 $$
 Meanwhile, the right hand sides of the equations in Theorem~\ref{th:b-formula}
 are empty products, verifying the theorem in this case.

Having dealt with the case  $J=\emptyset$, we assume henceforth that $J$ has at least one element. 
By way of induction assume that Theorem~\ref{th:b-formula} holds for all $Q\subsetneq J$.  
 Using the vanishing properties (Lemma~\ref{le:vanishing}) we obtain
 $$
\tau_S|_J \tau_R|_J = d_{RS}^J \tau_J|_J + \sum_{Q\ :\ Q\subsetneq J} d_{RS}^Q \tau_Q|_J \qquad\qquad
\tau^\circ_S|_J \tau^\circ_R|_J = \dcirc_{RS}^J \tau^\circ_J|_J + \sum_{Q\ :\  Q\subsetneq J} \dcirc_{RS}^Q \tau^\circ_Q|_J 
$$
since  $J\subset Q$, implies either $Q=J$ or $\tau_Q|_J=\tau^\circ_Q|_J=0$.
Solving for $d_{RS}^J$ and $\dcirc_{RS}^J$ (with a division justified by
the last statement in Lemma \ref{lemma:Trestriction})
\begin{align}
d_{RS}^J &= \frac{\tau_S|_J \tau_R|_J - \sum_{Q: \ Q\subsetneq J} d_{RS}^Q \tau_Q|_J}{\tau_J|_J}, \quad\mbox{ and}\label{eq:inductivestep}\\
\dcirc_{RS}^J &= \frac{\tau^\circ_S|_J \tau^\circ_R|_J - \sum_{Q:\ Q\subsetneq J} \dcirc_{RS}^Q \tau^\circ_Q|_J}{\tau^\circ_J|_J}.\label{eq:Kinductivestep}
\end{align}

\noindent {\bf \underline{Case 2:} Neither $R$ nor $S$ contains the first letter of $J$.}

\begin{Lemma}\label{lemma:simplification} 
  Let $J_0$ be defined by $J=r_\alpha J_0$, where $r_\alpha $ is the
  simple reflection in the first position of $J$. Suppose also that
  neither $R$ nor $S$ contains that first letter of $J$. Then
\begin{align*}
  \sum_{Q\ : \ Q\subsetneq J} d_{RS}^Q \tau_Q|_J 
  &= \phantom{e^{-\alpha }\bigg(}
    \sum_{Q:\ Q\subsetneq J_0} 
    \phantom{e^{-\alpha }} r_\alpha ({d}_{RS}^{Q}\tau_Q|_{J_0})  
    +  d_{RS}^{J_0} r_\alpha (\tau_{J_0}|_{J_0}) \\
  \sum_{Q: \ Q\subsetneq J} \dcirc_{RS}^Q \tau^\circ_Q|_J 
  &= e^{-\alpha }\left( \sum_{Q:\ Q\subsetneq J_0}  
    e^{-\alpha } r_\alpha (\dcirc_{RS}^Q \tau^\circ_Q|_{J_0}) 
    + \dcirc_{RS}^{J_0} r_\alpha ( \tau^\circ_{J_0}|_{J_0})\right),
\end{align*}
where the sum is over those $Q$ that contain both $R$ and $S$, since otherwise $d_{RS}^Q = \dcirc_{RS}^Q=0$.
\junk{\color{blue}
\begin{align*}
\sum_{Q: \ |Q|\leq k, R,S\subset Q} d_{RS}^Q \tau_Q|_J &= \sum_{ |Q|\leq k-1} r_m({d}_{RS}^{Q}\tau_Q|_{J_0})  +  d_{RS}^{J_0} r_m(\tau_{J_0}|_{J_0}) \\
\sum_{Q: \ |Q|\leq k, R,S\subset Q} \dcirc_{RS}^Q \tau^\circ_Q|_J &= e^{-\alpha_m}\left( \sum_{|Q|\leq k-1}  e^{-\alpha_m} r_m(\dcirc_{RS}^Q \tau^\circ_Q|_{J_0}) + \dcirc_{RS}^{J_0} r_m( \tau^\circ_{J_0}|_{J_0})\right) \\
\end{align*}
}
\end{Lemma}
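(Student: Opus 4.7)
The plan is to split the left-hand sum over $Q \subsetneq J$ according to whether $Q$ contains the first position of $J = r_\alpha J_0$, rewrite each $\tau_Q|_J$ (respectively $\tau^\circ_Q|_J$) using Table \ref{table:listrestrictions}, and then close the identity via a short recursion on the structure constants supplied by the inductive form of Theorem \ref{th:b-formula}.

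First I would observe that, since $V$ and $W$ omit position $1$, the vanishing property (Lemma \ref{le:vanishing}) restricts the nonzero summands to those $Q$ with $V, W \subset Q$, and every such $Q \subsetneq J$ is either of the form $r_\alpha Q_0$ with $Q_0 \subsetneq J_0$, or a subword $Q \subset J_0$ (possibly $Q = J_0$ itself). Table \ref{table:listrestrictions} gives $\tau_Q|_J = (1-e^{-\alpha})\,r_\alpha(\tau_{Q_0}|_{J_0})$ in the first case and $\tau_Q|_J = r_\alpha(\tau_Q|_{J_0})$ in the second, with the corresponding $e^{-\alpha}$-twisted versions for $\tau^\circ_Q|_J$. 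Peeling off the unique $Q = J_0$ summand accounts directly for the isolated term $d_{VW}^{J_0}\,r_\alpha(\tau_{J_0}|_{J_0})$ (respectively $e^{-\alpha}\dcirc_{VW}^{J_0}\,r_\alpha(\tau^\circ_{J_0}|_{J_0})$) on the right-hand side.

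What then remains is to verify, for each $Q \subsetneq J_0$ with $V, W \subset Q$, the termwise identities
\[
d_{VW}^Q + (1-e^{-\alpha})\,d_{VW}^{r_\alpha Q} = r_\alpha\bigl(d_{VW}^Q\bigr), \qquad
e^{-\alpha}\dcirc_{VW}^Q + (1-e^{-\alpha})\,\dcirc_{VW}^{r_\alpha Q} = e^{-2\alpha}\,r_\alpha\bigl(\dcirc_{VW}^Q\bigr).
\]
These are the structural recursions $d_{VW}^{r_\alpha Q} = -\Demop_\alpha(d_{VW}^Q)$ and its twisted $\dcirc$-analogue. Since $r_\alpha Q \subsetneq J$, the inductive hypothesis for Theorem \ref{th:b-formula} supplies the formulas on both sides: prepending $r_\alpha$ at a position lying outside $V \cup W$ contributes the single operator factor $e^{\alpha}\,r_\alpha(-\Demop_\alpha)$ (respectively $e^{-\alpha}\,r_\alpha(-\Demopisobaric_\alpha)$), and a short manipulation identifies it with $-\Demop_\alpha$ (respectively the appropriate twist) as an operator on $K_T$.

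The main obstacle is bookkeeping the $e^{\pm\alpha}$ twists in the $\dcirc$ case, where both Table \ref{table:listrestrictions} and the formula from Theorem \ref{th:b-formula} inject extra $e^{-\alpha}$ factors whose placement must be carefully reconciled with the outer $e^{-\alpha}$ appearing on the right-hand side; the signs in $r_\alpha\Demop_\alpha = -\Demop_\alpha r_\alpha$ (and its $\Demopisobaric$-cousin $\Demopisobaric_\alpha r_\alpha = -e^\alpha r_\alpha \Demopisobaric_\alpha + (1+e^\alpha)$ from the introduction) must also be tracked. Once the recursions above are in hand, the matching of the remaining summands is a direct term-by-term rearrangement, and the cohomological analogue for $b_{VW}^Q$ falls out by passing to the associated graded with respect to the $\langle 1-e^\lambda \rangle$-adic filtration.
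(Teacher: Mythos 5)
Your proposal is correct and follows essentially the same route as the paper's proof: split the sum over $Q \subsetneq J$ by whether the first position $r_\alpha$ lies in $Q$, convert restrictions via Table \ref{table:listrestrictions}, peel off the $Q = J_0$ term, and invoke the inductive form of Theorem \ref{th:b-formula} to identify $d_{VW}^{r_\alpha Q} = -\Demop_\alpha(d_{VW}^Q)$ (respectively its $\Demopisobaric$/$e^{-\alpha}$-twisted analogue). Your reformulation via the explicit termwise identities is a clean way to present the same cancellation the paper performs by expanding the Demazure operator.
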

\begin{proof}[Proof of Lemma \ref{lemma:simplification}]

If $Q$ begins with $r_\alpha $, the inductive assumptions from Theorem~\ref{th:b-formula} stated with $Q_0$ defined by $Q = r_\alpha Q_0$ are as follows:
\begin{align*}
d_{RS}^Q &= e^{\alpha } r_\alpha  (-\Demop_\alpha )\prod_{q\in Q_0} 
(e^{\alpha_q})^{[q\not\in R\cup S]} (1-e^{-\alpha_q})^{[q\in R\cap S]} r_q (-\Demop_\alpha )^{[q\not\in R\cup S]}\cdot 1\\
&= e^{\alpha } r_\alpha  (-\Demop_\alpha ) \left( d_{RS}^{Q_0}\right)\cdot 1= \Demop_\alpha  r_\alpha   \left( d_{RS}^{Q_0}\right)\cdot 1
\end{align*}
and, similarly,
\begin{align*}
\dcirc_{RS}^Q &= e^{-\alpha } r_\alpha  (-\Demopisobaric_\alpha )\prod_{q\in Q_0} (e^{-\alpha_q})^{[q\not\in R\cap S]} (1-e^{-\alpha_q})^{[q\in R\cap S]} r_q (-\Demopisobaric_q)^{[q\not\in R\cup S]}\cdot 1\\
& =  e^{-\alpha } r_\alpha  (-\Demopisobaric_\alpha )\left( \dcirc_{RS}^{Q_0} \right)\cdot 1 =  -e^{-\alpha }\Demopisobaric_\alpha  \left(\dcirc_{RS}^Q\right)\cdot 1.
\end{align*}
If $Q\subset J$ does not begin with $r_\alpha $, then $\tau_Q |_J \neq 0$ (or $\tau_Q^\circ |_J \neq 0$) implies $Q\subset J_0$ where $J_0$ is defined by $J=r_\alpha J_0$.
Thus we break the sum into those $Q$ containing the first letter $r_\alpha$ and those not.
\begin{align*}
\sum_{Q: \ Q\subsetneq J} d_{RS}^Q \tau_Q|_J  & = \sum_{ Q\subsetneq J, r_\alpha \in Q}  d_{RS}^Q \tau_Q|_J + \sum_{Q \subsetneq J, r_\alpha \not\in Q} d_{RS}^Q \tau_Q|_J \\
& = \sum_{ Q\subsetneq J, r_\alpha \in Q}  d_{RS}^Q \tau_Q|_J + \sum_{Q \subset J_0}
d_{RS}^Q r_\alpha \left(\tau_Q|_{J_0} \right)\quad\mbox{using Table~\ref{table:listrestrictions}}\\
& = \sum_{Q\subsetneq J, Q=r_\alpha Q_0,
\atop \mbox{\tiny for some }Q_0}  d_{RS}^Q \tau_Q|_J + \sum_{Q\subsetneq J_0} d_{RS}^Q r_\alpha \left(\tau_Q|_{J_0}\right)  + d_{RS}^{J_0} r_\alpha \left(\tau_{J_0}|_{J_0}\right)\\ 
& = \sum_{Q\subsetneq J, Q=r_\alpha Q_0,
\atop \mbox{\tiny for some }Q_0}  \left[\Demop_\alpha  r_\alpha  \left(d_{RS}^{Q_0}\right)\cdot 1\right]\left[\tau_Q|_J\right] + \sum_{Q\subsetneq J_0} d_{RS}^Qr_\alpha \left(\tau_Q|_{J_0}\right)  + d_{RS}^{J_0} r_\alpha \left( \tau_{J_0}|_{J_0}\right),\\
\end{align*}

\noindent 
by the inductive hypothesis, as neither $R$ nor $S$ begins with $r_\alpha $. 
We expand the operators:
\begin{align*}
& = \sum_{Q\subsetneq J, Q=r_\alpha Q_0,
\atop \mbox{\tiny for some }Q_0}  \left[\frac{r_\alpha  \left(d_{RS}^{Q_0}\right)\cdot 1 - d_{RS}^{Q_0}\cdot 1}{1-e^{-\alpha }}\right]\left[(1-e^{-\alpha }) r_\alpha  \left(\tau_{Q_0}|_{J_0}\right)\right] + \sum_{Q \subsetneq J_0 } d_{RS}^Q r_\alpha \left(\tau_Q|_{J_0}\right) + d_{RS}^{J_0} r_\alpha \left(\tau_{J_0}|_{J_0}\right)\\
&= \sum_{Q\subsetneq J, \atop Q=r_\alpha Q_0}  \left[r_\alpha  \left(d_{RS}^{Q_0}\right)\cdot 1\right]\left[r_\alpha  \left(\tau_{Q_0}|_{J_0}\right)\right] 
-\sum_{Q\subsetneq J, \atop Q=r_\alpha Q_0}  d_{RS}^{Q_0} r_\alpha  \left(\tau_{Q_0}|_{J_0}\right) + 
\sum_{Q\subsetneq J_0} d_{RS}^{Q} r_\alpha \left(\tau_{Q}|_{J_0}\right)  + d_{RS}^{J_0} r_\alpha \left(\tau_{J_0}|_{J_0}\right)\\ 
& =\phantom{\ } \sum_{R\subsetneq J_0} \phantom{Q}
\left[r_\alpha  \left(d_{RS}^{R}\right)\cdot 1\right] \left[r_\alpha  \left(\tau_{R}|_{J_0}\right)\right] 
-\phantom{\ }
\sum_{R\subsetneq J_0}  d_{RS}^{R} r_\alpha  \left(\tau_{R}|_{J_0} \right)+
\sum_{Q\subsetneq J_0} d_{RS}^Q r_\alpha \left( \tau_{Q}|_{J_0} \right) + d_{RS}^{J_0} r_\alpha \left(\tau_{J_0}|_{J_0}\right)\\
&= \phantom{\ }\sum_{ R\subset J_0} \phantom{Q} r_\alpha  \left(d_{RS}^{R}\tau_R|_{J_0} \right) 
\phantom{\left[r_\alpha  \tau_{Q}|_{J_0}\right]  -\sum_{Q\subsetneq J_0}  d_{RS}^{Q} r_\alpha  \tau_{Q}|_{J_0} +
\sum_{Q\subsetneq J_0, \atop Q\neq r_\alpha Q_0} d_{RS}^Q r_\alpha \tau_{Q}|_{J_0}  }
  + d_{RS}^{J_0} r_\alpha \left( \tau_{J_0}|_{J_0}\right), 
\end{align*}
\junk{\color{blue}
\begin{align*}
& = \sum_{|Q|\leq k, Q=r_mQ_0,
\atop \mbox{\tiny for some }Q_0}  \left[\frac{r_m \left(d_{RS}^{Q_0}\right)\cdot 1 - d_{RS}^{Q_0}\cdot 1}{1-e^{-\alpha }}\right]\left[(1-e^{-\alpha }) r_m \tau_{Q_0}|_{J_0}\right] + \sum_{|Q|\leq k-1 \atop Q\neq r_mQ_0,
} d_{RS}^Q \tau_Q|_J + d_{RS}^{J_0} \tau_{J_0}|J\\
&= \sum_{|Q|\leq k, \atop Q=r_mQ_0}  \left[r_m \left(d_{RS}^{Q_0}\right)\cdot 1\right]\left[r_m \tau_{Q_0}|_{J_0}\right] 
-\sum_{|Q|\leq k, \atop Q=r_mQ_0}  d_{RS}^{Q_0} r_m \tau_{Q_0}|_{J_0} + 
\sum_{|Q|\leq k-1, \atop Q\neq r_mQ_0} d_{RS}^Q r_m\tau_{Q}|_{J_0}  + d_{RS}^{J_0} r_m\tau_{J_0}|_{J_0}\\. 
& = \sum_{|Q|\leq k-1}  \left[r_m \left(d_{RS}^{Q}\right)\cdot 1\right]\left[r_m \tau_{Q}|_{J_0}\right] 
-\sum_{|Q|\leq k-1}  d_{RS}^{Q} r_m \tau_{Q}|_{J_0} + 
\sum_{|Q|\leq k-1, \atop Q\neq r_mQ_0} d_{RS}^Q r_m\tau_{Q}|_{J_0}  + d_{RS}^{J_0} r_m\tau_{J_0}|_{J_0}\\
&= \sum_{ |Q|\leq k-1} r_m \left(d_{RS}^{Q}\tau_Q|_{J_0} \right) 
\phantom{\left[r_m \tau_{Q}|_{J_0}\right]  -\sum_{|Q|\leq k-1}  d_{RS}^{Q} r_m \tau_{Q}|_{J_0} +
\sum_{|Q|\leq k-1, \atop Q\neq r_mQ_0} d_{RS}^Q r_m\tau_{Q}|_{J_0}  }
  + d_{RS}^{J_0} r_m\tau_{J_0}|_{J_0},
\end{align*}
}
\junk{where the change of index from the second to third line is possible because 
$Q\subset J_0$ implies $Q$ does not begin with $r_m$. {\color{blue} And this is where we use that $Q$ is reduced.}}

The proof is similar in the $\{\tau^\circ_Q\}$ basis. Using restrictions in Table \ref{table:listrestrictions} and  vanishing properties, 
\junk{{\color{blue}
\begin{align*}
\sum_{Q: \ |Q|\leq k, \atop R,S\subset Q} \dcirc_{RS}^Q \tau^\circ_Q|_J &= \sum_{|Q|\leq k, Q=r_mQ_0,
\atop \mbox{\tiny for some }Q_0}  \dcirc_{RS}^Q \tau^\circ_Q|_J + \sum_{|Q|\leq k, Q\neq r_mQ_0,
\atop \mbox{\tiny for any }Q_0} \dcirc_{RS}^Q \tau^\circ_Q|_J \\
& = \sum_{Q\subsetneq J, Q=r_mQ_0,
\atop \mbox{\tiny for some }Q_0}  \dcirc_{RS}^Q \tau^\circ_Q|_J + \sum_{|Q|\leq k-1, Q\neq r_mQ_0,
\atop \mbox{\tiny for any }Q_0}  \dcirc_{RS}^Q \tau^\circ_Q|_J  + \dcirc_{RS}^{J_0} \tau^\circ_{J_0}|_J\\
\end{align*}
}
}
\begin{align*}
\sum_{Q: \ Q\subsetneq J \atop R,S\subset Q} \dcirc_{RS}^Q \tau^\circ_Q|_J &= \sum_{Q\subsetneq J, r_\alpha \in Q}  \dcirc_{RS}^Q \tau^\circ_Q|_J + \sum_{Q\subsetneq J, r_\alpha \not\in Q} \dcirc_{RS}^Q \tau^\circ_Q|_J  \\
& = \sum_{Q\subsetneq J, Q=r_\alpha Q_0,
\atop \mbox{\tiny for some }Q_0}  \dcirc_{RS}^{r_\alpha Q_0}  (1-e^{-\alpha})  r_\alpha (\tau^\circ_{Q_0}|_{J_0})+ \sum_{Q\subsetneq J_0}  \dcirc_{RS}^Q e^{-\alpha}r_\alpha (\tau^\circ_Q|_{J_0})  + \dcirc_{RS}^{J_0} r_\alpha (\tau^\circ_{J_0}|_{J_0})\\
\end{align*}
where the equality on the second line follows from the restriction properties, as well as that $r_\alpha $ is the first letter of $Q$, if $Q$ contains it.
We realign the index sets, noting that the index set $\{Q:\ Q\subsetneq J, Q=r_\alpha  Q_0\}$ of the first sum equals
$\{r_\alpha Q_0: \ Q_0\subsetneq J_0\}$  which is in $1$-$1$ correspondence with $\{Q_0: \ Q_0\subsetneq J_0\}$, which in turn is the same as the index set for the second sum. The latter set can be reindexed as $\{R: R\subsetneq J_0\}$. We thus obtain
$$
\sum_{Q: \ Q\subsetneq J} \dcirc_{RS}^Q \tau^\circ_Q|_J 
= \sum_{R\subsetneq J_0} \left( \dcirc_{RS}^{r_\alpha R}  (1-e^{-\alpha})  r_\alpha (\tau^\circ_{R}|_{J_0})+  \dcirc_{RS}^R e^{-\alpha_q}r_\alpha \left(\tau^\circ_R|_{J_0}\right)\right)  + \dcirc_{RS}^{J_0} r_\alpha \tau^\circ_{J_0}|_{J_0}
$$


By the inductive hypothesis that $\dcirc_{RS}^{r_\alpha R} = -e^{-\alpha}\Demopisobaric_\alpha  \left(\dcirc_{RS}^R\right)\cdot 1$ for $R\subsetneq J_0$, this sum
\begin{align*}
& = \sum_{R\subsetneq J_0} 
\left(\left[-e^{-\alpha}\Demopisobaric_\alpha  \left(\dcirc_{RS}^{R}\right)\cdot 1\right]
(1-e^{-\alpha })  r_\alpha (\tau^\circ_{R}|_{J_0}) 
+ 
\dcirc_{RS}^R e^{-\alpha} r_\alpha (\tau^\circ_R|_{J_0}) \right)+ \dcirc_{RS}^{J_0} e^{-\alpha } r_\alpha ( \tau^\circ_{J_0}|_{J_0})\\
 &  =  e^{-\alpha }\left(\sum_{R\subsetneq J_0} \left[-(1-e^{-\alpha }) \Demopisobaric_\alpha  \left(\dcirc_{RS}^{R} 
 \right)\cdot 1+  \dcirc_{RS}^R  \right] r_\alpha (\tau^\circ_R|_{J_0})  +\dcirc_{RS}^{J_0} r_\alpha ( \tau^\circ_{J_0}|_{J_0})\right)\\
 &= e^{-\alpha }\left( \sum_{R\subsetneq J_0} \left[-(1-e^{-\alpha }) \frac{\left(\dcirc_{RS}^R-e^{-\alpha }r_\alpha (\dcirc_{RS}^R)\right)}{1-e^{-\alpha_\alpha }}
 +  \dcirc_{RS}^R 
   \right]r_\alpha (\tau^\circ_R|_{J_0}) +\dcirc_{RS}^{J_0} r_\alpha ( \tau^\circ_{J_0}|_{J_0})\right)\\
 &= e^{-\alpha }\left( \sum_{R\subsetneq J_0} \ \left[\phantom{ [-(1-e^{-\alpha })(\dcirc_{RS}^Q-  } 
e^{-\alpha }r_\alpha (\dcirc_{RS}^R) \phantom{  +  +\dcirc_{RS}^R  )  } \right]r_\alpha (\tau^\circ_R|_{J_0}) +\dcirc_{RS}^{J_0} r_\alpha ( \tau^\circ_{J_0}|_{J_0})\right),
 \end{align*} 
 proving the lemma.
\end{proof}
\junk{
{\color{blue}
\begin{align*}
& =  \sum_{Q\subsetneq J, Q=r_mQ_0,
\atop \mbox{\tiny for some }Q_0}  \dcirc_{RS}^{r_mQ_0} (1-e^{-\alpha_m})  r_m(\tau^\circ_{Q_0}|_{J_0}) + \sum_{|Q|\leq k-1, Q\neq r_mQ_0, \atop \mbox{\tiny for any }Q_0}  \dcirc_{RS}^Q e^{-\alpha_m} r_m(\tau^\circ_Q|_{J_0}) + \dcirc_{RS}^{J_0} e^{-\alpha_m} r_m( \tau^\circ_{J_0}|_{J_0})\\
& = \sum_{|Q|\leq k, Q=r_mQ_0,
\atop \mbox{\tiny for some }Q_0} 
\left[-e^{-\alpha_m}\Demopisobaric_m \left(\dcirc_{RS}^{Q_0}\right)\cdot 1\right]
(1-e^{-\alpha_m})  r_m(\tau^\circ_{Q_0}|_{J_0}) \\
&\phantom{=\sum_{|Q|\leq k, Q=r_mQ_0,
\atop \mbox{\tiny for some }Q_0}  \dcirc_{RS}^{r_mQ_0} (1-e^{-\alpha_m})  r_m(\tau^\circ_{Q_0}|_{J_0}) }
+ \sum_{|Q|\leq k-1, Q\neq r_mQ_0, \atop \mbox{\tiny for any }Q_0}  \dcirc_{RS}^Q e^{-\alpha_m} r_m(\tau^\circ_Q|_{J_0}) + \dcirc_{RS}^{J_0} e^{-\alpha_m} r_m( \tau^\circ_{J_0}|_{J_0})\\
& = \sum_{|Q|\leq k-1} 
\left[-e^{-\alpha_m}\Demopisobaric_m \left(\dcirc_{RS}^{Q}\right)\cdot 1\right]
(1-e^{-\alpha_m})  r_m(\tau^\circ_{Q}|_{J_0}) \\
&\phantom{=\sum_{|Q|\leq k, Q=r_mQ_0,
\atop \mbox{\tiny for some }Q_0}  \dcirc_{RS}^{r_mQ_0} (1-e^{-\alpha_m})  r_m(\tau^\circ_{Q_0}|_{J_0}) }
+ \sum_{|Q|\leq k-1}  \dcirc_{RS}^Q e^{-\alpha_m} r_m(\tau^\circ_Q|_{J_0}) + \dcirc_{RS}^{J_0} e^{-\alpha_m} r_m( \tau^\circ_{J_0}|_{J_0})\\
 &  =  e^{-\alpha_m}\left(\sum_{|Q|\leq k-1} \left[-(1-e^{-\alpha_m}) \Demopisobaric_m \left(\dcirc_{RS}^{Q} 
 \right)\cdot 1+  \dcirc_{RS}^Q   \right] r_m(\tau^\circ_Q|_{J_0})  +\dcirc_{RS}^{J_0} r_m( \tau^\circ_{J_0}|_{J_0})\right)\\
 &= e^{-\alpha_m}\left( \sum_{|Q|\leq k-1} \left[-(1-e^{-\alpha_m}) \frac{\left(\dcirc_{RS}^Q-e^{-\alpha_m}r_m(\dcirc_{RS}^Q)\right)}{1-e^{-\alpha_m}}
 +  \dcirc_{RS}^Q 
   \right]r_m(\tau^\circ_Q|_{J_0}) +\dcirc_{RS}^{J_0} r_m( \tau^\circ_{J_0}|_{J_0})\right)\\
 &= e^{-\alpha_m}\left( \sum_{|Q|\leq k-1} \ \left[\phantom{ [-(1-e^{-\alpha_m})(\dcirc_{RS}^Q-  } 
e^{-\alpha_m}r_m(\dcirc_{RS}^Q) \phantom{  +  +\dcirc_{RS}^Q  )  } \right]r_m(\tau^\circ_Q|_{J_0}) +\dcirc_{RS}^{J_0} r_m( \tau^\circ_{J_0}|_{J_0})\right),
 \end{align*} 
 }

}

We plug the first sum in Lemma \ref{lemma:simplification}
 into~\eqref{eq:inductivestep}, and use the
restrictions found in Table~\ref{table:listrestrictions} and
Lemma~\ref{lemma:simplification}. For the $\{\tau_Q\}$ structure
constants,
\begin{align*}
d_{RS}^J &= \frac{\tau_S|_J \tau_R|_J - \sum_{Q: \ Q\subsetneq J} d_{RS}^Q \tau_Q|_J}{\tau_J|_J}\\
& = \frac{r_\alpha  (\tau_S|_{J_0}\tau_R|_{J_0}) - r_\alpha  \left(\sum_{R\subsetneq J_0} d_{RS}^R \tau_R|_{J_0}\right) - d_{RS}^{J_0} r_\alpha \left(\tau_{J_0}|_{J_0}\right)}{(1-e^{-\alpha })r_\alpha  \left(\tau_{J_0}|_{J_0}\right)}\\
&= \frac{1}{1-e^{-\alpha }} r_\alpha \left(\frac{\tau_S|_{J_0}\tau_R|_{J_0} -\sum_{R\subsetneq J_0} d_{RS}^R \tau_R|_{J_0}}{\tau_{J_0}|_{J_0}}\right) - 
\frac{d_{RS}^{J_0}}{1-e^{-\alpha }}\\
&=  \frac{1}{1-e^{-\alpha }} r_\alpha  \left(d_{RS}^{J_0}\right) - \frac{d_{RS}^{J_0}}{1-e^{-\alpha }}\\
& = \Demop_\alpha  r_\alpha  \left(d_{RS}^{J_0}\right) \quad \mbox{(noting that $-\Demop_\alpha  =\Demop_\alpha  r_\alpha $),}
\end{align*}
as desired. 

Similarly for the $\{\tau_Q^\circ\}$ structure constants, we define $J_0$ such that $J = r_\alpha J_0$ and obtain:
\begin{align*}
\dcirc_{RS}^J &= \frac{\tau^\circ_S|_J \tau^\circ_R|_J - \sum_{Q: \ Q\subsetneq J} \dcirc_{RS}^Q \tau^\circ_Q|_J}{\tau^\circ_J|_J}\qquad\qquad\text{from (\ref{eq:Kinductivestep})} \\
&=  \frac{e^{-2\alpha } r_\alpha (\tau^\circ_{S}|_{J_0}\tau^\circ_{R}|_{J_0})-e^{-\alpha }\left( \sum_{Q\subsetneq J_0}  e^{-\alpha } r_\alpha (\dcirc_{RS}^Q \tau^\circ_Q|_{J_0}) + \dcirc_{RS}^{J_0} r_\alpha ( \tau^\circ_{J_0}|_{J_0})\right) }{ (1-e^{-\alpha }) r_\alpha  (\tau^\circ_{J_0}|_{J_0})}\\
& = e^{-\alpha } \cdot
\frac{e^{-\alpha } r_\alpha (\tau^\circ_{S}|_{J_0}\tau^\circ_{R}|_{J_0}) 
-  \sum_{Q\subsetneq J_0} e^{-\alpha }  r_\alpha (\dcirc_{RS}^Q \tau^\circ_Q|_{J_0}) 
- \dcirc_{RS}^{J_0} r_\alpha ( \tau^\circ_{J_0}|_{J_0}) }{ (1-e^{-\alpha }) r_\alpha  (\tau^\circ_{J_0}|_{J_0})}\\
&=e^{-\alpha } \cdot
\frac{e^{-\alpha } r_\alpha \left(\tau^\circ_{S}|_{J_0}\tau^\circ_{R}|_{J_0}
-  \sum_{Q\subsetneq J_0}  \dcirc_{RS}^Q \tau^\circ_Q|_{J_0}\right) 
- \dcirc_{RS}^{J_0} r_\alpha ( \tau^\circ_{J_0}|_{J_0}) }{ (1-e^{-\alpha }) r_\alpha  (\tau^\circ_{J_0}|_{J_0})}\\
& = e^{-\alpha } \cdot
\frac{e^{-\alpha } r_\alpha  (\dcirc_{RS}^{J_0}\tau^\circ_{J_0}|_{J_0}) 
- \dcirc_{RS}^{J_0} r_\alpha ( \tau^\circ_{J_0}|_{J_0}) }{ (1-e^{-\alpha }) r_\alpha  (\tau^\circ_{J_0}|_{J_0})}\\
&=  -e^{-\alpha } \Demopisobaric_\alpha  (\dcirc_{RS}^{J_0})
 \end{align*}
\junk{{\color{blue}
\begin{align*}
\dcirc_{RS}^J &= \frac{\tau^\circ_S|_J \tau^\circ_R|_J - \sum_{Q: \ |Q|\leq k, R,S\subset Q} \dcirc_{RS}^Q \tau^\circ_Q|_J}{\tau^\circ_J|_J}\\
&=  \frac{e^{-2\alpha_m} r_m(\tau^\circ_{S}|_{J_0}\tau^\circ_{R}|_{J_0})-e^{-\alpha_m}\left( \sum_{|Q|\leq k-1}  e^{-\alpha_m} r_m(\dcirc_{RS}^Q \tau^\circ_Q|_{J_0}) + \dcirc_{RS}^{J_0} r_m( \tau^\circ_{J_0}|_{J_0})\right) }{ (1-e^{-\alpha_m}) r_m (\tau^\circ_{J_0}|_{J_0})}\\
& = e^{-\alpha_m} \cdot
\frac{e^{-\alpha_m} r_m(\tau^\circ_{S}|_{J_0}\tau^\circ_{R}|_{J_0}) 
-  \sum_{|Q|\leq k-1} e^{-\alpha_m}  r_m(\dcirc_{RS}^Q \tau^\circ_Q|_{J_0}) 
- \dcirc_{RS}^{J_0} r_m( \tau^\circ_{J_0}|_{J_0}) }{ (1-e^{-\alpha_m}) r_m (\tau^\circ_{J_0}|_{J_0})}\\
&=e^{-\alpha_m} \cdot
\frac{e^{-\alpha_m} r_m\left(\tau^\circ_{S}|_{J_0}\tau^\circ_{R}|_{J_0}
-  \sum_{|Q|\leq k-1}  \dcirc_{RS}^Q \tau^\circ_Q|_{J_0}\right) 
- \dcirc_{RS}^{J_0} r_m( \tau^\circ_{J_0}|_{J_0}) }{ (1-e^{-\alpha_m}) r_m (\tau^\circ_{J_0}|_{J_0})}\\
& = e^{-\alpha_m} \cdot
\frac{e^{-\alpha_m} r_m (\dcirc_{RS}^{J_0}\tau_{J_0}|_{J_0}) 
- \dcirc_{RS}^{J_0} r_m( \tau^\circ_{J_0}|_{J_0}) }{ (1-e^{-\alpha_m}) r_m (\tau^\circ_{J_0}|_{J_0})}\\
&=  -e^{-\alpha_m} \Demopisobaric_m (\dcirc_{RS}^{J_0})
 \end{align*}
}}

\noindent {\bf \underline{Case 3:} $R, S, J$ all begin with the same letter.}

On the other hand, if $R$ and $S$ both begin with $r_\alpha $, let $R=r_\alpha R_0$ and $S=r_\alpha S_0$.

\begin{align*}
\sum_{Q:\ Q\subsetneq J} d_{RS}^Q \tau_Q|_J &= \sum_{Q:\ Q\subsetneq J} d_{RS}^Q (1-e^{-\alpha })r_\alpha (\tau_{Q_0}|_{J_0}) \\
&= \sum_{Q:\ Q\subsetneq J, \atop Q=r_\alpha Q_0} \left[(1-e^{-\alpha })r_\alpha  \left(d_{R_0S_0}^{Q_0}\right)\right]  \left[(1-e^{-\alpha })r_\alpha \left(\tau_{Q_0}|_{J_0}\right)\right],  \mbox{ by induction}\\
& = (1-e^{-\alpha })^2 r_\alpha \left(\sum_{Q:\ Q=r_\alpha Q_0, \atop Q_0\subsetneq J_0}  d_{R_0S_0}^{Q_0}\tau_{Q_0}|_{J_0}\right)\\
& = (1-e^{-\alpha })^2 r_\alpha \left(\sum_{R:\  R\subsetneq J_0}   d_{R_0S_0}^{R} \tau_{R}|_{J_0}\right).
\end{align*}
Similarly, since $r_\alpha $ is the first letter of both $R$ and $S$, $r_\alpha \in R\cup S \subset Q$. Thus, for each $Q$ containing $R$ and $S$, $Q = r_\alpha Q_0$ for some $Q_0$. It follows that
\begin{align*}
\sum_{Q:\ Q\subsetneq J} \dcirc_{RS}^Q \tau^\circ_Q|_J &=\sum_{Q:\ Q\subsetneq J} \dcirc_{RS}^Q (1-e^{-\alpha })r_\alpha (\tau^\circ_{Q_0}|_{J_0}) \\
& = \sum_{Q:\ Q\subset J} \left[(1-e^{-\alpha })r_\alpha (\dcirc_{R_0S_0}^{Q_0}) \right]
(1-e^{-\alpha })r_\alpha (\tau^\circ_{Q_0}|_{J_0}), \mbox{ by induction} \\
& = (1-e^{-\alpha })^2 r_\alpha  \left(\sum_{R:\  R\subsetneq J_0}   \dcirc_{R_0S_0}^{R} \tau^\circ_{R}|_{J_0}\right).
\end{align*}
We plug these expressions from Lemma~\ref{lemma:simplification} into
Equations~\eqref{eq:inductivestep} and \eqref{eq:Kinductivestep}
when $r_\alpha $ is the first letter of all three words $J, U,$ and $R$, and use the restrictions in
Table~\ref{table:listrestrictions} to obtain
\begin{align*}
d_{RS}^J &= \frac{(1-e^{-\alpha })^2 r_\alpha (\tau_{S_0}|_{J_0}\tau_{R_0}|_{J_0})-(1-e^{-\alpha })^2 r_\alpha \left(\sum_{R\subsetneq J_0}  d_{R_0S_0}^{R} \tau_R|_{J_0}\right)}{ (1-e^{-\alpha }) r_\alpha  \tau_{J_0}|_{J_0}}\\
&= (1-e^{-\alpha }) r_\alpha  \left(\frac{\tau_{S_0}|_{J_0}\tau_{R_0}|_{J_0}-\left(\sum_{R\subsetneq J_0  } d_{R_0S_0}^{R} \tau_R|_{J_0}\right)}{\tau_{J_0}|_{J_0}}\right)\\
&= (1-e^{-\alpha })r_\alpha  \left(d_{R_0S_0}^{J_0}\right)
\end{align*}
where the last step follows again by the inductive hypothesis. Similarly, using the restrictions table for $\tau^\circ_S$ and $\tau^\circ_R$ restricted to $J$, 
 \begin{align*}
 \dcirc_{RS}^J &=  \frac{((1-e^{-\alpha })^2 r_\alpha (\tau^\circ_{S_0}|_{J_0}\tau^\circ_{R_0}|_{J_0})-(1-e^{-\alpha })^2 r_\alpha \left(\sum_{R\subsetneq J_0}  \dcirc_{R_0S_0}^{R} \tau_R|_{J_0}\right)}{ (1-e^{-\alpha }) r_\alpha  (\tau^\circ_{J_0}|_{J_0})}\\
 &=(1-e^{-\alpha })r_\alpha  \left( \frac{\tau^\circ_{S_0}|_{J_0}\tau^\circ_{R_0}|_{J_0}-\sum_{R\subsetneq J_0}  \dcirc_{R_0S_0}^{R} \tau_R|_{J_0}}{\tau^\circ_{J_0}|_{J_0}}\right)\\
 &= (1-e^{-\alpha })r_\alpha  \left(\dcirc_{R_0S_0}^{J_0}\right).
 \end{align*}


\noindent {\bf \underline{Case 4:} Exactly one of $R$ or $S$ begin with the same letter as $J$.}

Finally, we consider the case that $R=r_\alpha R_0$ for some $R_0$, while $S$ does not begin with $r_\alpha $ (or, symmetrically, if $S$ begins with $r_\alpha $ but $R$ does not). Recall that $\tau_Q|_J\neq 0$ implies $Q\subset J$, so $R\subset Q$ implies $Q=r_\alpha Q_0$ for some $Q_0$. Thus by the inductive assumption and Table~\ref{table:listrestrictions},
\begin{align*}
\sum_{Q:\ Q\subsetneq J}
d_{RS}^Q \tau_Q|_J &=  \sum_{Q:\ Q\subsetneq J\atop Q=r_\alpha Q_0} 
\left[r_\alpha  \left(d_{R_0S}^{Q_0}\right) \right]
\left[(1-e^{-\alpha })r_\alpha \left(\tau_{Q_0}|_{J_0}\right) \right],\quad\mbox{and}
\\
\sum_{Q:\ Q\subsetneq J}
 \dcirc_{RS}^Q \tau^\circ_Q|_J &=  \sum_{Q:\ Q\subsetneq J\atop Q=r_\alpha Q_0} 
 \left[e^{-\alpha}\, r_\alpha  \left(\dcirc_{R_0S}^{Q_0}\right)\right] \left[(1-e^{-\alpha })r_\alpha (\tau^\circ_{Q_0}|_{J_0})\right]
\end{align*}

Using these equalities together with the restrictions in Equations \eqref{eq:inductivestep} and \eqref{eq:Kinductivestep},
\begin{align*}
d_{RS}^J &= \frac{(1-e^{-\alpha }) r_\alpha (\tau_{S_0}|_{J_0}\tau_{R_0}|_{J_0})-(1-e^{-\alpha })r_\alpha \left(\sum_{R\subsetneq J_0}  d_{R_0S_0}^R \tau_R|_{J_0}\right)}{ (1-e^{-\alpha }) r_\alpha  \tau_{J_0}|_{J_0}}\\
&=r_\alpha  \left(\frac{\tau_{S_0}|_{J_0}\tau_{R_0}|_{J_0}-\left(\sum_{R\subsetneq J_0} d_{R_0S_0}^R\tau_R|_{J_0}\right)}{\tau_{J_0}|_{J_0}}\right)\\
&= r_\alpha  \left(d_{R_0S_0}^{J_0}\right).
\end{align*}
and
\begin{align*}
\dcirc_{RS}^J &=  \frac{e^{-\alpha } (1-e^{-\alpha }) r_\alpha (\tau^\circ_{S_0}|_{J_0}\tau^\circ_{R_0}|_{J_0})-e^{-\alpha } (1-e^{-\alpha })r_\alpha \left(\sum_{R\subsetneq J_0}  \dcirc_{R_0S_0}^R \tau^\circ_R|_{J_0}\right)}{  (1-e^{-\alpha })r_\alpha (\tau^\circ_{J_0}|_{J_0})}\\
&=e^{-\alpha } r_\alpha  \left(\frac{\tau^\circ_{S_0}|_{J_0}\tau^\circ_{R_0}|_{J_0}-\left(\sum_{R\subsetneq J_0} \dcirc_{R_0S_0}^R\tau^\circ_R|_{J_0}\right)}{\tau^\circ_{J_0}|_{J_0}}\right)\\
&= e^{-\alpha} r_\alpha  \left( \dcirc_{R_0S_0}^{J_0}\right).
\end{align*}
This completes the proof in all cases.
\end{proof}


\end{document}